\newtheorem{lma}{Lemma}[section]
\newaliascnt{thmCt}{lma}
\newtheorem{thm}[thmCt]{Theorem}
\newaliascnt{corCt}{lma}
\newtheorem{cor}[corCt]{Corollary}
\newaliascnt{prpCt}{lma}
\newtheorem{prp}[prpCt]{Proposition}
\newtheorem*{thm*}{Theorem}
\newtheorem*{cor*}{Corollary}
\newtheorem*{prop*}{Proposition}
\theoremstyle{definition}
\newaliascnt{pgrCt}{lma}
\newtheorem{pgr}[pgrCt]{}
\newaliascnt{dfnCt}{lma}
\newtheorem{dfn}[dfnCt]{Definition}
\newaliascnt{rmkCt}{lma}
\newtheorem{rmk}[rmkCt]{Remark}
\newaliascnt{rmksCt}{lma}
\newaliascnt{exaCt}{lma}
\newtheorem{exa}[exaCt]{Example}
\newaliascnt{qstCt}{lma}
\newtheorem{qst}[qstCt]{Question}
\newaliascnt{ntnCt}{lma}
\newtheorem{ntn}[ntnCt]{Notation}
\def\today{\number\day\space\ifcase\month\or   January\or February\or
   March\or April\or May\or June\or   July\or August\or September\or
   October\or November\or December\fi\   \number\year}
\newcommand{\andSep}{\,\,\,\text{ and }\,\,\,}
\newcommand{\axiomO}[1]{(O#1)}
\newcommand{\CatCu}{\ensuremath{\mathrm{Cu}}}
\newcommand{\CuSgp}{$\CatCu$-sem\-i\-group}
\newcommand{\CuMor}{$\CatCu$-mor\-phism}
\newcommand{\soft}{{\rm{soft}}}
\newcommand{\NN}{{\mathbb{N}}}
\newcommand{\KK}{{\mathbb{K}}}
\newcommand{\id}{{\operatorname{id}}}
\newcommand{\Aut}{{\mathrm{Aut}}}
\newcommand{\NNbar}{\overline{\NN}}
\newcommand{\ca}{$\mathrm{C}^*$-algebra}
\DeclareMathOperator{\QT}{QT}
\DeclareMathOperator{\Cu}{Cu}
\DeclareMathOperator{\Lsc}{Lsc}
\DeclareMathOperator{\LAff}{LAff}
\DeclareMathOperator{\rc}{rc}
\newcounter{theoremintro}
\newaliascnt{thmIntroCt}{theoremintro}
\newtheorem{thmIntro}[thmIntroCt]{Theorem}
\title{Ranks of soft operators in nowhere scattered $\mathrm{C}^*$-algebras}
\date{\today}
\author{M. Ali Asadi-Vasfi}
\address{M. Ali Asadi-Vasfi, 
Department of Mathematics, University of Toronto, Toronto, Ontario, Canada.
}
\urladdr{https://sites.google.com/view/maliasadivasfi/home?authuser=0}
\author{Hannes Thiel}
\address{Hannes~Thiel, 
Department of Mathematical Sciences, Chalmers University of Technology and University of
Gothenburg, Gothenburg SE-412 96, Sweden.}
\email{hannes.thiel@chalmers.se}
\urladdr{www.hannesthiel.org}
\author{Eduard Vilalta}
\address{Eduard Vilalta,
The Fields Institute for Research in Mathematical Sciences,
Toronto, Ontario M5T 3J1, Canada.}
\email{eduardvilaltavila@gmail.com}
\urladdr{www.eduardvilalta.com}
\thanks{
The second named author was partially supported by the Deutsche Forschungsgemeinschaft (DFG, German Research Foundation) under Germany's Excellence Strategy EXC 2044-390685587 (Mathematics M\"{u}nster: Dynamics-Geometry-Structure), by the ERC Consolidator Grant No.~681207, and by the Knut and Alice Wallenberg Foundation (KAW 2021.0140).
The third named author was partially supported by MINECO (grant No.\ PRE2018-083419 and No.\ PID2020-113047GB-I00), by the Comissionat per Universitats i Recerca de la Generalitat de Catalunya (grant No.\ 2021SGR01015), and by the Fields Institute for Research in Mathematical Sciences.
}
\keywords{C*-algebras, rank of operator, Cuntz semigroup, quasitrace, dimension function, scatteredness, the Global Glimm property, soft operators}
\subjclass[2010]%
{Primary
46L05; 
Secondary
06B35, 
06F05, 
19K14, 
46L35, 
46L80, 
46L85. 
}
\date{\today}
\begin{document}

\begin{abstract}
We show that for \ca{s} with the  Global Glimm Property, the rank of every operator can be realized as the rank of a soft operator, that is, an element whose hereditary sub-\ca{} has no nonzero, unital quotients.
This implies that the radius of comparison of such a \ca{} is determined by the soft part of its Cuntz semigroup.

Under a mild additional assumption, we show that every Cuntz class dominates a (unique) largest soft Cuntz class.
This defines a retract from the Cuntz semigroup onto its soft part, and it follows that the covering dimensions of these semigroups differ by at most $1$.
\end{abstract}

\maketitle

\section{Introduction}

Realizing every strictly positive, lower-semicontinuous, affine function on the tracial state space of a separable, simple, nuclear, non-elementary \ca{} as the rank of an operator in its stabilization is a deep and open problem, first studied in \cite{DadTom10Ranks}. 
A positive solution to this problem would imply that every separable, simple, non-elementary \ca{} of locally finite nuclear dimension and strict comparison of positive elements is $\mathcal{Z}$-stable, thus proving the remaining implication of the prominent Toms-Winter conjecture (\cite[Section~5]{Win18ICM}) in this case; 
see, for example, \cite[Section~9]{Thi20RksOps} and the discussion in \cite[Section~5]{CasEviTikWhi22UniPropGamma}.

When the \ca{} $A$ is not simple, the problem is still of much interest, but one needs to replace the tracial state space by the cone $\QT(A)$ of lower-semicontinuous, extended-valued $2$-quasitraces on~$A$.
Each such quasitrace extends canonically to the stabilization $A\otimes\KK$, and the \emph{rank} of an operator $a \in (A\otimes\KK)_+$ is defined as the map $\widehat{[a]}\colon\QT(A)\to[0,\infty]$ given by
\[
\widehat{[a]}(\tau) := d_\tau(a) := \lim_{n\to\infty}\tau(a^{1/n})
\]
for $\tau \in \QT(A)$.
The \emph{rank problem} is then to determine which functions on~$\QT(A)$ arise as the rank of a positive operator in $A$ or $A\otimes\KK$.

A natural obstruction arises if $A$ has a nonzero elementary ideal-quotient, that is, if there are closed ideals $I \subseteq J \subseteq A$ such that $J/I$ is $\ast$-isomorphic to $\KK(H)$ for some Hilbert space $H$.
In this case, the natural trace on $\KK(H)$ induces a quasitrace $\tau \in \QT(A)$ that is discrete in the sense that $d_\tau(a) \in \{0,1,2,\ldots,\infty\}$ for every $a \in (A\otimes\KK)_+$.
A similar obstruction arises in the representation of interpolation groups by continuous, affine functions on their state space;
see \cite[Chapter~8]{Goo86GpsInterpolation}.

To avoid this obstruction, it is therefore natural to assume that $A$ has no nonzero elementary ideal-quotients, a condition termed \emph{nowhere scatteredness} in \cite{ThiVil21arX:NowhereScattered}. 
Building on the results from \cite{Thi20RksOps}, the rank problem was solved in \cite{AntPerRobThi22CuntzSR1} for nowhere scattered \ca{s} that have stable rank one: 
Every function on $\QT(A)$ that satisfies the `obvious' conditions arises as the rank of an operator in $A\otimes\KK$;
see \cite[Theorem~7.13]{AntPerRobThi22CuntzSR1} for the precise statement.
Moreover, one can arrange for the operator to be \emph{soft}, which means that it generates a hereditary sub-\ca{} that has no nonzero unital quotients; 
see \cite[Definition~3.1]{ThiVil23arX:Soft}.

As a consequence, in a nowhere scattered, stable rank one \ca{}, the rank of every operator can be realized as the rank of a \emph{soft} operator. 

\medskip

The aim of this paper is to study this phenomenon in greater generality and, more concretely, to investigate when the rank of every operator in a \ca{} $A$ can be realized as the rank of a soft element. 
We show that this holds whenever~$A$ satisfies the \emph{Global Glimm Property} --- a notion conjectured to be equivalent to nowhere scatteredness; see \autoref{pgr:GGPNSCa}. Namely, we prove:

\begin{thmIntro}[\ref{prp:SoftRankCuGGP}]
\label{thmInt:SoftRanks}
Let $A$ be a stable \ca{} with the Global Glimm Property. 
Then, for any $a\in A_+$ there exists a soft element $b \in A_+$ with $b \precsim a$ and such that 
\[
d_\tau(a) = d_\tau(b)
\]
for every $\tau \in \QT(A)$.
\end{thmIntro}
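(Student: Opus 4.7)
\emph{Proof proposal.} The plan is to approximate $a$ from below by its compactly contained approximants $(a-1/n)_+$, to softly enlarge each of them while staying under $a$, and then to assemble the resulting pieces into a single soft element realizing the same rank as $a$.

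Set $a_n := (a-1/n)_+$, so that $[a_n] \ll [a_{n+1}]$ in $\Cu(A)$ and $[a] = \sup_n [a_n]$. Lower semicontinuity of the rank function gives $d_\tau(a) = \sup_n d_\tau(a_n)$ for every $\tau \in \QT(A)$. It therefore suffices to produce, for each $n$, a soft element $b_n \in A_+$ with $a_n \precsim b_n \precsim a$ such that $([b_n])_n$ is an increasing sequence, and then to take a supremum inside the stable algebra $A$.

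The core step, and where I expect the main difficulty, is a soft interpolation at the Cu-level: given $u \ll v$ in $\Cu(A)$ under the Global Glimm Property, produce a soft class $s \in \Cu(A)_\soft$ with $u \leq s \leq v$. The GGP supplies, inside any hereditary subalgebra of $A$, arbitrarily large collections of pairwise orthogonal mutually Cuntz-equivalent positive subelements that collectively dominate prescribed small classes. I would apply this inside $\overline{a_{n+1} A a_{n+1}}$ to build a positive element $b_n$ whose hereditary subalgebra $\overline{b_n A b_n}$ inherits enough internal divisibility to preclude any nonzero unital quotient, while still dominating $a_n$ in the Cuntz preorder. The softness of $[b_n]$ would be verified via the usual infinite-divisibility characterization of soft Cuntz classes, which is available precisely because the GGP forbids any local elementary behaviour. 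A delicate subpoint is choosing the $b_n$ coherently so that $[b_n] \leq [b_{n+1}]$ holds for all $n$; this is arranged by a diagonal construction performed within one fixed hereditary subalgebra of $A$.

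With $(b_n)$ in hand, stability of $A$ lets me realize $\sup_n [b_n]$ as $[b]$ for some $b \in A_+$, automatically satisfying $b \precsim a$. Since the soft part $\Cu(A)_\soft$ is closed under suprema of increasing sequences, $[b]$ is soft, and hence so is $b$. Finally,
\[
d_\tau(b) \;=\; \sup_n d_\tau(b_n) \;\geq\; \sup_n d_\tau(a_n) \;=\; d_\tau(a),
\]
while $b \precsim a$ yields the reverse inequality, giving $d_\tau(a) = d_\tau(b)$ for every $\tau \in \QT(A)$. The entire argument thus hinges on the soft interpolation step; the assembly and rank comparison are routine manipulations of suprema and the Cuntz preorder.
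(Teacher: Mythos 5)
Your proposal fails at the step you yourself identify as the core: the ``soft interpolation'' claim --- given $u \ll v$ in $\Cu(A)$, find a strongly soft $s$ with $u \leq s \leq v$ --- is false, even under the Global Glimm Property. Take $A = \mathcal{Z}\otimes\KK$, so that $\Cu(A) \cong Z = \NN \sqcup (0,\infty]$, where $(0,\infty]$ is exactly the set of nonzero strongly soft elements. A compact class $n \in \NN$ lies below a soft class $x$ only if $n < x$, while $x \leq n$ forces $x \leq n$; hence no soft class sits between $n$ and $n$, although $n \ll n$. This kills your reduction outright: if $a = p$ is a projection, then $a_n = (p - 1/n)_+ = (1-\tfrac{1}{n})p \sim p$ for $n \geq 2$, so your requirement $a_n \precsim b_n \precsim a$ with $b_n$ soft would force a nonzero class that is simultaneously compact and strongly soft, which does not exist here (or in any stably finite setting). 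Consequently your final display $d_\tau(b) \geq \sup_n d_\tau(a_n)$ has no basis --- the chain $a_n \precsim b_n$ can never be arranged --- and the whole architecture of approximating $a$ from below by cut-downs that are then softly dominated must be abandoned, not merely refined.

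The theorem is nonetheless true because rank equality does not require dominating any cut-down of $a$: the soft element realizes the rank of $a$ strictly from below. This is what the paper's proof does. It shows (\autoref{prp:SoftRankS}) that the supremum of the strongly soft classes way-below $x = [a]$ already has full rank: the relevant set $L_x$ is directed only up to multiplicative factors $\tfrac{k}{k+1}$ in rank (\autoref{prp:PreLxDirected}, \autoref{prp:LxDirected}), and letting these factors tend to $1$ along a diagonal sequence produces $w \in \Cu(A)_\soft$ with $w \leq x \leq \infty w$ and $\lambda(w) = \sup_{v' \in L_x}\lambda(v')$ (\autoref{prp:FuncSupSoft}). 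The reverse rank inequality is then extracted from the domination inequality $x' + w' \leq v' + w$ of \autoref{prp:ImprovSigmaDomination}, which gives $\lambda(x) + \lambda(w) \leq 2\lambda(w)$ and hence $\lambda(x) \leq \lambda(w)$ by cancellation when $\lambda(w) < \infty$. Note also that your closing inference ``$[b]$ is soft, and hence so is $b$'' silently assumes that a strongly soft Cuntz class admits a soft representative; this is itself a theorem (\autoref{prp:SoftStSoftEq}, via \autoref{prp:CharSoftCuClassGGP}), requiring abundance of soft elements and weak stable rank one --- both available here, but needing proof, and it is precisely the step where stability of $A$ is used.
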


In \autoref{thmInt:SoftRanks} above we use $\precsim$ to denote the \emph{Cuntz subequivalence}, a relation between positive elements introduced by Cuntz in \cite{Cun78DimFct}. 
This relation allows one to define the \emph{Cuntz semigroup}, an object that has played an important role in the structure and classification theory of \ca{s}; 
see \autoref{pgr:CuntzSgp} and \cite{CowEllIva08CuInv, Tom08ClassificationNuclear, Win12NuclDimZstable, Thi20RksOps, AntPerRobThi22CuntzSR1}. 
As explained in \autoref{pgr:CuSgps}, the study of the Cuntz semigroup has often come in hand with the development of abstract Cuntz semigroups, also known as \emph{\CuSgp{s}}; 
see \cite{Rob13Cone, AntPerThi20CuntzUltraproducts, AntPerThi20AbsBivariantCu, AntPerThi20AbsBivarII, Vil22LocCharAI, CanVil23arX:FraisseCu} among many others.

If an operator $a$ is soft, then its Cuntz class $[a]$ is strongly soft (we recall the definition at the beginning of \autoref{sec:Soft}).
If $A$ has the Global Glimm Property, then every strongly soft Cuntz class arises this way, and it follows that the submonoid $\Cu(A)_\soft$ of strongly soft Cuntz classes agrees with the subset of Cuntz classes with a soft representative;
see \autoref{prp:CharSoftCuClassGGP}.

The cone $\QT(A)$ is naturally isomorphic to the cone $F(\Cu(A))$ of functionals on the Cuntz semigroup $\Cu(A)$;
see \cite[Theorem~4.4]{EllRobSan11Cone}.
As an application of \autoref{thmInt:SoftRanks}, we show that the same is true for the cone of functionals on $\Cu(A)_\soft$.

\begin{thmIntro}[\ref{prp:QTAEqFCuASof}]
Let $A$ be a \ca{} with the Global Glimm Property. 
Then, $\QT(A)$ is naturally isomorphic to $F(\Cu (A)_\soft )$.
\end{thmIntro}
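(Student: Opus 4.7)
The plan is to combine \autoref{thmInt:SoftRanks} with the Elliott--Robert--Santiago identification $\QT(A)\cong F(\Cu(A))$ from \cite[Theorem~4.4]{EllRobSan11Cone}. Concretely, I would show that the restriction map
\[
\rho\colon F(\Cu(A)) \longrightarrow F(\Cu(A)_\soft),\qquad \lambda\longmapsto \lambda|_{\Cu(A)_\soft},
\]
induced by the inclusion of the sub-$\CatCu$-semigroup of (strongly) soft classes, is a cone isomorphism; composing with ERS then produces the desired identification $\QT(A)\cong F(\Cu(A)_\soft)$. That $\rho$ preserves the cone structure is immediate, so the content lies in establishing bijectivity.

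Injectivity of $\rho$ is a direct application of \autoref{thmInt:SoftRanks}. Given $[a]\in\Cu(A)$, this theorem (applied in the stabilisation if necessary) produces a soft $b\precsim a$ with $d_\tau(b)=d_\tau(a)$ for every $\tau\in\QT(A)$; translated through ERS, this says $\lambda([b])=\lambda([a])$ for every $\lambda\in F(\Cu(A))$, so any two functionals that agree on the soft subsemigroup must satisfy $\lambda_1([a])=\lambda_1([b])=\lambda_2([b])=\lambda_2([a])$.

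For surjectivity, given $\mu\in F(\Cu(A)_\soft)$, I would define an extension $\widetilde\mu\colon\Cu(A)\to[0,\infty]$ by
\[
\widetilde\mu([a]) \;:=\; \sup\bigl\{\mu(s) : s\in\Cu(A)_\soft,\ s\ll[a]\bigr\}.
\]
Order-preservation is immediate; sup-continuity follows from the standard fact that $s\ll\sup_n[a_n]$ forces $s\ll[a_n]$ for some $n$; and the identity $\widetilde\mu|_{\Cu(A)_\soft}=\mu$ uses that every strongly soft class is the supremum of a $\ll$-increasing sequence of strongly soft classes. The inequality $\widetilde\mu([a]+[a'])\ge\widetilde\mu([a])+\widetilde\mu([a'])$ is also straightforward, since the sum of two soft classes is again soft, so pairs $s\ll[a]$, $s'\ll[a']$ with $s,s'\in\Cu(A)_\soft$ give soft $s+s'\ll[a]+[a']$.

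The main obstacle will be the reverse additivity $\widetilde\mu([a]+[a'])\le\widetilde\mu([a])+\widetilde\mu([a'])$. Given a soft $t$ with $t\ll[a]+[a']$, the natural route is Riesz refinement in $\Cu(A)$ (available via axiom \axiomO{6}), which decomposes $t=t_1+t_2$ with $t_1\le[a]$ and $t_2\le[a']$; one then invokes \autoref{thmInt:SoftRanks} to replace $t_1,t_2$ by soft classes $[b]\le[a]$, $[b']\le[a']$ realising the full ranks $d_\tau(a)$ and $d_\tau(a')$. The delicate point is passing from the rank inequality $d_\tau(t)\le d_\tau(b+b')$ to an actual order inequality $t\le[b+b']$ in $\Cu(A)_\soft$, which amounts to a strict-comparison statement for strongly soft classes. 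I expect this to be where the Global Glimm Property enters most decisively, through the characterisation in \autoref{prp:CharSoftCuClassGGP} and the rank-realisation techniques underpinning \autoref{thmInt:SoftRanks}; once it is in hand, $\widetilde\mu$ is a $\CatCu$-functional with $\rho(\widetilde\mu)=\mu$, completing the isomorphism.
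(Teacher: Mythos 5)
Your overall architecture is the paper's: identify $\QT(A)\cong F(\Cu(A))$ via \cite[Theorem~4.4]{EllRobSan11Cone}, and show that restriction along the inclusion $\Cu(A)_\soft\subseteq\Cu(A)$ is bijective, with injectivity coming from rank realisation and surjectivity from the soft-supremum extension $\widetilde{\mu}$ (this is exactly \autoref{prp:SoftifiedLambda} and \autoref{prp:FScongFSSoft}, where the extension is $\lambda_\soft(x)=\sup\{\lambda(v): v\in S_\soft,\ v\leq x\}$). Your injectivity argument and the easy properties of $\widetilde{\mu}$ are essentially correct (one small slip: $s\ll\sup_n[a_n]$ yields $s\leq[a_m]$, not $s\ll[a_m]$; this is patched by writing $s$ as the supremum of a $\ll$-increasing sequence of soft elements, using that $\Cu(A)_\soft$ is a sub-\CuSgp{}, cf.\ \autoref{prp:EqSupSoft}). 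The genuine gap is at the step you yourself flag as delicate: you propose to close subadditivity by passing from the rank inequality $\widehat{t}\leq\widehat{[b]+[b']}$ to the order inequality $t\leq[b]+[b']$, i.e., by a strict-comparison principle for strongly soft classes. No such principle follows from the Global Glimm Property, and it is false in general: comparison by ranks among functionally soft elements requires almost unperforation (\cite[Theorem~5.3.12]{AntPerThi18TensorProdCu}; cf.\ \autoref{prp:PerfCond}), and the paper's own \autoref{prp:SsoftRcCa} gives $\rc(\Cu(A)_\soft,\sigma([1]))=\rc(\Cu(A),[1])$, which is strictly positive for Villadsen-type simple, unital, stable rank one algebras --- simple non-elementary \ca{s} have the Global Glimm Property, so these fall under the theorem while their soft parts fail exactly the comparison you need. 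There is also a circularity lurking in your formulation: the ranks $\widehat{\,\cdot\,}$ are computed against $F(\Cu(A))$, whereas the functional $\mu$ to be controlled lives in $F(\Cu(A)_\soft)$, and the link between these two cones is precisely what is being proved.

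The paper avoids comparison entirely, replacing it by an absorption-and-cancellation scheme valid at the level of the single given functional. Roughly: for soft $w'$ way-below $x+y$, choose soft $s,t$ with $s\leq x''\leq\infty s$ and $t\leq y''\leq\infty t$ via $(2,\omega)$-divisibility (\cite[Proposition~7.7]{ThiVil23arX:Soft}); the domination lemma \autoref{prp:ImprovSigmaDomination} (built from \axiomO{5}, \axiomO{6} and \cite[Proposition~4.13]{ThiVil23arX:Soft}) produces $u'\in L_x$, $v'\in L_y$ with $x'+s'\leq u'+s$ and $y'+t'\leq v'+t$; combining with \autoref{prp:SoftRankS} one obtains soft $u,v$ with $u\leq x\leq\infty u$, $v\leq y\leq\infty v$ and
\[
w'+s'+t'\;\leq\; u+s+v+t .
\]
Applying $\mu$, taking suprema over $s'\ll s$ and $t'\ll t$, and then cancelling $\mu(s+t)$ --- legitimate because $s+t\leq\infty(u+v)$ renders the infinite case harmless --- yields $\mu(w')\leq\widetilde{\mu}(x)+\widetilde{\mu}(y)$ directly, with no order inequality between soft classes ever invoked. (The paper also dispenses with countable bases in \autoref{prp:SoftRankS} by a L\"{o}wenheim--Skolem reduction.) To repair your proof you would need to substitute this cancellation argument for the comparison step; as written, the surjectivity half does not go through.
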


We introduce in \autoref{sec:SepCu} a weak notion of cancellation for Cuntz semigroups, which we term \emph{left-soft separativity}; see \autoref{dfn:LefSofSep}. Whenever a \ca{} with the Global Glimm Property has a left-soft separative Cuntz semigroup, the relation between arbitrary and soft elements from \autoref{thmInt:SoftRanks} can be made more precise:

\begin{thmIntro}[\ref{prp:sigmaSoft}, \ref{prp:SsoftGenCuMor}]\label{thmInt:RetRanks}
Let $A$ be a \ca{} with the Global Glimm Property. 
Assume that $\Cu (A)$ is left-soft separative.
Then;
\begin{itemize}
\item[(i)] 
For every element $x\in \Cu (A)$ there exists a greatest element in $\Cu (A)_\soft$ below $x$, denoted by $\sigma (x)$.
\item[(ii)] 
We have $\lambda (\sigma (x))=\lambda (x)$ for every $x \in \Cu(A)$ and $\lambda \in F( \Cu (A))$.
\item[(iii)]
The map $\sigma\colon \Cu (A)\to \Cu (A)_\soft$, defined by $x\mapsto \sigma (x)$, preserves order, suprema of increasing sequences, and is superadditive.
\end{itemize}
\end{thmIntro}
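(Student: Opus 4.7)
The plan is to establish part (i) first; part (ii) will then fall out of the construction, and (iii) will reduce to standard stability properties of $\Cu(A)_\soft$ together with one final appeal to left-soft separativity.

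For (i), given $x = [a] \in \Cu(A)$, I would apply \autoref{thmInt:SoftRanks} to produce a soft element $b \in A_+$ with $b \precsim a$ and $d_\tau(b) = d_\tau(a)$ for every $\tau \in \QT(A)$, and simply \emph{define} $\sigma(x) := [b]$. Using the identification $\QT(A) \cong F(\Cu(A))$ from \cite{EllRobSan11Cone}, this immediately yields $\sigma(x) \in \Cu(A)_\soft$ with $\sigma(x) \leq x$ and $\lambda(\sigma(x)) = \lambda(x)$ for every $\lambda \in F(\Cu(A))$, which settles (ii) as soon as the definition is known to be independent of the choice of $b$.

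The heart of the argument, and what I expect to be the main obstacle, is showing that $\sigma(x)$ is the \emph{greatest} soft class below $x$; this simultaneously gives the well-definedness of $\sigma$. Given any soft $z \in \Cu(A)_\soft$ with $z \leq x$, one has $\lambda(z) \leq \lambda(x) = \lambda(\sigma(x))$ for every functional, and the natural purpose of left-soft separativity (introduced in \autoref{sec:SepCu}) is precisely to upgrade such a functional-level comparison between soft classes into the order inequality $z \leq \sigma(x)$. The delicate point will almost certainly be that one cannot compare $z$ and $\sigma(x)$ directly, but rather has to work one $\ll$-predecessor of $z$ at a time and invoke the cancellation-type hypothesis at that finer level; this is where the technical content of left-soft separativity should be felt, and it is the step I would flesh out most carefully.

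Once (i) and (ii) are secured, part (iii) follows by soft bookkeeping. Order preservation is immediate: if $x \leq y$, then $\sigma(x)$ is a soft class below $y$, hence $\sigma(x) \leq \sigma(y)$ by the maximality proved in (i). Superadditivity uses that $\Cu(A)_\soft$ is closed under addition, so $\sigma(x) + \sigma(y)$ is a soft class below $x+y$ and is therefore dominated by $\sigma(x+y)$. Finally, for an increasing sequence $x_n \nearrow x$, the inequality $\sup_n \sigma(x_n) \leq \sigma(x)$ is automatic from order preservation. For the reverse, I would observe that $\sup_n \sigma(x_n)$ is itself a soft class (since strongly soft classes are closed under suprema of increasing sequences), lies below $x$, and by continuity of functionals combined with (ii) satisfies $\lambda(\sup_n \sigma(x_n)) = \sup_n \lambda(x_n) = \lambda(x) = \lambda(\sigma(x))$ for every $\lambda \in F(\Cu(A))$; a last application of left-soft separativity, comparing the two soft classes $\sup_n \sigma(x_n)$ and $\sigma(x)$ that agree on all functionals and both lie below $x$, forces them to be equal.
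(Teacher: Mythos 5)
There is a genuine gap at the step you yourself flag as the heart of the argument: left-soft separativity does \emph{not} upgrade the functional inequality $\lambda(z)\leq\lambda(\sigma(x))$ for all $\lambda\in F(\Cu(A))$ to the order inequality $z\leq\sigma(x)$, not even for strongly soft $z$, and not even working one $\ll$-predecessor of $z$ at a time. Left-soft separativity is a cancellation axiom ($x+t\ll y+t$ plus side conditions implies $x\ll y$); the implication you need is a comparison axiom, essentially almost unperforation of the soft part (cf.\ \cite[Theorem~5.3.12]{AntPerThi18TensorProdCu} and \autoref{prp:PerfCond}), which is strictly stronger. Concretely: a unital, separable, stable rank one \ca{} with the Global Glimm Property can have strictly positive radius of comparison (Villadsen-type algebras); its Cuntz semigroup is weakly cancellative, hence left-soft separative by \autoref{prp:WCimpLSSep}, yet there exist soft classes $z,w$ with $\widehat{z}\leq\widehat{w}$ and $z\not\leq w$ --- indeed, if your implication held, \autoref{prp:SsoftRcCa} would force $\rc(\Cu(A),[1])=0$ for every such algebra, which is false. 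The same flaw recurs at the end of your part (iii), where you compare $\sup_n\sigma(x_n)$ and $\sigma(x)$ through functionals. As a consequence, your candidate $\sigma(x):=[b]$ from \autoref{thmInt:SoftRanks} is neither well-defined (different soft $b$'s with the same rank need not be Cuntz equivalent) nor maximal: $[b]$ has the correct rank but may sit strictly below the greatest soft element under $x$.

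The paper avoids functionals entirely in part (i): it sets $\sigma(x):=\sup L_x$ where $L_x=\{u' : u'\ll u\ll x \text{ for some } u\in S_\soft\}$, and left-soft separativity enters only through the last clause of \autoref{prp:PreLxDirected}, where the cancellation (via \autoref{rmk:LSSepEqu} applied to the inequality \eqref{eq:Separative}) is used to make $L_x$ \emph{upward directed}, so that the supremum exists and is strongly soft; maximality then follows cheaply because any soft $w\leq x$ is the supremum of a $\ll$-increasing sequence in $S_\soft$, each term of which lies in $L_x$. Part (ii) is then deduced in \autoref{prp:SepSoftRank} from \autoref{prp:SoftRankS} (existence of \emph{some} soft $w\leq x$ with $\widehat{w}=\widehat{x}$, which is the correct use of Theorem~\ref{thmInt:SoftRanks}) together with maximality, and part (iii) from the $L_x$-description --- note in particular that superadditivity in the paper's route needs only maximality, exactly as in your sketch, so that portion of your argument is fine once (i) is established correctly. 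If you strengthen your hypothesis from left-soft separativity to strict comparison of positive elements, your functional-comparison argument does go through; but under the stated hypotheses the directedness of $L_x$ is the idea your proposal is missing.
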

	
We show in \autoref{sec:SepCu} that the Cuntz semigroup is left-soft separative whenever the \ca{} has stable rank one or strict comparison of positive elements. 
Under these assumptions, we also show that $\sigma$ is subadditive and, consequently, a generalized Cu-morphism; 
see \autoref{prp:SigmaCuMorCStar}. 
Then $\Cu (A)_\soft$ is a retract of $\Cu (A)$, as defined in \cite{ThiVil22DimCu}.

Using structure results of retracts and soft elements, we study the covering dimension (\cite{ThiVil22DimCu}) and the radius of comparison (\cite{BlaRobTikTomWin12AlgRC}) of \ca{s} with the Global Glimm Property in terms of their soft elements.

\begin{thmIntro}[\ref{prp:SsoftDimGen}]\label{thmIntro:SsoftDim}
 Let $A$ be a \ca{} with the Global Glimm Property. Assume one of the following:
 \begin{itemize}
  \item[(i)] $A$ has strict comparison of positive elements;
  \item[(ii)] $A$ has stable rank one;
  \item[(iii)] $A$ has topological dimension zero, and $\Cu (A)$ is left-soft separative.
 \end{itemize}
 
 Then $\dim(\Cu (A)_\soft)\leq \dim(\Cu (A))\leq \dim(\Cu (A)_\soft)+1$.
\end{thmIntro}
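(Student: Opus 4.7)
The plan is to apply the general dimension theory for $\Cu$-semigroups developed in \cite{ThiVil22DimCu}. I first want to realize $\Cu(A)_\soft$ as a retract of $\Cu(A)$, which immediately yields the lower inequality, and then to bound the "complement" of this retract by a single dimension, which yields the upper one.

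First I would verify the retract structure. Under hypotheses (i) or (ii), \autoref{prp:SigmaCuMorCStar} directly tells us that $\sigma\colon \Cu(A)\to\Cu(A)_\soft$ is a generalized $\Cu$-morphism, so that it realizes $\Cu(A)_\soft$ as a retract of $\Cu(A)$. Under (iii), left-soft separativity is assumed outright, so \autoref{thmInt:RetRanks} supplies monotonicity, superadditivity, and sup-preservation of $\sigma$; subadditivity then has to be established separately, and for this I would use topological dimension zero to approximate arbitrary classes by classes of projections and then check $\sigma(x+y)\leq\sigma(x)+\sigma(y)$ by hand on projection-generated pieces. Granting the retract structure, the lower bound
\[
\dim(\Cu(A)_\soft)\leq \dim(\Cu(A))
\]
follows at once from the fact, proved in \cite{ThiVil22DimCu}, that covering dimension is monotone under retracts of $\Cu$-semigroups.

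For the upper bound $\dim(\Cu(A))\leq \dim(\Cu(A)_\soft)+1$, the heuristic is that the "kernel" of $\sigma$, measured at the level of Cuntz classes, is a projection-generated (hence essentially scattered) piece that should contribute at most one to the dimension. To implement this, I would start from a finite family in $\Cu(A)_\soft$ witnessing $\dim(\Cu(A)_\soft)=n$, lift each of its elements along $\sigma$ to a preimage in $\Cu(A)$, and then adjoin a single extra sheet of elements capturing the non-soft, projection-like contributions of classes in $\Cu(A)$. The fact that $\sigma$ preserves all functionals, by \autoref{thmInt:RetRanks}(ii), ensures that the functional-theoretic obstructions to refining covers agree in $\Cu(A)$ and $\Cu(A)_\soft$, so the extra sheet can be arranged to increase the multiplicity of the resulting cover by at most one.

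The main obstacle is this upper-bound construction: one must choose the additional sheet so that its overlaps with the pulled-back cover are genuinely controlled, rather than proliferating across many Cuntz classes at once. This comes down to a careful analysis of the fibers of $\sigma$---which, under our hypotheses, one expects to be parameterized by classes of projections---and to the interplay between softness and the far-below relation $\ll$ in $\Cu(A)$. Once these fibers are under control, assembling the cover and checking the multiplicity bound should be a routine, if somewhat technical, exercise.
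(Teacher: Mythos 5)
Your lower bound is fine and matches the paper: under (i) or (ii), \autoref{prp:SigmaCuMorCStar} makes $\Cu(A)_\soft$ a retract, and dimension is monotone under retracts by \cite[Proposition~3.15]{ThiVil22DimCu}. But your plan for the upper bound has a structural flaw. The covering dimension of a \CuSgp{} is a condition quantified over \emph{all} configurations $x'\ll x\ll y_1+\cdots+y_r$; there is no single ``finite family in $\Cu(A)_\soft$ witnessing $\dim(\Cu(A)_\soft)=n$'' that you could lift along $\sigma$, and even if there were, lifting would run in the wrong direction, since you must refine arbitrary configurations in $\Cu(A)$, not ones coming from $\Cu(A)_\soft$. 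The paper's argument (\autoref{prp:SsoftDim}) goes the other way: starting from $x'\ll x\ll y_1+\cdots+y_r$ in $S$, it first uses \axiomO{6} and $(2,\omega)$-divisibility (hence $(r,\omega)$-divisibility) to manufacture $v_1,\ldots,v_r$ with $v_j\ll y_j$, $v_1+\cdots+v_r\leq rv\leq x$, and $x''\leq\infty(v_1+\cdots+v_r)$; then the key mechanism $x''\leq\sigma(x'')+(v_1+\cdots+v_r)$ from \autoref{prp:sigmaSoft}(4) lets it push the covering problem down to $S_\soft$ (via subadditivity of $\sigma$, i.e.\ $\sigma(x'')\leq\sigma(y_1)+\cdots+\sigma(y_r)$), solve it there with $n+1$ layers $z_{j,k}\ll\sigma(y_j)\leq y_j$, and adjoin the single extra layer $z_{j,n+1}:=v_j$. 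Your proposed substitutes---analyzing ``fibers of $\sigma$ parameterized by projections'' and using $\lambda(\sigma(x))=\lambda(x)$ to control overlaps---cannot carry this step: functionals detect $\leq$ only under comparison hypotheses, which are exactly what case (ii) lacks, and the dimension refinement is an order-algebraic matter in which functionals never appear in the paper's proof.

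Two further gaps. First, \autoref{prp:SigmaCuMorCStar} (and indeed the very definition of $\sigma$ in \autoref{dfn:sigma}, via $\sup L_x$) requires $\Cu(A)$ countably based, i.e.\ $A$ separable, while the theorem has no separability hypothesis; the paper removes it by the L\"{o}wenheim--Skolem reduction in \autoref{prp:SsoftDimGen}, choosing hypotheses (almost unperforation; Riesz interpolation plus the interval axiom; $S\otimes\{0,\infty\}$ algebraic) precisely because they satisfy the L\"{o}wenheim--Skolem condition. Your proposal silently assumes separability throughout. Second, for case (iii) the paper does not verify subadditivity ``by hand on projection-generated pieces'': it uses that topological dimension zero makes $\Cu(A)\otimes\{0,\infty\}$ algebraic (\cite[Proposition~4.18]{ThiVil23Glimm}), reduces to elements with $y\ll\infty y$, and then cancels via left-soft separativity in \autoref{prp:SigmaCuMor}(iii); some such argument would have to replace your sketch, since strongly soft elements are as far from projections as possible and a projection-based approximation of $\sigma(x+y)\leq\sigma(x)+\sigma(y)$ is not obviously available.
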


\begin{thmIntro}[\ref{prp:SsoftRcCa}]\label{thmIntro:SsoftRC}
Let $A$ be a unital, separable \ca{} with the Global Glimm Property. 
Assume that $A$ has stable rank one. 
Then 
\[
\rc \big( \Cu(A),[1] \big) 
= \rc \big( \Cu(A)_\soft , \sigma ([1]) \big).
\]
\end{thmIntro}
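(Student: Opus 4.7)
The strategy is to transfer radius-of-comparison estimates through the retraction $\sigma\colon \Cu(A)\to \Cu(A)_\soft$ provided by Theorem~C, which under stable rank one is a generalized Cu-morphism restricting to the identity on $\Cu(A)_\soft$ and preserving all functional values. The identity $\lambda(\sigma(x))=\lambda(x)$ for all $x\in\Cu(A)$ and $\lambda\in F(\Cu(A))$ yields a natural bijection between $F(\Cu(A))$ and $F(\Cu(A)_\soft)$ that preserves evaluation on $[1]$ and $\sigma([1])$. In particular, $\sigma([1])$ is full in $\Cu(A)_\soft$ and the normalizations by $[1]$ and $\sigma([1])$ agree, so the strict functional hypotheses defining rc on the two semigroups correspond to one another under the bijection.

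For the easier inequality $\rc(\Cu(A)_\soft,\sigma([1]))\leq \rc(\Cu(A),[1])$, given $x,y\in\Cu(A)_\soft$ satisfying the rc strict-functional hypothesis with parameter $r>\rc(\Cu(A),[1])$, the same hypothesis (via the functional bijection) holds when $x,y$ are viewed as elements of $\Cu(A)$; the rc of $\Cu(A)$ then delivers the comparison $x\leq y$ in $\Cu(A)$, which descends to $\Cu(A)_\soft$ as a sub-Cu-semigroup.

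For the reverse inequality, given $x,y\in\Cu(A)$ satisfying the strict-functional hypothesis with $r>\rc(\Cu(A)_\soft,\sigma([1]))$, applying $\sigma$ transfers the hypothesis to $\sigma(x),\sigma(y)\in\Cu(A)_\soft$ by Theorem~C(ii), yielding $\sigma(x)\leq \sigma(y)$. The main obstacle is to recover $x\leq y$ from $\sigma(x)\leq \sigma(y)$, rather than just the comparison of the soft parts. The plan is to exploit the structure of $\Cu(A)$ in the stable rank one, nowhere scattered regime: each class decomposes as a compact (projection) part plus a soft part, as established in \cite{AntPerRobThi22CuntzSR1}. The soft parts of $x$ and $y$ are controlled by $\sigma(x)\leq \sigma(y)$, while the compact parts are projection classes whose Cuntz comparison reduces to Murray--von Neumann comparison under stable rank one; the strict functional gap $r$ should supply the slack needed to absorb the compact-part discrepancy and yield $x\leq y$. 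This lifting step, intertwining the compact/soft decomposition with the functional gap under stable rank one, is the technical crux of the argument.
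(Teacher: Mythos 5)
Your first inequality is sound and is essentially the paper's own route: the inclusion $\iota\colon\Cu(A)_\soft\to\Cu(A)$ is a \CuMor{} and an order-embedding, every functional on $\Cu(A)$ restricts to one on $\Cu(A)_\soft$, and $\widehat{\sigma([1])}=\widehat{[1]}$ by \autoref{prp:SepSoftRank}, so the comparison hypothesis transfers and the conclusion $x\leq y$ descends, exactly as in \autoref{prp:RadComGenCuMor}. The reverse inequality, however, contains a genuine gap, and you locate it yourself: the passage from $\sigma(x)\leq\sigma(y)$ back to $x\leq y$. Two things go wrong with the proposed repair. First, the compact-plus-soft decomposition you invoke is not available at this generality: the dichotomy that every Cuntz class is either compact or soft is a feature of \emph{simple}, stably finite \ca{s} (see \autoref{pgr:SoftSubCu}, and its use in \autoref{prp:FixedCuWTRP}); in the non-simple nowhere scattered setting an element can behave compactly in one ideal-quotient and softly in another, and \cite{AntPerRobThi22CuntzSR1} establishes no canonical decomposition of an arbitrary element of $\Cu(A)$ into a projection class plus a soft class --- indeed, the machinery of $\sigma$ and $L_x$ in this paper exists precisely because no such decomposition is at hand. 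Second, applying $\sigma$ to both sides destroys the information you then need: for instance $\sigma(x)\leq\sigma(y)$ holds with $y=\sigma(x)$ and $x$ compact with $x\neq\sigma(x)$, where $x\not\leq y$; so any recovery of $x\leq y$ must consume the functional gap $r\,\widehat{[1]}$, and that consumption is exactly the step your sketch defers as ``the technical crux.''

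The paper's proof of \autoref{prp:SsoftRc} (to which \autoref{prp:SsoftRcCa} reduces after checking that stable rank one gives weak cancellation, hence left-soft separativity, and that the Global Glimm Property gives $(2,\omega)$-divisibility) avoids the problem by never forming $\sigma(x)$ and $\sigma(y)$: it softens the two sides asymmetrically, at the cost of an $\varepsilon$. Given $\widehat{x}+(r+\varepsilon)\widehat{e}\leq\widehat{y}$ with $e=[1]$, one uses $(2,\omega)$-divisibility to find $k\in\NN$ and a full, strongly soft $t$ with $kt\leq e$ and $k\varepsilon\geq 1$; then $x+t$ is strongly soft by the absorption result \cite[Theorem~4.14(2)]{ThiVil23arX:Soft}, and $\widehat{x+t}+r\,\widehat{e}\leq\widehat{y}$ since $\widehat{t}\leq\varepsilon\,\widehat{e}$. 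On the other side, \autoref{prp:SoftRankS} supplies $v\in\Cu(A)_\soft$ with $v\leq y$ and $\widehat{v}=\widehat{y}$. Condition (R1) for $\big(\Cu(A)_\soft,\sigma([1])\big)$, read through the homeomorphism $F(\Cu(A))\cong F(\Cu(A)_\soft)$ of \autoref{prp:FScongFSSoft}, then yields $x\leq x+t\leq v\leq y$ directly in $\Cu(A)$, and letting $\varepsilon\to 0$ gives $\rc\big(\Cu(A),[1]\big)\leq\rc\big(\Cu(A)_\soft,\sigma([1])\big)$. If you wish to salvage your outline, replace ``apply $\sigma$ to both sides'' by this add-a-small-full-soft-element trick: the $\varepsilon$-bookkeeping against the functional gap, not a compact/soft decomposition, is what absorbs the discrepancy between $x$ and its soft replacement.
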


We finish the paper with some applications of Theorems \ref{thmIntro:SsoftDim} and \ref{thmIntro:SsoftRC} to crossed products; see \autoref{prp:DimWTRP} and \autoref{exa:rcWTRP}.

\subsection*{Acknowledgements}

Part of this research was conducted during the Cuntz Semigroup Workshop 2021 at WWU Münster, and during the short research visit of the third author to the first author at the Institute of Mathematics of the Czech Academy of Sciences.
They thank both of the institutions for their hospitality. 

This work was completed while the authors were attending the Thematic Program on Operator Algebras and Applications at the Fields Institute for Research in Mathematical Sciences in August and September 2023, and they gratefully acknowledge the support and hospitality of the Fields Institute.

\section{Preliminaries}

In this section we recall definitions and results that will be used in the sections that follow.
The reader is referred to \cite{AraPerTom11Cu}, \cite{AntPerThi18TensorProdCu} and \cite{GarPer23arX:ModernCu} for an extensive introduction to the theory of \CuSgp{s} and their interplay with Cuntz semigroups.

Given a \ca{} $A$, we use $A_+$ to denote the set of its positive elements.

\begin{pgr}[The Cuntz semigroup]
\label{pgr:CuntzSgp}
Let $A$ be a \ca{}. 
Given $a, b \in A_+$, one says that $a$ is \emph{Cuntz subequivalent} to $b$, written $a\precsim b$, if there exists a sequence $(v_n)_n$ in $A$ such that $a=\lim_n v_n b v_n^*$. 
Further, one says that $a$ is \emph{Cuntz equivalent} to $b$, written $a\sim b$, if $a\precsim b$ and $b \precsim a$.

The \emph{Cuntz semigroup} of $A$, denoted by $\Cu (A)$, is the positively ordered monoid defined as the quotient $(A\otimes \mathcal{K})_+/{\sim }$ equipped with the order induced by $\precsim$ and the addition induced by addition of orthogonal elements.
For further details we refer to \cite{AraPerTom11Cu, AntPerThi18TensorProdCu, GarPer23arX:ModernCu}.
\end{pgr}

\begin{pgr}[\CuSgp{s}]
\label{pgr:CuSgps}
Let $(P, \leq)$ be a partially ordered set. 
Suppose that every increasing sequence in $P$ has a supremum. 
Given two elements $x,y$ in $P$, one says that $x$ is \emph{way-below} $y$, 
denoted $x\ll y$, if for every increasing sequence $(z_n)_n$ in $P$ satisfying $y\leq \sup_n z_n$, there exists some $m\in\NN$ such that $x\leq z_m$.

As defined in \cite{CowEllIva08CuInv}, a \emph{\CuSgp{}} is a positively ordered monoid~$S$ satisfying two domain-type conditions and two compatibility conditions:
\begin{enumerate}
\item[(O1)]
Every increasing sequence in $S$ has a supremum.
\item[(O2)]
For every element $x$ in $S$, there exists a sequence $(x_n)_{n}$ in $S$ such that $x_0 \ll x_1 \ll x_2 \ll \cdots$ and such that $x = \sup_n x_n$.  
\item[(O3)] 
The addition is compatible with the way-below relation, that is, for every $x', x, y', y  \in S$ satisfying $x' \ll x$ and $y' \ll y$, we have $x' + y' \ll x + y$.
\item[(O4)]
The addition is compatible with suprema of increasing sequences, that is, for every increasing sequences $(x_n)_n$ and $(y_n)_n$ in $S$, we have $\sup_n (x_n+y_n) = \sup_nx_n + \sup_ny_n$.
\end{enumerate} 

It follows from \cite{CowEllIva08CuInv} that the Cuntz semigroup of any \ca{} always satisfies \axiomO{1}-\axiomO{4}. 
Specifically, the Cuntz semigroup of any \ca{} is a \CuSgp{}.

Given a monoid morphism $\varphi$ between two \CuSgp{s}, we say that $\varphi$ is a \emph{\CuMor{}} if it preserves the order, suprema of increasing sequences, and the way-below relation.
A \emph{generalized \CuMor} is a monoid map that preserves order and suprema of increasing sequences (but not necessarily the way-below relation).

The following properties, which will often be considered throughout the paper, are also satisfied in the Cuntz semigroup of any \ca{};
see \cite[Proposition~4.6]{AntPerThi18TensorProdCu} and its precursor \cite[Lemma~7.1]{RorWin10ZRevisited} for \axiomO{5}, \cite[Proposition~5.1.1]{Rob13Cone} for \axiomO{6}, and \cite[Proposition~2.2]{AntPerRobThi21Edwards} for \axiomO{7}.
\begin{itemize}
\item[\axiomO{5}]
For every $x, y, x', y', z\in S$ satisfying $x+y\leq z$ and $x'\ll x$ and $y'\ll y$, there exists $c \in S$ such that $y'\ll c$ and $x'+c\leq z \leq x+c$.

This property is often applied with $y'=y=0$.
\item[\axiomO{6}]
For every $x, x', y, z\in S$ satisfying $x'\ll x\ll y+z$, there exist $v, w \in S$ such that
\[
v \leq x,y, \quad 
w \leq x,z, \andSep
x'\leq v+w.
\]
\item[\axiomO{7}]
For every $x, x', y, y', w \in S$ satisfying $x'\ll x\leq w$ and $y'\ll y\leq w$, there exists $z \in S$ such that $x',y'\ll z\leq w,x+y$.
\end{itemize}
\end{pgr}

Given an element $x$ in a \CuSgp{}, we denote by $\infty x$ the supremum of the increasing sequence $(nx)_n$.

\begin{pgr}[The Global Glimm Property and nowhere scatteredness]
\label{pgr:GGPNSCa}
A \ca{} $A$ is said to be \emph{nowhere scattered} if no hereditary sub-\ca{} of $A$ has a nonzero one-dimensional representation. 
Equivalently, $A$ is nowhere scattered if and only if $A$ has no nonzero elementary ideal-quotients; see \cite[Definition~A]{ThiVil21arX:NowhereScattered} and \cite[Theorem~3.1]{ThiVil21arX:NowhereScattered}.

We say that $A$ has the \emph{Global Glimm Property} (in the sense of \cite[Definition~4.12]{KirRor02InfNonSimpleCalgAbsOInfty}) if, for every $a\in A_+$ and $\varepsilon >0$, there exists a square-zero element $r\in\overline{aAa}$ such that $(a-\varepsilon )_+\in\overline{\rm span}ArA$; see \cite[Section~3]{ThiVil23Glimm}.

A \ca{} satisfying the Global Glimm Property is always nowhere scattered. 
The converse remains open, and is known as the \emph{Global Glimm Problem}. 
The problem has been answered affirmatively under the additional assumption of real rank zero (\cite{EllRor06Perturb}) or stable rank one (\cite{AntPerRobThi22CuntzSR1}).

A \CuSgp{} is said to be $(2,\omega )$-divisible if, for every pair $x',x\in S$ with $x'\ll x$, there exists $y\in S$ such that $2y\leq x$ and $x'\leq \infty y$; see \cite[Definition~5.1]{RobRor13Divisibility}.

For a detailed study of the Global Glimm Problem and its relation with the Cuntz semigroup we refer to \cite{ThiVil23Glimm}; 
see also \cite{Vil23pre:MultNSCa}. 
Among other results, it follows from \cite[Theorem~3.6]{ThiVil23Glimm} that a \ca{} $A$ has the Global Glimm Property if and only if $\Cu (A)$ is $(2,\omega )$-divisible.
\end{pgr}

\section{Soft operators and strongly soft Cuntz classes}
\label{sec:Soft}

In this section, we first recall the definitions of (completely) soft operators in \ca{s} and of strongly soft elements in \CuSgp{s}.
We then connect these notions and show that, for a \ca{} $A$ with the Global Glimm Property, an element in the Cuntz semigroup $\Cu(A)$ is strongly soft if and only if it has a soft representative;
see \autoref{prp:SoftStSoftEq} and \autoref{prp:CharSoftCuClassGGP}.

\medskip

As defined in \cite[Definition~4.2]{ThiVil23arX:Soft}, an element $x$ in a \CuSgp{} $S$ is \emph{strongly soft} if for all $x' \in S$ with $x' \ll x$ there exists $t\in S$ such that
\[
x'+t \ll x, \andSep 
x'\ll \infty t.
\]

This notion of softness is stronger than the one considered in \cite[Definition~5.3.1]{AntPerThi18TensorProdCu}. 
However, if $S$ is residually stably finite, both notions agree; 
see \cite[Proposition~4.6]{ThiVil23arX:Soft}. 
In particular, this applies to weakly cancellative \CuSgp{s} (see \autoref{pgr:StR1} below).

\smallskip
 
As mentioned in the introduction, a positive element $a$ in a \ca{} $A$ is said to be \emph{soft} if its hereditary sub-\ca{} has no nonzero unital quotients. 
This definition can be seen as a generalization of \emph{pure positivity}, a notion introduced in \cite[Definition~2.1]{PerTom07Recasting} for simple \ca{s}.
An element $a \in A_+$ is said to be \emph{completely soft} if $(a-\varepsilon)_+$ is soft for every $\varepsilon>0$, where $(a-\varepsilon)_+$ denotes the `cut-down' of $a$ given by applying functional calculus to $a$ with the function $f(t)=\max\{t-\varepsilon,0\}$.

As in \cite[Definition~5.2]{ThiVil23arX:Soft}, we say that a \ca{} $A$ has an \emph{abundance of soft elements} if, for every $a\in A_+$ and $\varepsilon >0$, there exists a positive, soft element $b \in \overline{aAa}$ such that $(a-\varepsilon )_+\in\overline{\rm span}AbA$. 
By \cite[Proposition~7.7]{ThiVil23arX:Soft}, any \ca{} with the Global Glimm Property has an abundance of soft elements.
 
If $a \in A_+$ is soft, then its Cuntz class $[a]$ is strongly soft; 
see \cite[Proposition~4.16]{ThiVil23arX:Soft}.
Conversely, we prove in \autoref{prp:SoftStSoftEq} below that if $A$ has an abundance of soft elements (in particular, if $A$ has the Global Glimm Property), then every strongly soft Cuntz class arises this way, that is, a Cuntz class $[b] \in \Cu(A)$ is strongly soft if and only if there exists a soft element $a \in (A\otimes\KK)_+$ with $b\sim a$.
It remains unclear if this also holds for general \ca{s};
see \cite[Question~4.17]{ThiVil23arX:Soft}.

Given $a,b\in A_+$, we will write $a\vartriangleleft b$ whenever $a\in\overline{\rm span}AbA$.
We say that two positive operators $a$ and $b$ in a \ca{} are \emph{orthognal} if $ab=0$.

The next result is the \ca{ic} analog of \cite[Theorem~4.14(2)]{ThiVil23arX:Soft}.

\begin{prp}
\label{prp:AbsorbingSoftCAlg}
Let $a$ and $b$ be orthogonal positive elements in a \ca{} such that $a \lhd b$, and such that $b$ is soft.
Then $a+b$ is soft. 
\end{prp}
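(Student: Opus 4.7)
The plan is to argue by contradiction. Suppose $a+b$ is not soft, so that the hereditary sub-\ca{} $H := \overline{(a+b)A(a+b)}$ admits a closed ideal $I \subsetneq H$ with $H/I$ nonzero and unital; let $\pi \colon H \to H/I$ denote the quotient map. The goal is to extract from this data a nonzero unital quotient of $\overline{bAb}$, contradicting the softness of $b$.

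Two preliminary facts will underpin the rest of the argument. First, $\pi(b) \neq 0$: indeed, if $b \in I$, then $\overline{AbA} \subseteq \overline{AIA}$, and since $a \in \overline{AbA}$ by the hypothesis $a \lhd b$, while $a \in H$, the Galois correspondence between closed ideals of $A$ and of $H$ (giving $\overline{AIA} \cap H = I$) forces $a \in I$; then $a+b \in I$, and as $a+b$ is strictly positive in $H$ this contradicts the properness of $I$. Second, $\pi(a+b)$ is invertible in $H/I$: being strictly positive in $H$, it remains strictly positive in the unital quotient, and strictly positive elements of unital \ca{s} are always invertible.

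The main step uses orthogonality through a commutative subalgebra. Since $\pi(a)\pi(b)=0$, the sub-\ca{} $D_0 := C^*(\pi(a), \pi(b), 1_{H/I})$ is commutative and unital, hence isomorphic to $C(K)$ for some compact Hausdorff $K$. Under this isomorphism $\pi(a)$ and $\pi(b)$ correspond to positive functions $f_a, f_b$ with $f_a f_b=0$, and, since $\pi(a+b)$ is invertible, $f_a + f_b$ is nowhere vanishing on $K$. Therefore $K_a := \{f_a > 0\}$ and $K_b := \{f_b > 0\}$ form a clopen partition of $K$, and their characteristic functions give orthogonal projections $p_a, p_b \in D_0 \subseteq H/I$ summing to $1_{H/I}$. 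On $K_b$ the function $f_b$ is positive and bounded away from zero, hence invertible in $C(K_b) \cong p_b D_0 p_b$, so $\pi(b)$ is invertible in $p_b (H/I) p_b$. Consequently $\overline{\pi(b)(H/I)\pi(b)} = p_b (H/I) p_b$, which is unital and, since $\pi(b) \neq 0$, nonzero. Because $\overline{bAb} = \overline{bHb}$ (as $H$ is hereditary in $A$ and $b \in H$), the restriction of $\pi$ to $\overline{bAb}$ surjects onto $p_b (H/I) p_b$, yielding the desired nonzero unital quotient of $\overline{bAb}$ and contradicting softness of $b$.

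The step I expect to be most delicate is the conclusion that $\overline{\pi(b)(H/I)\pi(b)}$ is unital. A full positive element of a unital \ca{} need not generate a unital hereditary sub-\ca{}; one really needs a spectral gap at $0$. The argument above extracts precisely such a gap from the combination of $\pi(a+b)$ being invertible and $\pi(a) \perp \pi(b)$, via the clopen decomposition inside the commutative subalgebra they generate. The hypothesis $a \lhd b$ enters only to rule out the degenerate case $\pi(b)=0$, without which the argument cannot even begin.
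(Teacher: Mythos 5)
Your proof is correct, but it takes a genuinely different route from the paper's. The paper does not argue by contradiction: it invokes the characterization of softness from \cite[Proposition~3.6]{ThiVil23arX:Soft} --- a positive element $c$ is soft if and only if for every $\varepsilon>0$ there exists $r\in(\overline{cAc})_+$ orthogonal to $(c-\varepsilon)_+$ with $c\lhd r$ --- and then simply checks that any witness $r$ for $b$ already witnesses softness of $a+b$: orthogonality gives $((a+b)-\varepsilon)_+=(a-\varepsilon)_++(b-\varepsilon)_+$, the element $r\in\overline{bAb}$ is automatically orthogonal to $a$, and $a+b\lhd b\lhd r$. That argument is three lines but outsources the real content to the companion paper. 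You instead work straight from the definition, transferring a hypothetical nonzero unital quotient $H/I$ of $H=\overline{(a+b)A(a+b)}$ down to one of $\overline{bAb}$: the ideal correspondence $\overline{AIA}\cap H=I$ for hereditary subalgebras rules out $\pi(b)=0$ (your only use of $a\lhd b$, where the paper uses it to get $a+b\lhd r$), and the clopen partition of the spectrum of $C^*(\pi(a),\pi(b),1)$ --- forced by orthogonality together with invertibility of the strictly positive element $\pi(a+b)$ --- yields the spectral gap making $\overline{\pi(b)(H/I)\pi(b)}=p_b(H/I)p_b$ unital; since $\overline{bAb}=\overline{bHb}$, this corner is a quotient of $\overline{bAb}$, contradicting softness of $b$. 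All the delicate steps you flag do check out (strict positivity passes to quotients and gives invertibility in a unital algebra; the corner equals the hereditary subalgebra generated by $\pi(b)$ because $\pi(b)$ is invertible there). What the paper's route buys is brevity and an explicit $\varepsilon$-witness for softness of $a+b$, staying inside the $\lhd$-witness framework used throughout; what yours buys is self-containedness modulo standard C*-facts and a structural explanation of \emph{why} the proposition holds: any unital quotient upstairs must localize, via the clopen decomposition, onto the soft summand.
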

\begin{proof}
By \cite[Proposition~3.6]{ThiVil23arX:Soft}, a positive element $c$ in a \ca{} is soft if and only if for every $\varepsilon>0$ there exists $r \in (\overline{cAc})_+$ such that $r$ is orthogonal to $(c-\varepsilon)_+$ and such that $c \lhd r$.
Using this characterization for $b$, we show that it is satisfied for $a+b$.

To verify that $a+b$ is soft, let $\varepsilon>0$.
Using that $b$ is soft, we obtain $r \in (\overline{bAb})_+$ such that $r$ is orthogonal to $(b-\varepsilon)_+$ and such that $b \lhd r$.
Since $a$ and $b$ are orthogonal, we have
\[
((a+b)-\varepsilon)_+
= (a-\varepsilon)_+ + (b-\varepsilon)_+.
\]

Since $r$ belongs to $\overline{bAb}$, it is also orthogonal to $a$, and thus also orthogonal to $((a+b)-\varepsilon)_+$.
Further, we have $a+b \lhd b \lhd r$, as desired.
\end{proof}

\begin{lma}
\label{prp:SoftSupDense}
Let $A$ be a \ca{} with an abundance of soft elements, let $a \in A_+$ be such that $x:=[a]\in\Cu (A)$ is strongly soft, and let $x' \in \Cu(A)$ satisfy $x'\ll x$.
Then there exists a positive, completely soft element $b \in \overline{aAa}$ such that
\[
x' \ll [b] \ll x.
\]
\end{lma}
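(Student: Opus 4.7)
My strategy is to construct $b$ as a sum $b=a_0+s$ of two orthogonal positive elements in $\overline{aAa}$, where $a_0$ carries enough ``weight'' to ensure $x'\ll[b]$, and $s$ is a (completely) soft element absorbing $a_0$, so that \autoref{prp:AbsorbingSoftCAlg} and its iterative use make $b$ soft (completely soft).

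I would begin on the Cuntz semigroup side. Since $x=[a]$ is strongly soft, applying the defining property to $x'$ (after interpolating an intermediate $x'\ll x''\ll x$) produces $t\in\Cu(A)$ with $x''+t\ll x$ and $x'\ll\infty t$. Using $[a]=\sup_{\varepsilon}[(a-\varepsilon)_+]$, I fix $\varepsilon>0$ with $x''+t\ll[(a-\varepsilon)_+]$.

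Next I lift this decomposition into $A$. Combining axiom \axiomO{5} with the standard trick of splitting a cut-down $(a-\varepsilon/2)_+$ into two orthogonal positive pieces via continuous functional calculus, I produce orthogonal positive elements $a_0,c\in\overline{aAa}$ with $a_0+c\precsim(a-\varepsilon/2)_+$, $[a_0]\geq x''$, and $[c]\geq t$. The relation $x'\ll\infty t\leq\infty[c]$ then gives $a_0\vartriangleleft c$. I would now invoke the abundance of soft elements applied to $c$: there is a soft $s\in\overline{cAc}$ with $(c-\delta)_+\vartriangleleft s$ for some small $\delta>0$, chosen so that $a_0\vartriangleleft(c-\delta)_+\vartriangleleft s$. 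Since $s\in\overline{cAc}$ and $a_0\perp c$, we have $a_0\perp s$. Setting $b:=a_0+s$, \autoref{prp:AbsorbingSoftCAlg} shows $b$ is soft, $b\in\overline{aAa}$ is immediate, and
\[
x'\ll x''\leq[a_0]\leq[b]\leq[a_0]+[c]\leq[(a-\varepsilon/2)_+]\ll[a].
\]

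The main obstacle is upgrading ``soft'' to ``completely soft''. For each $\eta>0$, orthogonality gives $(b-\eta)_+=(a_0-\eta)_+ +(s-\eta)_+$, so by \autoref{prp:AbsorbingSoftCAlg} it suffices to know that $(s-\eta)_+$ is soft and that $(a_0-\eta)_+\vartriangleleft(s-\eta)_+$, which amounts to $s$ being completely soft. I expect this to require refining the choice of $s$ --- for instance, applying the abundance of soft elements along a decreasing sequence of cut-downs of $c$ and piecing the results together via \autoref{prp:AbsorbingSoftCAlg}, or by first passing to a Cuntz-equivalent completely soft replacement of $s$ inside $\overline{cAc}$ (using that strong softness is preserved under suprema of increasing sequences). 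This final calibration is where the technicalities concentrate.
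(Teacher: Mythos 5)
Your architecture is the same as the paper's (interpolate $x' \ll x'' \ll x$, split off $t$ by strong softness, realize the two pieces orthogonally inside $\overline{aAa}$, let abundance of soft elements supply a soft absorber, and conclude softness of the sum from \autoref{prp:AbsorbingSoftCAlg}), but the proposal has a genuine gap precisely at the point you flag: the upgrade from soft to completely soft. Neither of your fallback ideas works as stated. Iterating abundance along cut-downs of $c$ produces a different soft element for each cut-down level, with no mechanism to merge them into a single element all of whose cut-downs are soft; and ``passing to a Cuntz-equivalent completely soft replacement of $s$ inside $\overline{cAc}$'' conflates the semigroup level with the algebra level --- strong softness of $[s]$ (closure under suprema) does not yield a completely soft representative, which is essentially the statement being proved. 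The paper resolves this with an import you do not have in play: \cite[Theorem 6.9]{ThiVil23arX:Soft}, which for the positive soft element $c'+e \in A_+$ produces a completely soft $f \in A_+$ with $\overline{fAf} = \overline{(c'+e)A(c'+e)} \subseteq \overline{aAa}$. A final cut-down $b := (f-\delta)_+$ then stays completely soft (cut-downs of a cut-down are cut-downs of $f$) and simultaneously gives the strict relation $[b] \ll [f] \leq x$, with $x' \ll [(f-\delta)_+]$ for $\delta$ small. Without this theorem, your $b = a_0 + s$ is only soft, and the lemma is not proved.

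Two intermediate steps would also fail as written. First, splitting $(a-\varepsilon/2)_+$ into orthogonal pieces ``via continuous functional calculus'' yields spectral pieces of $a$; there is no reason such pieces Cuntz-dominate the abstractly given classes $x''$ and $t$. The paper instead chooses orthogonal representatives $c, d \in (A\otimes\KK)_+$ of $x''$ and $t$ (always possible by the definition of addition in $\Cu(A)$), notes $c+d \precsim a$, and applies R{\o}rdam's lemma: writing $((c+d)-\varepsilon)_+ = vv^*$ with $v^*v \in \overline{aAa}$, the conjugates $c' := v^*(c-\varepsilon)_+v$ and $d' := v^*(d-\varepsilon)_+v$ are orthogonal elements of $\overline{aAa}$ Cuntz-equivalent to the cut-downs --- that is the missing mechanism for your $a_0$ and $c$. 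Second, your justification of the absorption is a non sequitur: $a_0 \lhd c$ means $[a_0] \leq \infty[c]$, but you only bound $[a_0]$ from \emph{below} ($x'' \leq [a_0]$), and $x' \ll \infty t$ says nothing about $[a_0]$. You need the opposite sandwich: take $a_0$ with $x' \ll [a_0] \leq x''$ and apply the strong-softness definition at $x''$ (giving $x'' \ll \infty t$, not merely $x' \ll \infty t$), so that $[a_0] \leq x'' \leq \infty[(c-\delta)_+] \leq \infty[s]$ for $\delta$ small --- exactly the paper's arrangement $[c'] \leq [c] = x'' \leq \infty[(d'-\delta)_+] \leq \infty[e]$. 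With these repairs your construction matches the paper's proof up to the completely-soft upgrade, which remains the essential missing ingredient.
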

\begin{proof}
Choose $x''\in \Cu (A)$ such that $x'\ll x''\ll x$. 
Using that $x$ is strongly soft, we know that there exists $t\in\Cu (A)$ such that $x''\ll \infty t$ and $x''+t\ll x$.
Choose orthogonal positive elements $c,d \in A\otimes\KK$ and $\varepsilon>0$ such that
\[
x'' = [c], \quad
t = [d], \quad
x' \ll [(c-\varepsilon)_+], \andSep
x'' \ll \infty[(d-\varepsilon)_+].
\]

Using that $c+d \precsim a$, we can apply R{\o}rdam's lemma (see, for example, \cite[Theorem~2.30]{Thi17:CuLectureNotes}) to obtain $x \in A\otimes\KK$ such that
\[
((c+d)-\varepsilon)_+ = xx^*, \andSep
x^*x \in \overline{aAa}.
\]

Set
\[
c' := x^*(c-\varepsilon)_+x, \andSep
d' := x^*(d-\varepsilon)_+x.
\]
Then $c',d' \in \overline{aAa}$.
Since $c$ and $d$ are orthogonal, we have
\[
((c+d)-\varepsilon)_+
= (c-\varepsilon)_+ + (d-\varepsilon)_+.
\]
It follows that $c'$ and $d'$ are orthogonal, and that $c' \sim (c-\varepsilon)_+$ and $d' \sim (d-\varepsilon)_+$.

In particular, we have $x'' \ll \infty[(d-\varepsilon)_+] = \infty[d']$, and we obtain $\delta>0$ such that $x'' \ll \infty[(d'-\delta)_+]$.
Applying that $A$ has an abundance of soft elements for~$d'$ and~$\delta$, we obtain a soft element $e \in (\overline{d'Ad'})_+$ such that $(d'-\delta)_+ \lhd e$.
Since~$c'$ and~$d'$ are orthogonal, and $e$ belongs to $\overline{d'Ad'}$, it follows that $c'$ and $e$ are orthogonal.

Using that positive elements $g,h$ in a \ca{} satisfy $g \lhd h$ if and only if $[g] \leq \infty[h]$, we have
\[
[c'] 
= [(c-\varepsilon)_+]
\leq [c]
= x''
\leq \infty[(d'-\delta)_+]
\leq \infty[e]
\]
and thus $c' \lhd e$.
By \autoref{prp:AbsorbingSoftCAlg}, $c'+e$ is soft.

Note that $c'$ and $e$ belong to $\overline{aAa}$.
In particular, $c'+e$ belongs to $A_+$, and we can apply \cite[Theorem 6.9]{ThiVil23arX:Soft} to obtain a completely soft element $f \in A_+$ such that $\overline{fAf} = \overline{(c'+e)A(c'+e)} \subseteq \overline{aAa}$.
Then $f \in \overline{aAa}$, and therefore $[f] \leq [a] = x$.
Further, we have
\[
x' \ll [(c-\varepsilon)_+] = [c'] \leq [c'+e] = [f].
\]

Choose $\delta>0$ such that
\[
x' \ll [(f-\delta)_+],
\]
and set $b := (f-\delta)_+$.
Since cut-downs of $(f-\delta)_+$ are also cut-downs of $f$, we see that~$b$ is completely soft.
Further, we have
\[
x' \ll [b] = [(f-\delta)_+] \ll [f] \leq x,
\]
which shows that $b$ has the desired properties.
\end{proof}

A unital \ca{} is said to have \emph{stable rank one} if its invertible elements are norm-dense;
and a general \ca{} is said to have stable rank one if its minimal unitization does;
see \cite[Section~V.3.1]{Bla06OpAlgs}.

A \ca{} is said to have \emph{weak stable rank one} if $A\subseteq \overline{{\rm Gl}(\tilde{A})}$. 
Any stable \ca{} has weak stable rank one; 
see \cite[Lemma~4.3.2]{BlaRobTikTomWin12AlgRC}.

\begin{thm}
\label{prp:SoftStSoftEq}
Let $A$ be a \ca{} with an abundance of soft elements, and let $a\in A_+$ be such that $[a]\in\Cu (A)$ is strongly soft. 
Then there exists a sequence $(a_n)_n$ of completely soft elements in $(\overline{aAa})_+$ such that $([a_n])_n$ in $\Cu(A)$ is $\ll$-increasing with $[a] = \sup_n [a_n]$.

If, moreover, $A$ has weak stable rank one, then $[a]$ is strongly soft if and only if there exists a completely soft element $b\in A_+$ such that $[a]=[b]$.
\end{thm}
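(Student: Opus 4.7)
The first statement follows from an iterated application of \autoref{prp:SoftSupDense}. By axiom \axiomO{2}, choose a $\ll$-increasing sequence $(y_n)_n$ in $\Cu(A)$ with $\sup_n y_n = [a]$. Inductively construct $(a_n)_n$ as follows. Apply \autoref{prp:SoftSupDense} with $x' := y_0$ to obtain a completely soft $a_0 \in (\overline{aAa})_+$ with $y_0 \ll [a_0] \ll [a]$. Given $a_0, \ldots, a_n$ with $y_i \ll [a_i]$ for $0 \leq i \leq n$ and $[a_j] \ll [a_{j+1}]$ for $0 \leq j \leq n-1$, use that $[a_n] \ll [a] = \sup_m y_m$ to pick $m \geq n+1$ with $[a_n] \leq y_m$, and apply \autoref{prp:SoftSupDense} with $x' := y_m$ to produce a completely soft $a_{n+1} \in (\overline{aAa})_+$ satisfying $y_m \ll [a_{n+1}] \ll [a]$. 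Since $[a_n] \leq y_m \ll [a_{n+1}]$ and $y_{n+1} \leq y_m \ll [a_{n+1}]$, the inductive hypothesis is preserved. The identity $\sup_n [a_n] = [a]$ then follows from $y_n \leq [a_n] \leq [a]$ combined with $\sup_n y_n = [a]$.

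For the second statement, the direction $(\Leftarrow)$ is immediate: if $b \in A_+$ is completely soft, each $(b-1/n)_+$ is soft, hence each $[(b-1/n)_+]$ is strongly soft by the recalled \cite[Proposition~4.16]{ThiVil23arX:Soft}. Since $[(b-1/n)_+] \ll [(b-1/(n+1))_+]$ and $[b] = \sup_n [(b-1/n)_+]$, and since the subset of strongly soft elements is closed under suprema of increasing sequences (if $x' \ll \sup_n x_n$ with each $x_n$ strongly soft, then $x' \leq x_m$ for some $m$, and a witness $t$ for strong softness of $x_m$ also witnesses it for $\sup_n x_n$), we conclude that $[a] = [b]$ is strongly soft.

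For the direction $(\Rightarrow)$, apply the first statement to produce $(a_n)_n$ completely soft in $(\overline{aAa})_+$ with $[a_n] \ll [a_{n+1}]$ and $\sup_n [a_n] = [a]$. Using weak stable rank one, realize these Cuntz classes by a pointwise increasing Cauchy sequence: via R{\o}rdam's lemma combined with the approximate unitary equivalence available under weak stable rank one (compare \cite[Section~4]{BlaRobTikTomWin12AlgRC} and \cite{AntPerRobThi22CuntzSR1}), inductively construct $b_n \in A_+$ with $b_n \sim a_n$, $b_n \leq b_{n+1}$, and $\|b_{n+1} - b_n\| < 2^{-n}$. Then $b := \lim_n b_n$ lies in $A_+$ with $\overline{bAb} = \overline{\bigcup_n \overline{b_n A b_n}}$ and $[b] = \sup_n [b_n] = [a]$. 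Softness of $b$ is inherited from that of the $b_n \sim a_n$ through the inductive-limit structure of $\overline{bAb}$; an application of \cite[Theorem~6.9]{ThiVil23arX:Soft} then replaces $b$ by a completely soft element in the same hereditary, whose Cuntz class remains $[a]$.

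The principal obstacle is this $(\Rightarrow)$ direction: one must delicately control the perturbations $\|b_{n+1} - b_n\|$ via weak stable rank one and verify softness of the limit $b$. This is precisely where the weak stable rank one hypothesis is essential, effectively settling \cite[Question~4.17]{ThiVil23arX:Soft} in the present setting by witnessing the strong softness of $[a]$ through an actual (completely) soft $\mathrm{C}^*$-algebra element rather than merely a Cuntz-theoretic surrogate.
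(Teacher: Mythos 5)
Your first part and the backward implication are essentially the paper's own argument: the paper likewise iterates \autoref{prp:SoftSupDense} along a $\ll$-increasing sequence with supremum $[a]$ (it interpolates $[a_n],x_n\ll x_n'\ll[a]$ where you instead pass to a later term $y_m$; both work), and for the backward implication it simply quotes \cite[Proposition~4.16]{ThiVil23arX:Soft}. Your detour through cut-downs and closure of strong softness under suprema is fine up to a small slip: from $x'\leq x_m$ you cannot invoke the definition of strong softness of $x_m$, which requires $x'\ll x_m$; interpolate, use $x'\leq x_m\ll x_{m+1}$, or just cite \cite[Theorem~4.14]{ThiVil23arX:Soft}.

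The forward implication has a genuine gap. You record of your representatives only that $b_n\sim a_n$, and then assert that softness of $b=\lim_n b_n$ is ``inherited from that of the $b_n\sim a_n$''. But Cuntz equivalence does not transfer softness: softness is a property of the generated hereditary sub-\ca{}, which is invariant under unitary equivalence but not under $\sim$. Indeed, whether every strongly soft class admits even one soft representative is precisely \cite[Question~4.17]{ThiVil23arX:Soft}; if softness passed along $\sim$, both that question and the present theorem would be trivial. What weak stable rank one actually provides (via R{\o}rdam's lemma, \cite[{\S}2.5]{Thi17:CuLectureNotes}) are parameters $\delta_n>0$ and unitaries $u_n\in\widetilde{A}$ with $b_n:=u_n(a_n-\delta_n)_+u_n^*$ and $\overline{b_nAb_n}\subseteq\overline{b_{n+1}Ab_{n+1}}$. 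The paper's proof exploits exactly this: since $b_n\sim_u(a_n-\delta_n)_+$, the two elements generate $\ast$-isomorphic hereditary sub-\ca{s}, so each $b_n$ is completely soft because $(a_n-\delta_n)_+$ is a cut-down of the completely soft $a_n$; then \cite[Proposition~2.17]{ThiVil23arX:Soft} passes softness to the closed increasing union. This unitary-equivalence-to-cut-downs step is the crux your write-up skips. Two further points would need repair. First, you demand the pointwise order $b_n\leq b_{n+1}$ together with $\|b_{n+1}-b_n\|<2^{-n}$ and take a norm limit; weak stable rank one does not deliver pointwise domination with norm control while hitting prescribed Cuntz classes, and it is unnecessary: the paper works with hereditary containment and forms $b_\infty:=\sum_n\frac{1}{2^n\|b_n\|}b_n$, which lies in the closed union and satisfies $\overline{b_\infty Ab_\infty}=\overline{\bigcup_n\overline{b_nAb_n}}$, avoiding any Cauchy argument. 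Second, the construction yields $b_n\sim(a_n-\delta_n)_+$ rather than $b_n\sim a_n$, so to conclude $[b_\infty]=\sup_n[a_n]=[a]$ you must interleave, choosing $\delta_n$ with $[a_{n-1}]\leq[(a_n-\delta_n)_+]$, which is possible since $[a_{n-1}]\ll[a_n]$. With these corrections, your final appeal to \cite[Theorem~6.9]{ThiVil23arX:Soft} closes the argument exactly as in the paper.
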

\begin{proof}
Choose a $\ll$-increasing sequence $(x_n)_n$ in $\Cu (A)$ with supremum $[a]$.
We will inductively choose completely soft elements $a_n \in (\overline{aAa})_+$ such that
\[
x_n \ll [a_n] \ll [a], \andSep
[a_n] \ll [a_{n+1}]
\]
for $n\in\NN$.
To start, apply \autoref{prp:SoftSupDense} for $x_0 \ll [a]$ to obtain a completely soft element $a_0 \in (\overline{aAa})_+$ such that $x_0 \ll [a_0] \ll [a]$.
Assuming we have chosen $a_0,\ldots,a_n$, find $x_n' \in \Cu(A)$ such that $[a_n],x_n \ll x_n' \ll [a]$.
Applying \autoref{prp:SoftSupDense} for $x_n' \ll [a]$ we obtain a completely soft element $a_{n+1} \in (\overline{aAa})_+$ such that $x_n' \ll [a_{n+1}] \ll [a]$.
Proceeding inductively, we obtain the desired sequence $(a_n)_n$.

\smallskip

Next, assume that $A$ has weak stable rank one.
By \cite[Proposition~4.16]{ThiVil23arX:Soft}, soft operators have strongly soft Cuntz classes.
Conversely, assuming that $[a]$ is strongly soft, we will show that $[a] = [b]$ for some completely soft element $b\in A_+$.

Let $(a_n)_n$ be as above.
We will show that $\sup_n [a_n]$ (which is $[a]$), has a soft representative. 
Given $c,d \in A_+$ we will write $c \sim_u d$ if there exists a unitary $u\in\tilde{A}$ such that $c=udu^*$; 
and we write $c \subseteq d$ if $\overline{cAc} \subseteq \overline{dAd}$.
 
Using \cite[{\S}2.5]{Thi17:CuLectureNotes}, one can find a sequence $(\delta_n)_n$ in $(0,\infty)$, and a sequence of contractive elements $(b_n)_n$ in $A_+$ such that 
\[
\begin{matrix}
a_1 & \precsim & a_2 & \precsim & a_3 & \precsim & \ldots\\
\rotatebox{90}{$\leq$} & & \rotatebox{90}{$\leq$} & & \rotatebox{90}{$\leq$} & \\
(a_1-\delta_1)_+ & & (a_2-\delta_2)_+ & & (a_3-\delta_3)_+ & & \ldots \\
\rotatebox{90}{$\sim_u$} & & \rotatebox{90}{$\sim_u$} & & \rotatebox{90}{$\sim_u$} & \\
b_1 & \subseteq & b_2 & \subseteq & b_3 & \subseteq & \ldots 
\end{matrix} 
\]
and, setting $b_\infty:=\sum_n \frac{1}{2^n \Vert b_n \Vert } b_n$, such that $[b_\infty]=\sup_n [a_n]$.

For each $n\in\NN$, since $a_n$ is completely soft, so is the element $(a_n-\delta_n )_+$.
Since $(a_n-\delta_n )_+$ and~$b_n$ are unitarily equivalent, they generate $\ast$-isomorphic hereditary sub-\ca{s} of $A$, and it follows that $b_n$ is completely soft as well.

Further, since $b_0\subseteq b_1\subseteq\ldots$ and $b_\infty=\sum_n \frac{1}{2^n \Vert b_n \Vert } b_n$, the sequence of hereditary sub-\ca{s} $\overline{b_{n}Ab_{n}}$ is increasing with $\overline{b_\infty A b_\infty} = \overline{\bigcup_n \overline{b_n A b_n}}$.
Since each $\overline{b_n A b_n}$ has no nonzero unital quotients, it follows from \cite[Proposition~2.17]{ThiVil23arX:Soft} that neither does $\overline{b_\infty A b_\infty}$. 
This proves that $b_\infty$ is soft.

Note that $b_\infty$ belongs to $A_+$.
Applying \cite[Theorem 6.9]{ThiVil23arX:Soft}, we obtain a completely soft element $b \in A_+$ such that $\overline{bAb} = \overline{b_\infty A b_\infty}$.
Then $[b]=[b_\infty]=[a]$, as desired.
\end{proof}

\begin{cor}
\label{prp:CharSoftCuClassGGP}
Let $A$ be a \ca{} with the Global Glimm Property, and let $x \in \Cu(A)$.
Then $x$ is strongly soft if and only if there exists a soft element $a \in (A\otimes\KK)_+$ with $x = [a]$.
\end{cor}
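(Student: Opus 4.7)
The plan is to reduce this to \autoref{prp:SoftStSoftEq} applied to the stabilization $A\otimes\KK$, using $\Cu(A) = \Cu(A\otimes\KK)$ and the fact that the Global Glimm Property passes to $A\otimes\KK$.

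First I would verify the relevant hypotheses for $A\otimes\KK$. Since the Global Glimm Property is characterized by $(2,\omega)$-divisibility of the Cuntz semigroup (\cite[Theorem~3.6]{ThiVil23Glimm}, cited in \autoref{pgr:GGPNSCa}) and since $\Cu(A) \cong \Cu(A\otimes\KK)$, the \ca{} $A\otimes\KK$ also has the Global Glimm Property. By \cite[Proposition~7.7]{ThiVil23arX:Soft} (recalled in \autoref{sec:Soft}), it follows that $A\otimes\KK$ has an abundance of soft elements. Moreover, $A\otimes\KK$ is stable, hence has weak stable rank one (as noted just before \autoref{prp:SoftStSoftEq}).

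For the forward direction, suppose $a \in (A\otimes\KK)_+$ is soft. Then by \cite[Proposition~4.16]{ThiVil23arX:Soft} (recalled in the discussion preceding \autoref{prp:AbsorbingSoftCAlg}), the Cuntz class $[a]$ is strongly soft. For the backward direction, suppose $x \in \Cu(A) = \Cu(A\otimes\KK)$ is strongly soft, and pick any representative $a' \in (A\otimes\KK)_+$ with $x = [a']$. Then $[a']$ is strongly soft, so applying the second part of \autoref{prp:SoftStSoftEq} to the \ca{} $A\otimes\KK$ (which has an abundance of soft elements and weak stable rank one), we obtain a completely soft element $b \in (A\otimes\KK)_+$ with $[b] = [a'] = x$. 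Since completely soft elements are in particular soft, $b$ is the desired soft representative.

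There is essentially no obstacle here — the corollary is almost a restatement of \autoref{prp:SoftStSoftEq} for the stabilization. The only minor point worth flagging is ensuring that the Global Glimm Property transfers to $A\otimes\KK$; this is immediate from the Cuntz-semigroup characterization since $\Cu$ is stable under tensoring with $\KK$, but it should be mentioned explicitly for clarity.
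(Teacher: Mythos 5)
Your proposal is correct and follows essentially the same route as the paper: transfer the Global Glimm Property to $A\otimes\KK$, deduce abundance of soft elements and weak stable rank one, and invoke \autoref{prp:SoftStSoftEq} (with \cite[Proposition~4.16]{ThiVil23arX:Soft} handling the forward direction). Your explicit unpacking of both directions, and the observation that completely soft implies soft, merely makes explicit what the paper leaves implicit in its one-line conclusion.
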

\begin{proof}
It follows from \cite[Theorem~3.6]{ThiVil23Glimm} that $A\otimes\KK$ has the Global Glimm Property.
Hence, $A\otimes\KK$ has an abundance of soft elements by \cite[Proposition~7.7]{ThiVil23Glimm}.
Further, $A\otimes\KK$ has weak stable rank one by \cite[Lemma~4.3.2]{BlaRobTikTomWin12AlgRC}.
Now the result follows from \autoref{prp:SoftStSoftEq}.
\end{proof}

\begin{pgr}[The strongly soft subsemigroup]
\label{pgr:SoftSubCu} 
Given a \CuSgp{} $S$, we let $S_\soft$ denote the set of strongly soft elements in $S$. 
By \autoref{prp:CharSoftCuClassGGP}, given a \ca{} $A$ with the Global Glimm Property, we have
\[
\Cu(A)_\soft = \big\{ [a] : a \in (A\otimes\KK)_+ \text{ soft} \big\}.
\]

In particular, if $A$ is stably finite, simple, and unital, it follows from \cite[Proposition~4.16]{ThiVil23arX:Soft} that the subset $\Cu (A)_\soft\setminus\{ 0\}$ coincides with $\Cu_{+} (A)$, the set of Cuntz classes of purely positive elements as introduced in \cite[Definition~2.1]{PerTom07Recasting}, see also \cite[Definition~3.8]{AsaGolPhi21RadCompCrProd}.
\end{pgr}

Given a \CuSgp{} $S$, a \emph{sub-\CuSgp{}} in the sense of \cite[Definition~4.1]{ThiVil21DimCu2} is a submonoid $T \subseteq S$ that is a \CuSgp{} for the inherited order, and such that the inclusion map $T \to S$ is a \CuMor{}. 

\begin{prp}
\label{prp:SoftPartAxioms}
Let $S$ be a $(2,\omega)$-divisible \CuSgp{} that satisfies \axiomO{5}.
Then, $S_\soft$ is a sub-\CuSgp{} that also satisfies \axiomO{5}.

If $S$ also satisfies \axiomO{6} (respectively \axiomO{7}), then so does $S_\soft$.
\end{prp}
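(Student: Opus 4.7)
The plan is to verify, in sequence, (i) that $S_\soft$ is a submonoid of $S$ closed under suprema of increasing sequences, (ii) that the axioms \axiomO{1}--\axiomO{4} hold and that the inclusion $S_\soft \hookrightarrow S$ preserves $\ll$, and (iii) that \axiomO{5} (and, when applicable, \axiomO{6} and \axiomO{7}) pass from $S$ to $S_\soft$. The key external input is a density result for strongly soft elements in $(2,\omega)$-divisible Cu-semigroups satisfying \axiomO{5}, available from \cite{ThiVil23arX:Soft}: for every $x' \ll x$ in $S$ there exists $y \in S_\soft$ with $x' \ll y \ll x$.

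For (i), the element $0$ is trivially strongly soft. For closure under addition, given $x, y \in S_\soft$ and $z' \ll z'' \ll x+y$, \axiomO{2} in $S$ lets one write $x + y = \sup_n(x_n + y_n)$ with $x_n \ll x$ and $y_n \ll y$, so $z'' \leq x_m + y_m$ for some $m$. Strong softness of $x$ and $y$ then supplies witnesses $t_x, t_y$, and $t := t_x + t_y$ works: by \axiomO{3}, $z' + t \leq (x_m + t_x) + (y_m + t_y) \ll x + y$, while \axiomO{4} gives $z' \ll z'' \leq x_m + y_m \leq \infty t_x + \infty t_y = \infty t$. Closure under suprema of increasing sequences is immediate, since any $z' \ll \sup_n x_n$ lies $\ll$-below some $x_m$ and inherits its witness.

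For (ii), \axiomO{1} follows from closure under increasing suprema, while \axiomO{3} and \axiomO{4} are inherited. Axiom \axiomO{2} in $S_\soft$ is verified by using the density result to interlace strongly soft approximants into any $\ll$-increasing sequence in $S$ converging to a given $x \in S_\soft$. A diagonal form of the same argument shows that the inclusion preserves $\ll$: given $x \ll y$ in $S_\soft$ and $(z_n)_n$ increasing in $S$ with $y \leq \sup_n z_n$, one refines to an increasing sequence $(\tilde w_n)_n$ in $S_\soft$ with $\tilde w_n \leq z_{n+1}$ and $\sup_n \tilde w_n \geq y$, so $x \leq \tilde w_m \leq z_{m+1}$ for some $m$.

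For (iii), \axiomO{5} is the main obstacle. Given $x, y, z \in S_\soft$ with $x+y \leq z$ and $x' \ll x$, $y' \ll y$ in $S_\soft$, axiom \axiomO{5} in $S$ provides $c \in S$ with $y' \ll c$ and $x' + c \leq z \leq x + c$, but $c$ need not be strongly soft. I would interpolate $y' \ll y'' \ll c$ with $y'' \in S_\soft$ via density, and then re-apply \axiomO{5} together with strong softness of $z$ to extract a strongly soft $c^\sharp$ satisfying $y'' \ll c^\sharp$, $x' + c^\sharp \leq z$, and $z \leq x + c^\sharp$. The delicate part is the lower bound $z \leq x + c^\sharp$: soft lifts naturally sit below their targets, so obtaining this inequality requires leveraging strong softness of $z$, not merely softening $c$ below. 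Axioms \axiomO{6} and \axiomO{7} follow the same softification template: apply the axiom in $S$ to obtain auxiliary elements $v, w$ (or $z$), then upgrade them to $S_\soft$ via density and a further \axiomO{5}-based correction. The entire technical weight of the proposition thus sits in the simultaneous threading of the two sandwich inequalities of \axiomO{5}.
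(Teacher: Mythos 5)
Your step (i) is essentially correct (the paper simply outsources it, citing \cite[Propositions~5.6 and~7.7]{ThiVil23arX:Soft}), but your key input is misstated in a way that matters. The ``density result'' you invoke --- for every $x' \ll x$ in $S$ there exists $y \in S_\soft$ with $x' \ll y \ll x$ --- is false for general $x$. Take $S = \Cu(\mathcal{Z}) \cong \NN \sqcup (0,\infty]$, which is $(2,\omega)$-divisible and satisfies \axiomO{5}, and let $x' = x = [1]$ be the compact class of the unit: any $y$ with $[1] \ll y \ll [1]$ satisfies $[1] \leq y \leq [1]$, hence $y = [1]$, which is not strongly soft (a nonzero compact strongly soft element would give $y + t \leq y$ with $y \ll \infty t$, forcing $y = 0$ here). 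What \cite[Proposition~7.7]{ThiVil23arX:Soft} actually provides is $y \in S_\soft$ with $2y \leq x \leq \infty y$: the soft element sits fully below $x$, it does not $\ll$-dominate a prescribed $x'$. The interpolation version you want holds only when $x$ itself is strongly soft (this is part of the sub-\CuSgp{} statement of \cite[Proposition~5.6]{ThiVil23arX:Soft}), so your uses in (ii) are repairable, but your interpolation $y' \ll y'' \ll c$ below the arbitrary \axiomO{5}-element $c$ in step (iii) is unjustified as written; one should instead interpolate $y' \ll y'' \ll y$ inside $S_\soft$ \emph{before} invoking \axiomO{5}.

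The more serious gap is the one you yourself flag and then do not close: producing a strongly soft $c^\sharp$ with $x' + c^\sharp \leq z \leq x + c^\sharp$. ``Re-apply \axiomO{5} together with strong softness of $z$'' is a hope, not a construction, and aiming at the literal two-sided sandwich with a single soft element is the wrong target --- no mechanism you describe produces the exact lower bound $z \leq x + c^\sharp$. The paper sidesteps this: by \cite[Theorem~4.4(1)]{AntPerThi18TensorProdCu} it suffices to verify a weakened $\ll$-form of \axiomO{5}, asking for a \emph{pair} $c' \ll c$ in $S_\soft$ with $x' + c \ll z$ and $z' \ll x + c'$ for a pre-chosen $z' \ll z$. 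One then interpolates $z' \ll v' \ll v \ll z$ with $v', v \in S_\soft$, applies \axiomO{5} in $S$ to get $b$ with $x' + b \leq v' \leq x + b$ and $y' \ll b$, takes a witness $t \in S_\soft$ with $v' + t \leq v \leq \infty t$ (\cite[Proposition~4.13]{ThiVil23arX:Soft}), and sets $c := b + t$; since $b \leq v' \leq \infty t$, the absorption result \cite[Theorem~4.14(2)]{ThiVil23arX:Soft} makes $c$ strongly soft, and then $x' + c \leq v' + t \leq v \ll z$ while $z' \ll v' \leq x + c$. These two devices --- the reformulated axiom, and making $c$ soft by \emph{absorption} $b + t$ rather than by softening $c$ from below --- are exactly the missing ideas in your sketch, and your ``same template'' claim for \axiomO{6} and \axiomO{7} inherits the same gap.
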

\begin{proof}
By \cite[Proposition~7.7]{ThiVil23arX:Soft}, if a \CuSgp{} is $(2,\omega)$-divisible and satisfies \axiomO{5}, then it has an abundance of soft elements, which then by \cite[Proposition~5.6]{ThiVil23arX:Soft} implies that its strongly soft elements form a sub-\CuSgp{}.
Thus, $S_\soft$ is a sub-\CuSgp{}.

Let us verify that $S_\soft$ satisfies \axiomO{5}.
By \cite[Theorem~4.4(1)]{AntPerThi18TensorProdCu} it suffices to show that for all $x',x,y',y,z',z \in S_\soft$ satisfying
\begin{equation}
\label{eq:SoftPartAxioms:O5Assumption}
x' \ll x, \quad
y' \ll y, \andSep
x+y \ll z' \ll z,
\end{equation}
there exist $c',c \in S_\soft$ such that
\begin{equation}
\label{eq:SoftPartAxioms:O5Conclusion}
x'+c \ll z, \quad
z' \ll x+c', \andSep 
y' \ll c' \ll c.
\end{equation}

So let $x',x,y',y,z',z \in S_\soft$ satisfy \eqref{eq:SoftPartAxioms:O5Assumption}.
Choose $v',v \in S_\soft$ such that $z' \ll v' \ll v \ll z$.
Applying \axiomO{5}, we obtain $b \in S$ such that
\[
x'+b \leq v' \leq x+b, \andSep
y' \ll b.
\]

Using that $v' \ll v$ and that $v$ is strongly soft, we apply \cite[Proposition~4.13]{ThiVil23arX:Soft} to find $t \in S_\soft$ such that $v'+t \leq v \leq \infty t$.
Set $c := b+t$.
Since $b \leq v' \leq v \leq \infty t$ and $t$ is strongly soft, we have $c \in S_\soft$ by \cite[Theorem~4.14(2)]{ThiVil23arX:Soft}.
Thus, one gets
\[
x'+c
= x'+b+t
\leq v'+t
\leq v
\ll z,
\]
and
\[
z' \ll v' \leq x+b \leq x+c, \andSep
y' \ll b \leq c.
\]

Using also that $S_\soft$ is a \CuSgp{} and $c \in S_\soft$, we can find $c' \in S_\soft$ such that
\[
c' \ll c, \quad
z' \ll x+c', \andSep
y' \ll c'.
\]

This shows that $c'$ and $c$ satisfy \eqref{eq:SoftPartAxioms:O5Conclusion}, as desired.

That $S_\soft$ satisfies \axiomO{6} (respectively \axiomO{7}) whenever $S$ does is proven analoguously.
\end{proof}

\section{Separative \texorpdfstring{\CuSgp{s}}{Cu-semigroups}}
\label{sec:SepCu}

We introduce in \autoref{dfn:LefSofSep} the notion of left-soft separativity, a weakening of weak cancellation (\autoref{pgr:StR1}) that is satisfied in the Cuntz semigroup of every \ca{} with stable rank one or strict comparison of positive elements; 
see \autoref{prp:WCimpLSSep} and \autoref{prp:StCompLSSep} respectively. 
We also prove in \autoref{prp:PerfCond} that, among strongly soft elements, the notions of unperforation and almost unperforation coincide.

\begin{pgr}[Cuntz semigroups of stable rank one \ca{s}]
\label{pgr:StR1}
Let $A$ be a stable rank one \ca{}. 
As shown in \cite[Theorem~4.3]{RorWin10ZRevisited}, the Cuntz semigroup $\Cu (A)$ satisfies a cancellation property termed \emph{weak cancellation}: 
If $x,y,z \in \Cu(A)$ satisfy $x+z\ll y+z$, then $x\ll y$.

If $A$ is also separable, then $\Cu(A)$ is \emph{inf-semilattice ordered}, that is, for every pair of elements $x,y \in \Cu(A)$ their infimum $x \wedge y$ exists, and for every $x,y,z \in \Cu(A)$ one has $(x+z) \wedge (y+z) = (x \wedge y) +z$; 
see \cite[Theorem~3.8]{AntPerRobThi22CuntzSR1}.
\end{pgr}

As defined in \cite{ThiVil21arX:ZeroDimCu}, a \CuSgp{} is \emph{separative} if $x \ll y$ whenever $x+t\ll y+t$ with $t\ll\infty x,\infty y$. 
This and other cancellation properties will be studied in more detail in \cite{ThiVil21arX:ZeroDimCu}.

For the results in this paper, we will need the following tailored definition:

\begin{dfn}
\label{dfn:LefSofSep}
We say that a \CuSgp{} $S$ is \emph{left-soft separative} if, for any triple of elements $y,t\in S$ and $x\in S_\soft$ satisfying 
\[
x+t \ll y+t,\quad 
t \ll \infty x,\andSep 
t \ll \infty y,
\]
we have $x\ll y$.
\end{dfn}

\begin{prp}
\label{prp:WCimpLSSep}
Every weakly cancellative \CuSgp{} is separative, and every separative \CuSgp{} is left-soft separative.

In particular, the Cuntz semigroup of every stable rank one \ca{} is left-soft separative.
\end{prp}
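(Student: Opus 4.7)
My plan is to observe that the three conditions form a chain of successive weakenings of the cancellation hypothesis, so each implication reduces to noting which hypotheses are being added or which elements are being restricted.

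For the first implication, suppose $S$ is weakly cancellative. To verify separativity, fix $x,y,t \in S$ with $x+t \ll y+t$, $t \ll \infty x$, and $t \ll \infty y$. Weak cancellation delivers $x \ll y$ from the single assumption $x+t \ll y+t$, so the two extra conditions on $t$ are simply unused, and separativity follows. No work is needed beyond unpacking the definitions.

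For the second implication, suppose $S$ is separative and take $y,t \in S$ together with $x \in S_\soft$ satisfying $x+t \ll y+t$, $t \ll \infty x$, and $t \ll \infty y$. Since $S_\soft \subseteq S$, these are exactly the hypotheses of separativity applied to $x,y,t$ viewed as elements of $S$, and we conclude $x \ll y$. Again, the point is purely that left-soft separativity quantifies over a subset of the triples considered by separativity.

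For the final ``in particular'' clause, I invoke \autoref{pgr:StR1}, where it is recalled that $\Cu(A)$ is weakly cancellative whenever $A$ has stable rank one. Chaining the two implications just established yields left-soft separativity. There is no substantial obstacle here: the content of the proposition is that the three cancellation conditions are arranged in decreasing order of strength, so the statement follows by inspection of the definitions, and its role in the paper is to justify later applying the weaker notion of left-soft separativity (which will be needed in the results of \autoref{sec:SepCu}) to any stable rank one \ca{}.
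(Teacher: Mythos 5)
Your proposal is correct and matches the paper's proof, which likewise observes that both implications follow directly from the definitions (weak cancellation ignores the extra hypotheses on $t$, and left-soft separativity restricts the quantifier to $x \in S_\soft$) and then cites \cite[Theorem~4.3]{RorWin10ZRevisited} for weak cancellation of $\Cu(A)$ when $A$ has stable rank one. Your write-up merely spells out what the paper compresses into ``follows directly from the definitions,'' so there is nothing to correct.
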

\begin{proof}
It follows directly from the definitions that weak cancellation is stronger than left-soft separativitiy.
By \cite[Theorem~4.3]{RorWin10ZRevisited}, the Cuntz semigroup of a stable rank one \ca{} is weakly cancellative.
\end{proof}

\begin{lma}
\label{rmk:LSSepEqu}
Let $S$ be a $(2,\omega )$-divisible \CuSgp{} satisfying \axiomO{5}. 
Then, $S$ is left-soft separative if and only if for all $y,t',t\in S$ and $x\in S_\soft$ satisfying 
\[
x+t\leq y+t',\quad 
t'\ll t,\quad 
t'\ll \infty y,\andSep 
t'\ll \infty x,
\]
we have $x \leq y$.
\end{lma}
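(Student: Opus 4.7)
The plan is to prove both implications, relying on \autoref{prp:SoftPartAxioms} so that every $x \in S_\soft$ admits a $\ll$-increasing approximation $(x_n)_n$ in $S_\soft$ that remains $\ll$-increasing in $S$, together with the fact that $\infty z = \sup_n \infty z_n$ whenever $(z_n)_n$ is a $\ll$-increasing sequence with supremum $z$ (which follows from interchanging suprema in $\sup_n \sup_m m z_n$).

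For the direction ``left-soft separative $\Rightarrow$ stated condition'', assume $x + t \leq y + t'$ with $t' \ll t$, $t' \ll \infty x$, $t' \ll \infty y$, and $x \in S_\soft$. Choose a $\ll$-increasing sequence $(x_n)_n$ in $S_\soft$ with $\sup_n x_n = x$. For each $n$, (O3) gives $x_{n+1} + t' \ll x + t$, and composing with the hypothesis $x + t \leq y + t'$ yields $x_{n+1} + t' \ll y + t'$. To secure $t' \ll \infty x_{n+1}$ for $n$ large, pick an interpolant $s$ with $t' \ll s \ll \infty x = \sup_n \infty x_n$; then $s \leq \infty x_{n_0}$ for some $n_0$, and thus $t' \ll s \leq \infty x_{n+1}$ whenever $n+1 \geq n_0$. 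Applying left-soft separativity (with the same $t'$ on both sides) to $x_{n+1}, y, t'$ gives $x_{n+1} \ll y$, and hence $x = \sup_n x_n \leq y$.

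For the reverse direction, assume $x + t \ll y + t$, $t \ll \infty x$, $t \ll \infty y$, with $x \in S_\soft$. Write $y = \sup_m y_m$ and $t = \sup_k t_k$ as suprema of $\ll$-increasing sequences. By (O4), $y + t = \sup_{m,k} (y_m + t_k)$. Pick an interpolant $z$ with $x + t \ll z \ll y + t$; then $z \leq y_m + t_k$ for some $m, k$, so setting $t'' := t_k$ gives $x + t \leq y_m + t''$ with $t'' \ll t$. Since $\infty y = \sup_{m'} \infty y_{m'}$ and $t \ll \infty y$, there is $m'$ with $t \leq \infty y_{m'}$; taking $m_0 := \max\{m, m'\}$, one has $x + t \leq y_{m_0} + t''$ and $t'' \ll t \leq \infty y_{m_0}$, so $t'' \ll \infty y_{m_0}$. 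Also $t'' \ll t \ll \infty x$. The stated condition applied to $x, y_{m_0}, t, t''$ yields $x \leq y_{m_0}$, and since $y_{m_0} \ll y$ we conclude $x \ll y$.

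The main obstacle is the careful bookkeeping between $\ll$ and $\leq$ when passing from the global hypothesis $t \ll \infty x$ (or $t' \ll \infty x$) to the analogous way-below statement with respect to a member of an approximating sequence $\infty x_n$ (or $\infty y_m$). The essential trick on both sides is to interpose an element $s$ with $t \ll s \ll \infty x$, exploit $\infty x = \sup_n \infty x_n$ together with the way-below property of $s$ to find $s \leq \infty x_n$ for some $n$, and then recover a strict $\ll$ by transitivity.
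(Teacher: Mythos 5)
Your proposal is correct and takes essentially the same approach as the paper: the forward direction is the paper's argument (approximate $x \in S_\soft$ by a $\ll$-increasing sequence in $S_\soft$ via \autoref{prp:SoftPartAxioms}, use \axiomO{3} to get $x_{n+1}+t' \ll x+t \leq y+t'$, apply left-soft separativity, and pass to the supremum), while the backward direction, which the paper dismisses as straightforward, is the routine approximation argument you spell out. If anything, you are slightly more careful than the printed proof, which applies left-soft separativity citing $t' \ll \infty x$ when $t' \ll \infty x'$ is what the definition requires; your interpolation of an element $s$ with $t' \ll s \ll \infty x = \sup_n \infty x_n$ supplies exactly this missing detail.
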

\begin{proof}
The backwards implication is straightforward to verify and even holds for general \CuSgp{s}.
To show the forward implication, assume that $S$ is left-soft separative, and let $x,y,t',t\in S$ as in the statement. 
By \autoref{prp:SoftPartAxioms}, we know that $S_\soft$ is a sub-\CuSgp{}. 
In particular, $x$ can be written as the supremum of a $\ll$-increasing sequence of strongly soft elements.

Take $x'\in S_\soft$ such that $x'\ll x$. 
We have 
\[
x'+t'\ll x+t\leq y+t',\quad 
t' \ll \infty x,\andSep 
t' \ll \infty y.
\]

By left-soft separativity, we deduce $x'\ll y$. 
Since $x$ is the supremum of such $x'$, one gets $x\leq y$, as required.
\end{proof}

\begin{lma}
\label{prp:CancelSoft}
Let $S$ be a left-soft separative, $(2,\omega)$-divisible \CuSgp{} satisfying \axiomO{5}, and let $x,t \in S_\soft$ and $y,t' \in S$ satisfy 
\[
x+t \leq y+t', \quad 
t' \ll t, \quad 
t' \ll \infty y.
\]

Then $x \leq y$.
\end{lma}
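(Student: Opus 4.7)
The plan is to reduce the statement to the reformulation of left-soft separativity provided by \autoref{rmk:LSSepEqu}, which requires $X + T \leq Y + T'$, $T' \ll T$, $T' \ll \infty Y$, $T' \ll \infty X$ and $X \in S_\soft$. The hypotheses of the lemma give us everything except $t' \ll \infty x$. To manufacture this missing condition, I would exploit the strong softness of $t$ to produce an auxiliary element $r \in S_\soft$ which can be added to $x$, providing enough bulk on the left to force the $\infty$-domination of $t'$, while being small enough (bounded by $t$) not to destroy the main inequality.

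Concretely, interpolate $t' \ll t'' \ll t$ using \axiomO{2}. Since $t \in S_\soft$, apply \cite[Proposition~4.13]{ThiVil23arX:Soft} to the relation $t'' \ll t$ to obtain $r \in S_\soft$ with
\[
t'' + r \leq t, \andSep t \leq \infty r.
\]
Adding $x$ on both sides of the first inequality and chaining with the hypothesis yields
\[
(x + r) + t'' \leq x + t \leq y + t'.
\]

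Now $x + r \in S_\soft$ because $S_\soft$ is a submonoid (\autoref{prp:SoftPartAxioms}), and
\[
t' \ll t \leq \infty r \leq \infty(x + r),
\]
while $t' \ll t''$, $t' \ll \infty y$ are immediate from our setup. Therefore \autoref{rmk:LSSepEqu} applies to $(x + r) + t'' \leq y + t'$ with $T = t''$, $T' = t'$, giving $x + r \leq y$, and since $S$ is positively ordered this forces $x \leq x + r \leq y$.

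The main obstacle is spotting the correct use of \cite[Proposition~4.13]{ThiVil23arX:Soft}: softness of $t$ on its own does not supply any information about $\infty x$, but extracting an $r \in S_\soft$ with $r \leq t - t''$ (in the weak sense $t'' + r \leq t$) and $t \leq \infty r$ simultaneously preserves the inequality (since $r$ is absorbed into what was already present as $t$) and forces $t' \ll \infty(x + r)$. Both hypotheses needed to invoke \autoref{rmk:LSSepEqu} are then met.
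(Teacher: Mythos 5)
Your proposal is correct and is essentially identical to the paper's own proof: the same interpolation $t' \ll t'' \ll t$, the same use of \cite[Proposition~4.13]{ThiVil23arX:Soft} to extract $r \in S_\soft$ with $t''+r \leq t \leq \infty r$, and the same application of \autoref{rmk:LSSepEqu} to $(x+r)+t'' \leq y+t'$ (the paper calls your $r$ by the name $s$, and justifies softness of $x+s$ via \cite[Theorem~4.14]{ThiVil23arX:Soft} rather than \autoref{prp:SoftPartAxioms}, but this is an immaterial difference). Nothing to fix.
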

\begin{proof}
Take $t''\in S$ such that $t'\ll t''\ll t$. 
Using that $t$ is strongly soft, one finds $s\in S_\soft$ such that $t''+s\leq t\leq \infty s$; 
see \cite[Proposition~4.13]{ThiVil23arX:Soft}. 
Note that, since $x$ and $s$ are strongly soft, so is $x+s$ by \cite[Theorem~4.14]{ThiVil23arX:Soft}. 
We get
\[
(x+s)+t'' = x+(s+t'') \leq x+t \leq y+t'.
\]
Further, we have $t'\ll \infty y$ and $t'\ll t''\leq \infty s\leq \infty (x+s)$.

An application of \autoref{rmk:LSSepEqu} shows that $x+s\leq y$ and, therefore, that $x\leq y$.
\end{proof}

The following result shows that three different versions of unperforation coincide for the semigroup of strongly soft elements in a \CuSgp.
Given elements~$x$ and $y$ in a partially ordered monoid, one writes $x<_sy$ if there exists $n \geq 1$ such that $(n+1)x \leq ny$;
and one writes $x \leq_p y$ if there exists $n_0 \in \NN$ such that $nx \leq ny$ for all $n \geq n_0$.
We refer to \cite[Chapter~5]{AntPerThi18TensorProdCu} for details regarding these definitions.

\begin{prp}
\label{prp:PerfCond}
Let $S$ be a \CuSgp{}. 
The following are equivalent:
\begin{itemize}
\item[(1)]
$S_\soft$ is unperforated: 
If $x,y \in S_\soft$ and $n \geq 1$ satisfy $nx \leq ny$, then $x \leq y$.
\item[(2)] 
$S_\soft$ is nearly unperforated: 
If $x,y \in S_\soft$ satisfy $x \leq_p y$, then $x \leq y$.
\item[(3)] 
$S_\soft$ is almost unperforated:
If $x,y \in S_\soft$ satisfy $x <_s y$, then $x \leq y$.
\end{itemize}
\end{prp}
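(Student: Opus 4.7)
The plan is to prove the equivalences via the chain (1)$\Rightarrow$(2), (1)$\Rightarrow$(3), (3)$\Rightarrow$(1), and (2)$\Rightarrow$(3). The implications (1)$\Rightarrow$(2) and (1)$\Rightarrow$(3) are immediate from the definitions (noting in particular that $(n+1)x\leq ny$ forces $nx\leq ny$). Both nontrivial directions rest on the same auxiliary observation, which I will call the \emph{shrinking lemma}: for every $x\in S_\soft$ and every $x'\ll x$ there exists $k\geq 1$ with $(k+1)x'\leq kx$. To prove it, apply \cite[Proposition~4.13]{ThiVil23arX:Soft} to obtain $t\in S_\soft$ with $x'+t\leq x$ and $x\leq \infty t$; since $x'\ll x\leq \sup_m mt$, compactness of $\ll$ gives $k$ with $x'\leq kt$, whence $(k+1)x'=kx'+x'\leq k(x'+t)\leq kx$.

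For (3)$\Rightarrow$(1), assume $nx\leq ny$ with $x,y\in S_\soft$ and $n\geq 1$. Using that $S_\soft$ is a sub-\CuSgp{} (\autoref{prp:SoftPartAxioms}), pick $x'\in S_\soft$ with $x'\ll x$ and apply the shrinking lemma to find $k$ with $(k+1)x'\leq kx$. Multiplying by $n$ yields $n(k+1)x'\leq nkx\leq nky$; since $nk+1\leq nk+n$, this gives $(nk+1)x'\leq (nk)y$, that is, $x'<_s y$. Almost unperforation then gives $x'\leq y$, and taking the supremum over such $x'$ produces $x\leq y$.

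For (2)$\Rightarrow$(3), assume $(n+1)x\leq ny$ with $x,y\in S_\soft$ and $n\geq 1$, and pick $x'\in S_\soft$ with $x'\ll x$. The goal is to verify $x'\leq_p y$. For any $k\geq n(n+1)$, write $k=\ell(n+1)+r$ with $0\leq r\leq n$; one checks that $\ell\geq n$. Then
\[
kx' = \ell(n+1)x' + rx' \leq \ell n y + rx',
\]
and combining with $rx'\leq rx\leq nx\leq ny\leq (\ell+r)y$ (the last inequality using $\ell+r\geq n$) gives $kx'\leq \ell n y+(\ell+r)y=ky$. Near unperforation then delivers $x'\leq y$, and taking suprema yields $x\leq y$. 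The main obstacle I anticipate is precisely this combinatorial bookkeeping: the integer remainder $r$ modulo $n+1$ must be absorbed by the slack $\ell y$ afforded jointly by the multiplicative hypothesis and by softness, and selecting the threshold $n(n+1)$ so that every residue class is covered requires care.
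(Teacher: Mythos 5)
Your chain of implications is logically complete, and two of your three ingredients are sound: the ``shrinking lemma'' is correct (it is precisely \cite[Proposition~4.5]{ThiVil23arX:Soft}, that strongly soft elements are functionally soft in the sense of \cite[Definition~5.3.1]{AntPerThi18TensorProdCu}; in fact you do not even need Proposition~4.13 for it, since the defining property of strong softness already yields $t$ with $x'+t\ll x$ and $x'\ll\infty t$, which is all your computation uses), and the arithmetic in both nontrivial directions checks out. But there is a genuine gap in (3)$\Rightarrow$(1): you invoke \autoref{prp:SoftPartAxioms} to choose elements $x'\in S_\soft$ with $x'\ll x$ and with supremum $x$. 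That proposition requires $S$ to be $(2,\omega)$-divisible and to satisfy \axiomO{5}, whereas \autoref{prp:PerfCond} assumes only that $S$ is a \CuSgp{}. Without those hypotheses, a strongly soft element need not be approximated from below by strongly soft elements: in $\Cu(\mathbb{C})=\NNbar$ the element $\infty$ is strongly soft, but the only strongly soft element way-below it is $0$, so your final supremum recovers $0$, not $x$. Since almost unperforation of $S_\soft$ only lets you compare \emph{soft} pairs, you cannot instead run the argument with arbitrary $x'\ll x$, and deriving $x<_s y$ for the full element $x$ from $nx\leq ny$ is not possible by these manipulations. The paper avoids this entirely by going through functionals: $nx\leq ny$ gives $\widehat{x}\leq\widehat{y}$, strong softness gives functional softness by \cite[Proposition~4.5]{ThiVil23arX:Soft}, and \cite[Theorem~5.3.12]{AntPerThi18TensorProdCu} then yields $x\leq y$ without any way-below approximation inside $S_\soft$.

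Two smaller remarks. First, your (2)$\Rightarrow$(3) is correct but the detour through $x'$ is both unnecessary and infected by the same unjustified approximation step: the identical division-with-remainder computation applied to $x$ itself shows that $x<_s y$ implies $x\leq_p y$ in any positively ordered monoid (this is the content of \cite[Proposition~5.6.3]{AntPerThi18TensorProdCu}, which the paper cites for (1)$\Rightarrow$(2)$\Rightarrow$(3)), and near unperforation then applies directly to the pair $x,y\in S_\soft$; note also that this direction never uses your shrinking lemma, contrary to what you announce. Second, under the hypotheses where the paper actually applies \autoref{prp:PerfCond} (namely $S$ $(2,\omega)$-divisible satisfying \axiomO{5}, as in \autoref{prp:SigmaCuMor}), your (3)$\Rightarrow$(1) argument does go through and gives a pleasantly elementary, functional-free proof; it just does not prove the proposition in the stated generality.
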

\begin{proof}
In general, (1)~implies~(2), which implies~(3);
see \cite[Proposition~5.6.3]{AntPerThi18TensorProdCu}.
To verify that~(3) implies~(1), let $x,y\in S_\soft$ and $n\geq 1$ satisfy $nx\leq ny$.
Then $\widehat{x} \leq \widehat{y}$;
see \autoref{pgr:FuncRank}.
By \cite[Proposition~4.5]{ThiVil23arX:Soft}, $x$ is functionally soft.
Thus, we deduce from \cite[Theorem~5.3.12]{AntPerThi18TensorProdCu} that $x \leq y$, as desired.
\end{proof}

\begin{lma}
\label{prp:AUnpLSSep}
Every almost unperforated \CuSgp{} satisfying \axiomO{5} is left-soft separative.
\end{lma}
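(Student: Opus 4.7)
The plan is to combine a functional-theoretic argument with \cite[Theorem~5.3.12]{AntPerThi18TensorProdCu}, which, in an almost unperforated \CuSgp{} satisfying \axiomO{5}, asserts that a functionally soft element $x$ satisfies $x\leq y$ whenever $\widehat{x}\leq\widehat{y}$ pointwise on functionals. Since strongly soft elements are functionally soft by \cite[Proposition~4.5]{ThiVil23arX:Soft}, the task reduces to producing some $y'\ll y$ with $\widehat{x}\leq\widehat{y'}$; then $x\leq y'\ll y$ will immediately give the desired $x\ll y$.

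First I would extract an inequality with extra room built into both sides. Using \axiomO{2}, choose $\ll$-increasing sequences $(y_m)_m$ and $(t_n)_n$ with suprema $y$ and $t$, so that $(y_m+t_n)_{m,n}$ is $\ll$-increasing with supremum $y+t$. From $x+t\ll y+t$ one gets indices $m_*,n_*$ with $x+t\leq y_{m_*}+t_{n_*}$. The key observation is that $(ky_k)_k$ is $\ll$-increasing with supremum $\infty y$, so $t_{n_*}\ll t\ll\infty y$ produces some $m'$ with $t_{n_*}\leq m'y_{m'}\ll (m'+1)y_{m'+1}\leq\infty y_{m'+1}$, hence $t_{n_*}\ll\infty y_{m'+1}$. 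Enlarging $m_*$ to $m_0:=\max(m_*,m'+1)$ and setting $y':=y_{m_0}$, $t':=t_{n_*}$, the inequality $x+t\leq y'+t'$ now comes equipped with $y'\ll y$, $t'\ll t$, $t'\ll\infty x$, and crucially $t'\ll\infty y'$.

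Next I would establish $\lambda(x)\leq\lambda(y')$ for every functional $\lambda$ on $S$. Applying $\lambda$ to $x+t\leq y'+t'$ yields $\lambda(x)+\lambda(t)\leq\lambda(y')+\lambda(t')$. If $\lambda(t)<\infty$, then $\lambda(t')\leq\lambda(t)<\infty$ permits cancellation, giving $\lambda(x)\leq\lambda(y')$. If $\lambda(t)=\infty$, the right-hand side is forced to equal $\infty$, so either $\lambda(y')=\infty$ or $\lambda(t')=\infty$; in the latter case $t'\ll\infty y'$ supplies an integer $N$ with $t'\leq Ny'$, whence $\lambda(y')=\infty$ as well. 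Either way $\lambda(x)\leq\lambda(y')$, so $\widehat{x}\leq\widehat{y'}$.

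Putting these together, the functional softness of $x$ combined with \cite[Theorem~5.3.12]{AntPerThi18TensorProdCu} will give $x\leq y'$, and since $y'\ll y$ we conclude $x\ll y$. The hard part will be the case $\lambda(t)=\infty$ in the functional step: running the argument directly with $y$ in place of $y'$ only yields $\widehat{x}\leq\widehat{y}$ and hence $x\leq y$, which is insufficient for a $\ll$-conclusion on a non-compact element (a nonzero strongly soft element is never way-below itself). The key technical maneuver is therefore the enlargement from $y_{m_*}$ to some $y_{m_0}\ll y$ arranged so that $t_{n_*}\ll\infty y_{m_0}$; this is precisely what allows the right-hand value $\lambda(y')$ to absorb any infinite $\lambda(t')$, upgrading the conclusion from $x\leq y$ to $x\leq y'\ll y$ and thus to $x\ll y$.
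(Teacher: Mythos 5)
Your proof is correct, and its overall architecture matches the paper's: both arguments replace $y$ by a suitable truncation $y' \ll y$ arranged so that a tail of $t$ is dominated by $\infty y'$, establish the rank inequality $\widehat{x} \leq \widehat{y'}$, and then conclude $x \leq y' \ll y$ from \cite[Proposition~4.5]{ThiVil23arX:Soft} (strongly soft implies functionally soft) together with \cite[Theorem~5.3.12]{AntPerThi18TensorProdCu}. The genuine difference is the middle step. The paper obtains $\widehat{x} \leq \widehat{y'}$ by citing \cite[Proposition~5.6.8(ii)]{AntPerThi18TensorProdCu}: from $x+t \ll y'+t$ and $t \ll \infty y'$ it gets the algebraic relation $x \leq_p y'$, hence $kx \leq ky'$ for some $k$, and only then passes to functionals. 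You instead derive the functional inequality directly from $x+t \leq y'+t'$ by the case analysis on $\lambda(t)$; your supporting computations are all sound --- the diagonal extraction from $x+t \ll y+t$, the fact that $(ky_k)_k$ is $\ll$-increasing with supremum $\infty y$ (via \axiomO{3} and \axiomO{4}), and the absorption $t' \leq Ny'$ forcing $\lambda(y')=\infty$ when $\lambda(t')=\infty$. Your route is more elementary and self-contained, using only \axiomO{2}--\axiomO{4} before the final citation, so \axiomO{5} enters your argument at most through Theorem~5.3.12; it also makes visible that the hypothesis $t \ll \infty x$ is never used (the paper's proof does not use it either). What the paper's citation buys is the stronger algebraic conclusion $kx \leq ky'$ en route, and a shorter proof on the page.

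One small slip in your closing commentary: it is not true that a nonzero strongly soft element is never way-below itself. A compact idempotent is strongly soft and satisfies $x \ll x$; for instance $\infty$ in the \CuSgp{} $\{0,\infty\}$. What is true is that a strongly soft element $x$ with $x \ll x$ must satisfy $2x = x$: the definition yields $t$ with $x+t \leq x$ and $x \leq \infty t$, whence $x + \infty t \leq x$ and so $2x \leq x + \infty t \leq x$. This does not affect your proof, since the point it was meant to support stands anyway --- $x \leq y$ does not imply $x \ll y$ in general, so the upgrade to $y' \ll y$ is needed exactly as you implement it.
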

\begin{proof}
Let $S$ be an almost unperforated \CuSgp{} satisfying \axiomO{5}. 
To verify that~$S$ is left-soft separative, let $y,t\in S$ and $x\in S_\soft$ satisfy $x+t\ll y+t$ and $t\ll\infty x,\infty y$. 
Choose $y'\in S$ such that
\[
x+t\ll y'+t, \quad t\ll\infty y', \andSep y'\ll y.
\]

Then $x \leq_p y'$ by \cite[Proposition~5.6.8(ii)]{AntPerThi18TensorProdCu}.
In particular, there exists $k\in\NN$ such that $kx\leq ky'$, and thus $\widehat{x} \leq \widehat{y'}$;
see \autoref{pgr:FuncRank}.
By \cite[Proposition~4.5]{ThiVil23arX:Soft}, $x$ is functionally soft.
Using that $S$ is almost unperforated, we obtain that $x \leq y' \ll y$, by \cite[Theorem~5.3.12]{AntPerThi18TensorProdCu}.
\end{proof}

A \ca{} $A$ is said to have \emph{strict comparison of positive elements} if, for all $a,b \in (A\otimes\KK)_+$ and $\varepsilon>0$, one has that $d_\tau(a) \leq (1-\varepsilon)d_\tau(b)$ for all $\tau$ implies $a \precsim b$.

\begin{prp}
\label{prp:StCompLSSep}
Let $A$ be a \ca{} with strict comparison of positive elements. 
Then $\Cu (A)$ is left-soft separative.
\end{prp}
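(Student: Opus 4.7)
The plan is to reduce the statement to a known abstract result by showing that strict comparison of positive elements forces $\Cu(A)$ to be almost unperforated. Once this is established, left-soft separativity follows immediately from \autoref{prp:AUnpLSSep}, since $\Cu(A)$ always satisfies \axiomO{5}.

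To verify almost unperforation, I would take $x, y \in \Cu(A)$ with $x <_s y$, that is, $(n+1)x \leq ny$ for some $n \geq 1$. Picking any representatives $a, b \in (A \otimes \KK)_+$ of $x$ and $y$ and applying an arbitrary $\tau \in \QT(A)$ (translated through the isomorphism $\QT(A) \cong F(\Cu(A))$ of \cite{EllRobSan11Cone}) to both sides of this inequality yields $(n+1)\, d_\tau(a) \leq n\, d_\tau(b)$. Setting $\varepsilon := \tfrac{1}{n+1}$, I would then check that $d_\tau(a) \leq (1-\varepsilon)\, d_\tau(b)$ holds for every $\tau$: when $d_\tau(b) < \infty$ this follows by dividing; when $d_\tau(b) = \infty$ the inequality is trivial since $(1-\varepsilon)\cdot\infty = \infty$. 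Strict comparison of positive elements then gives $a \precsim b$, i.e., $x \leq y$. Thus $\Cu(A)$ is almost unperforated, and \autoref{prp:AUnpLSSep} finishes the argument.

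The only mild point to verify is the dichotomy between $d_\tau(b) < \infty$ and $d_\tau(b) = \infty$ in the translation step, which is harmless once one fixes the conventions for multiplication by infinity. I do not anticipate any real obstacle; indeed, the implication ``strict comparison implies almost unperforation of the Cuntz semigroup'' is essentially a classical result of R{\o}rdam, and the genuinely new content of the statement is absorbed into \autoref{prp:AUnpLSSep}.
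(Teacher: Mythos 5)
Your proposal is correct and takes essentially the same route as the paper: the paper's proof likewise reduces to \autoref{prp:AUnpLSSep} via almost unperforation, simply citing \cite[Proposition~6.2]{EllRobSan11Cone} for the equivalence of strict comparison and almost unperforation of $\Cu(A)$, whereas you re-prove by hand the (easy) direction of that equivalence that is actually needed. Your functional-calculation with $\varepsilon = \tfrac{1}{n+1}$, including the dichotomy $d_\tau(b) < \infty$ versus $d_\tau(b) = \infty$, checks out, so the two arguments differ only in whether this standard implication is cited or verified directly.
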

\begin{proof}
A \ca{} has strict comparison of positive elements if and only if its Cuntz semigroup is almost unperforated; 
see \cite[Proposition~6.2]{EllRobSan11Cone}. 
Since every Cuntz semigroup satisfies \axiomO{5}, the result follows from \autoref{prp:AUnpLSSep}.
\end{proof}

Since every $\mathcal{Z}$-stable \ca{} has strict comparison of positive elements (see \cite[Theorem~4.5]{Ror04StableRealRankZ}), one gets the following:

\begin{cor}
The Cuntz semigroup of every $\mathcal{Z}$-stable \ca{} is left-soft separative.
\end{cor}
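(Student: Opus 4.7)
The plan is to deduce this corollary as an immediate consequence of the preceding \autoref{prp:StCompLSSep}, chained with R{\o}rdam's theorem cited in the remark just above the corollary. Specifically, given a $\mathcal{Z}$-stable \ca{} $A$, I would first invoke \cite[Theorem~4.5]{Ror04StableRealRankZ} to conclude that $A$ has strict comparison of positive elements. Then \autoref{prp:StCompLSSep} applies verbatim to yield that $\Cu(A)$ is left-soft separative.

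Since \autoref{prp:StCompLSSep} is itself proved by combining Elliott--Robert--Santiago's characterization of strict comparison via almost unperforation of $\Cu(A)$ with \autoref{prp:AUnpLSSep} (which shows that almost unperforation together with \axiomO{5} implies left-soft separativity), the entire chain is: $\mathcal{Z}$-stability $\Rightarrow$ strict comparison $\Rightarrow$ almost unperforation of $\Cu(A)$ $\Rightarrow$ left-soft separativity. I do not see any obstacle here worth naming; the corollary is essentially a one-line application of the previous proposition. Accordingly, the write-up should simply point to these two references without reproducing any of the argument.
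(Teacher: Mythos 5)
Your proposal is correct and matches the paper exactly: the corollary is stated immediately after the remark that every $\mathcal{Z}$-stable \ca{} has strict comparison of positive elements by \cite[Theorem~4.5]{Ror04StableRealRankZ}, and then follows at once from \autoref{prp:StCompLSSep}. Your further unpacking of \autoref{prp:StCompLSSep} via almost unperforation and \autoref{prp:AUnpLSSep} is also faithful to the paper's argument, so nothing more is needed.
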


\section{Ranks and soft elements}
\label{sec:SoftRanks}

Given a $(2,\omega)$-divisible \CuSgp{} $S$ satisfying \axiomO{5}-\axiomO{7} (for example, the Cuntz semigroup of a \ca{} with the Global Glimm Property) and an element $x\in S$, we show in \autoref{prp:SoftRankS} that there exists a strongly soft element~$w$ below~$x$ which agrees with $x$ at the level of functionals, that is, the rank of $x$ coincides with the rank of $w$; see \autoref{pgr:FuncRank}. 
Paired with \autoref{prp:SoftStSoftEq}, this implies that the rank of any positive element in a \ca{} satisfying the Global Glimm Property is the rank of a soft element (\autoref{prp:SoftRankCuGGP}).

Using \autoref{prp:SoftRankS}, we also prove that $F(S)$, the set of functionals on $S$, is homeomorphic to $F(S_\soft )$; 
see \autoref{prp:FScongFSSoft}.

\begin{pgr}[Functionals and ranks]
\label{pgr:FuncRank}
Given a \CuSgp{} $S$, we will denote by $F(S)$ the set of its \emph{functionals}, that is to say, the set of monoid morphisms $S\to [0,\infty ]$ that preserve the order and suprema of increasing sequences.
If $S$ satisfies \axiomO{5}, then $F(S)$ becomes a compact, Hausdorff space -- and even an algebraically ordered compact cone \cite[Section~3]{AntPerRobThi21Edwards} -- when equipped with a natural topology 
\cite{EllRobSan11Cone, Rob13Cone, Kei17CuSgpDomainThy}.

Given a \ca{}, the cone $\QT(A)$ of lower-semicontinuous 2-quasitraces on $A$ is naturally isomrphic to $F(\Cu(A))$, as shown in \cite[Theorem~4.4]{EllRobSan11Cone}.

We let $\LAff(F(S))$ denote the monoid of lower-semicontinuous, affine functions $F(S)\to(-\infty,\infty]$, equipped with pointwise order and addition.
For $x\in S$, the \emph{rank} of $x$ is defined as the map $\widehat{x} \colon F(S) \to [0,\infty]$ given by
\[
\widehat{x}(\lambda) := \lambda(x)
\]
for $\lambda \in F(S)$.
The function $\widehat{x}$ belongs to $\LAff(F(S))$ and the \emph{rank problem} of determining which functions in $\LAff(F(S))$ arise this way has been studied extensively in \cite{Thi20RksOps} and \cite{AntPerRobThi22CuntzSR1}.

Sending an element $x \in S$ to its rank $\widehat{x}$ defines a monoid morphism from $S$ to $\LAff (F(S))$ which preserves both the order and suprema of increasing sequences.
\end{pgr}

\begin{lma}
\label{prp:O5Appl}
Let $S$ be a $(2,\omega )$-divisible \CuSgp{} satisfying \axiomO{5}, and let $u\in S_\soft$ and $u',x\in S$ be such that
\[
u'\ll u\ll x.
\]
 
Then, there exists $c\in S_\soft $ satisfying 
\[
u'+2c\leq x\leq \infty c.
\]
\end{lma}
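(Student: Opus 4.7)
The plan is to use axiom O5 to extract a ``complement'' $r$ of $u$ in $x$, halve $r$ via $(2,\omega)$-divisibility, and then use the strong softness of $u$ to ensure that the resulting soft element generates the correct ideal.

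First, using \axiomO{2}, I would interpolate $u' \ll u'' \ll u$ and apply \axiomO{5} to the inequality $u'' \ll u \leq x$ (with the auxiliary parameters $y = y' = 0$) to obtain $r \in S$ satisfying
\[
u'' + r \leq x \leq u + r.
\]
Using the sharper form of strong softness from \cite[Proposition~4.13]{ThiVil23arX:Soft}, I would then find $t \in S_\soft$ with $u'' + t \leq u$ and $u \leq \infty t$. Combined with $x \leq u + r$, this shows that $x$ lies in the ideal generated by $t + r$.

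The main construction is then to apply $(2,\omega)$-divisibility to produce an element $e \in S$ with $2e \leq r$ and $r \leq \infty e$. A direct application of $(2,\omega)$-divisibility to a $\ll$-predecessor of $r$ only yields $2e \leq r$ together with an ideal bound of the form $r' \leq \infty e$ for some $r' \ll r$, so the construction requires iterating $(2,\omega)$-divisibility along a $\ll$-increasing chain of predecessors of $r$ and combining the resulting halvings via \axiomO{4} and \axiomO{5} into an increasing sequence whose supremum has the stronger ideal-containment property. Once $e$ is produced, its softness is arranged using \autoref{prp:SoftPartAxioms} together with the abundance of soft elements from \cite[Proposition~7.7]{ThiVil23arX:Soft}: one replaces $e$ by a strongly soft element below it generating the same ideal. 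Setting $c := e$, the inequality $u' + 2c \leq u' + r \leq u'' + r \leq x$ is then immediate from $u' \ll u''$.

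The main obstacle is the remaining inequality $x \leq \infty c$. From $x \leq u + r$ one only obtains $x \leq \infty t + \infty c$, so one must link the ideal of $t$ to that of $c$. I would resolve this by applying the halving construction of the previous step not to $r$ but to the combination $t + r$, exploiting that $t$ is itself strongly soft (so the construction interacts well with $S_\soft$). The extra contribution of $t$ on the left side is then absorbed using that $u' + t \ll u \leq x$, together with a careful bookkeeping that relies on \axiomO{5} to maintain the slack needed for $u'$. The key inputs are precisely the hypotheses provided: $(2,\omega)$-divisibility, axiom \axiomO{5}, and the strong softness of $u$.
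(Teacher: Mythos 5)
Your toolkit is the right one---\cite[Proposition~4.13]{ThiVil23arX:Soft} for the soft complement, $(2,\omega)$-divisibility in the packaged form of \cite[Proposition~7.7]{ThiVil23arX:Soft}, and \axiomO{5}---but the assembly has a genuine gap at exactly the point you flag as the ``main obstacle''. By decoupling the two steps (first applying \axiomO{5} with $y=y'=0$ to get $r$ with $u''+r\leq x\leq u+r$, then separately producing $t\in S_\soft$ with $u''+t\leq u\leq\infty t$), you lose any relation between $t$ and $r$. Your proposed patch is to halve $t+r$, obtaining $c\in S_\soft$ with $2c\leq t+r\leq\infty c$; this does give $x\leq u+r\leq\infty t+r\leq\infty c$, but the other inequality $u'+2c\leq x$ now requires $u'+t+r\leq x$, and this does \emph{not} follow from your constraints: all you know is $u''+r\leq x$, so you would need $u'+t\leq u''$, which is unavailable (you only have $u'+t\ll u$, and $u'+t+r\ll u+r$ bounds the wrong side, since $x\leq u+r$). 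Concretely, in $S=[0,\infty]$, where every element is strongly soft, take $u'=0.85\ll u''=0.9\ll u=1\ll x=10$, $t=0.1$, and $r=9.1$: every constraint in your construction is satisfied, yet $u'+t+r=10.05>10=x$. The ``careful bookkeeping that relies on \axiomO{5}'' that you defer is precisely where the content of the lemma sits, and it is left unproved.

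The paper closes this gap with a single, \emph{coupled} application of the two-summand form of \axiomO{5}. Writing $s$ for the soft complement ($u''+s\leq u\leq\infty s$ via \cite[Proposition~4.13]{ThiVil23arX:Soft}), one first chooses $s'\ll s$ with $u''\ll\infty s'$ (possible since $u''\ll u\leq\infty s$), and then applies \axiomO{5} to $u''+s\leq x$ with the predecessors $u'\ll u''$ and $s'\ll s$. This produces one element $d$ satisfying simultaneously $s'\leq d$ and $u'+d\leq x\leq u''+d$; the crucial effect of feeding $s$ into \axiomO{5} as the second summand is that the complement $d$ is forced to dominate $s'$, so the ideal bound comes for free: $x\leq u''+d\leq\infty s'+d\leq\infty d$. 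Applying \cite[Proposition~7.7]{ThiVil23arX:Soft} to $d$ then yields $c\in S_\soft$ with $2c\leq d\leq\infty c$, whence $u'+2c\leq u'+d\leq x\leq\infty d\leq\infty c$. (Incidentally, the iteration you sketch for upgrading the weak ideal bound $r'\leq\infty e$, $r'\ll r$, to $r\leq\infty e$ is unnecessary: \cite[Proposition~7.7]{ThiVil23arX:Soft} already provides, in one step, a strongly soft halving $c$ with $2c\leq d\leq\infty c$, and this is how the paper uses it.)
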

\begin{proof}
Let $u''\in S$ be such that $u'\ll u''\ll u$. 
By \cite[Proposition~4.13]{ThiVil23arX:Soft}, there exists $s\in S$ satisfying 
\[
u''+s\leq u\leq \infty s.
\]

Since $u''\ll u\leq \infty s$, there exists $s' \in S$ such that
\[
s' \ll s, \andSep 
u''\ll \infty s'.
\]

We have 
\[
u''+s \leq x. \quad
u' \ll u'', \andSep
s'\ll s.
\]
Applying \axiomO{5}, we obtain $d\in S$ such that $u'+d\leq x\leq u''+d$ with $s'\leq d$. 
Since $u''\leq \infty s'$, it follows that $x\leq \infty d$.
Finally, apply \cite[Proposition~7.7]{ThiVil23arX:Soft} to $d$ in order to obtain $c\in S_\soft$ such that $2c\leq d\leq \infty c$. 
This element satisfies the required conditions.
\end{proof}

A \CuSgp{} $S$ is said to be \emph{countably based} if it contains a countable subset $D\subseteq S$ such that every element in $S$ can be written as the supremum of an increasing sequence of elements in $D$. 
Separable \ca{s} have countably based Cuntz semigroups; see, for example, \cite{AntPerSan11PullbacksCu}.

\begin{lma}
\label{prp:PreLxDirected}
Let $S$ be a countably based, $(2,\omega )$-divisible \CuSgp{} satisfying \axiomO{5}-\axiomO{7}, and let $x\in S$.
Consider the set
\[
L_x :=  \big\{ u'\in S : u'\ll u\ll x \text{ for some } u\in S_\soft\big\}.
\]

Then, for every $k \in \NN$, $x'\in S$ such that $x'\ll x$, and $u',v'\in L_x$, there exists a strongly soft element $w'\in L_x$ such that 
\[
u'\ll w',\quad 
x'\ll \infty w',\andSep 
\frac{k}{k+1}\widehat{v'}\leq \widehat{w'} \text{ in $\LAff (F(S))$}.
\]

If, additionally, $S$ is left-soft separative, $w'$ may be chosen so that $v' \ll w'$.
\end{lma}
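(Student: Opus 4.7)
The plan is to build $w'$ by combining $u'$ itself (to secure $u' \ll w'$), a strongly soft piece that supports $x'$ (to secure $x' \ll \infty w'$), and a ``large fraction'' of $v'$ whose rank is at least $\tfrac{k}{k+1}\widehat{v'}$. The factor $\tfrac{k}{k+1}$ will arise from using $(k+1,\omega)$-divisibility to peel a small piece of rank at most $\tfrac{1}{k+1}\widehat{v'}$ off of $v'$, and then invoking \axiomO{5} to produce the complementary large piece.

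First, fix auxiliary elements $u' \ll u^{(0)} \ll u$, $v' \ll v^{(0)} \ll v^{(1)} \ll v$, and $x' \ll x^{(0)} \ll x$. By iterating $(2,\omega)$-divisibility with \axiomO{5}, one obtains the standard $(k+1,\omega)$-divisibility statement: for every $a' \ll a$ there is $y$ with $(k+1)y \leq a$ and $a' \leq \infty y$. Applied to $v^{(0)} \ll v'$, this gives $y \in S$ with $(k+1)y \leq v'$ and $v^{(0)} \leq \infty y$, so in particular $\widehat{y} \leq \tfrac{1}{k+1}\widehat{v'}$ in $\LAff(F(S))$. Pick $y'' \in S$ with $y'' \ll y$, and apply \axiomO{5} to the decomposition $y + ky \leq v'$ with approximations $y'' \ll y$ and $ky'' \ll ky$ to obtain $c \in S$ satisfying $ky'' \ll c$ and $y'' + c \leq v' \leq y + c$. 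From $v' \leq y + c$ we read off the rank bound
\[
\widehat{c} \geq \widehat{v'} - \widehat{y} \geq \tfrac{k}{k+1}\widehat{v'}.
\]

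Next, apply \cite[Proposition~7.7]{ThiVil23arX:Soft} to $x^{(0)} \ll x$ to obtain a strongly soft $s \in S_\soft$ with $s \ll x$ and $x^{(0)} \leq \infty s$ (hence $x' \ll \infty s$). Since $u \ll x$, $c \leq v \ll x$, and $s \ll x$, one combines these three pieces into a single element $\leq x$ by iterated applications of \axiomO{7}: first produce $z_1 \leq x$ with $u^{(0)}, ky'' \ll z_1$, then produce $z \leq x$ with $z_1', s^{(0)} \ll z$ for suitable approximants. Since $z \leq u + c + s \leq u + v + s$ and $u, v, s$ are all strongly soft so that $u+v+s \in S_\soft$ (using \autoref{prp:SoftPartAxioms}), one can then apply soft abundance one more time to find a strongly soft $w \in S_\soft$ with $z \ll w \ll x$. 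Taking $w' \ll w$ to be a suitable approximation, one obtains $w' \in L_x$ strongly soft with $u' \ll w'$ and $x' \ll \infty s \leq \infty w'$; the rank inequality is inherited from $c$, which is dominated by $w'$ in the construction.

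For the last assertion, assume $S$ is left-soft separative. Here the goal is to replace the rank inequality $\tfrac{k}{k+1}\widehat{v'} \leq \widehat{w'}$ by the stronger $v' \ll w'$. The strategy is to replace the intermediate element $c$ by a strongly soft element dominating $v'$ itself: using soft abundance together with \autoref{prp:CancelSoft} (left-soft separativity turns rank domination among soft elements into genuine domination), one builds a strongly soft $v^* \in L_x$ with $v' \ll v^*$, and then runs the \axiomO{7}-combination above with $u$, $v^*$, and $s$ in place of $u$, $c$, $s$.

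The main obstacle is ensuring that the rank bound on $c$ (which is not a priori strongly soft) survives the passage to the strongly soft parent $w$. Soft abundance naturally produces upper bounds on ranks (one gets $2s' \leq a$ from a soft approximant $s'$ of $a$), not lower bounds, so one must arrange the order of operations so that $c$ is already sitting below the strongly soft element $u+v+s$ before any softening is done; the fact that $v$ is strongly soft and that $S_\soft$ inherits \axiomO{5}--\axiomO{7} by \autoref{prp:SoftPartAxioms} is what makes this delicate interleaving possible.
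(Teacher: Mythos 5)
There are genuine gaps, and the most fatal one is the final softening step. After combining your pieces into an element $z\leq x$, you invoke ``soft abundance one more time to find a strongly soft $w\in S_\soft$ with $z\ll w\ll x$''. But soft abundance (\cite[Proposition~7.7]{ThiVil23arX:Soft}) runs in the \emph{wrong direction}: given $d$ it produces a strongly soft element $c$ \emph{below} $d$ with $2c\leq d\leq \infty c$; it never dominates a given element by a strongly soft one that is still way below $x$. Producing a strongly soft element of $L_x$ \emph{above} prescribed data is precisely what the lemma asserts, so this step is circular --- and the fact that $z\leq u+c+s$ with $u+v+s$ strongly soft does not help, since $u+v+s\leq 3x$ rather than $\leq x$. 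The paper's proof is intricate exactly because of this obstacle: it sets $l:=\infty(u''+v'')$ and $w:=e'+(z\wedge l)$, where the infimum $z\wedge l$ exists by \cite[Theorem~2.4]{AntPerRobThi21Edwards} (the only place countable basedness is used), where $w$ is strongly soft by the absorption theorem \cite[Theorem~4.14]{ThiVil23arX:Soft} because $z\wedge l\leq l\leq\infty e'$ with $e'$ strongly soft, and where the \axiomO{5}-complement relation $e'+z\leq y$ keeps $w\leq y\ll x$.

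The rank estimate also fails on two counts. First, even granting your bound on $\widehat{c}$, it is not inherited by $w'$: \axiomO{7} only returns $z_1\leq x$ with the chosen \emph{approximants} $u^{(0)},ky''$ way below it, not with $c\leq z_1$; so the final element only dominates $k\widehat{y''}$, and divisibility gives no lower bound on $\widehat{y}$ (only the fullness condition $v^{(0)}\leq\infty y$), which can be far smaller than $\tfrac{k}{k+1}\widehat{v'}$. Second, the subtraction $\widehat{c}\geq\widehat{v'}-\widehat{y}$ is unjustified where $\widehat{v'}$ takes the value $\infty$: in $S=\Cu(\mathcal{W}\oplus\mathcal{O}_2)=[0,\infty]\times\{0,\infty\}$, with $v'=(1,\infty)\ll v=(2,\infty)\ll x=(\infty,\infty)$, $y=(\tfrac{1}{k+1},\infty)$ and $y''=(\tfrac{1}{k+1}-\varepsilon,0)$, the element $c=(\tfrac{k}{k+1},0)$ satisfies every conclusion of \axiomO{5} yet violates $\tfrac{k}{k+1}\widehat{v'}\leq\widehat{c}$ at the second-coordinate functional. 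The paper sidesteps both problems by a different mechanism: it proves the \emph{exact} inequality $\widehat{v''}\leq\widehat{w}$ for the limit element $w$, using additivity of $({-})\wedge l$ (\cite[Theorem~2.5]{AntPerRobThi21Edwards}) together with a multiple bound $d'\wedge l\leq Mw$ to cancel at infinite functionals, and only then introduces the factor $\tfrac{k}{k+1}$ via \cite[Lemma~2.2.5]{Rob13Cone} (from $v'\ll v''$ one gets $\tfrac{k}{k+1}\widehat{v'}\ll\widehat{v''}$) in order to descend from $w$ to an approximant $w'\ll w$. Your treatment of the separativity addendum is likewise unsubstantiated: \autoref{prp:CancelSoft} needs a concrete inequality of the form $x+t\leq y+t'$ in which to cancel, which your sketch never produces, whereas the paper cancels in the explicitly constructed relation $(v''+d)+d\leq z+d'$ to conclude $v''\leq z$ and hence $v'\ll v''\leq z\wedge l\leq w$.
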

\begin{proof}
Let $u',v'\in L_x$, let $x' \in S$ satisfy $x' \ll x$, and let $k \in \NN$.
By definition, there exist $u,v\in S_\soft$ such that 
\[
u'\ll u\ll x,\andSep v'\ll v\ll x.
\]
Choose $y',y\in S$ such that
\[
x' \ll y' \ll y \ll x, \quad
v \ll y', \andSep
u \ll y'.
\]
Using that $S_\soft$ is a sub-\CuSgp{} by \autoref{prp:SoftPartAxioms}, we can choose elements $u'',u''',v'' \in S_\soft$ such that 
\[
u' \ll u'' \ll u''' \ll u, \andSep
v' \ll v'' \ll v.
\]
 
Applying \autoref{prp:O5Appl} for $u'''\ll u\ll y$ and $v''\ll v\ll y$, we obtain $c,d \in S_\soft$ such that 
\[
u'''+c\leq y\leq \infty c,\andSep 
v''+2d\leq y\leq \infty d.
\]

Then, applying \cite[Proposition~4.10]{ThiVil23Glimm} for $y'\ll y\leq\infty c,\infty d$, we get $e\in S$ such that
\[
y' \ll \infty e, \andSep e\ll c,d.
\]

By \cite[Proposition~7.7]{ThiVil23arX:Soft}, there exists a strongly soft element $e_0$ such that $e_0 \leq e \leq \infty e_0$.
Replacing $e$ by $e_0$, we may assume that $e \in S_\soft$.
Using again that $S_\soft$ is a sub-\CuSgp{}, we can find $e',e''\in S_\soft$ satisfying 
\[
y' \ll \infty e', \andSep
e'\ll e''\ll e.
\]

By \cite[Proposition~4.13]{ThiVil23arX:Soft}, there exists $r\in S$ such that
\[
e''+r\leq e\leq \infty r.
\]
Since $e''\ll e$, we can find $r' \in S$ such that
\[
r' \ll r, \andSep 
e''\leq \infty r'.
\] 

Thus, one has 
\[
e''+(r+u''')\leq e+u'''\leq c+u'''\leq y, \quad
e' \ll e'', \andSep
r'+u'' \ll r+u'''.
\]
Applying \axiomO{5}, we obtain $z\in S$ such that
\[
e'+z \leq y \leq e''+z, \andSep 
r'+u''\leq z.
\]

Using again that $S_\soft$ is a sub-\CuSgp{}, choose $d' \in S_\soft$ such that
\[
e \ll d' \ll d.
\]
We have
\begin{equation}
\label{eq:Separative}
(v''+d)+d=v''+2d\leq y \leq z+e'' \leq z+d',
\end{equation}
with $v''+d\in S_\soft$. 
Note that 
\[
d'\ll d\leq \infty (v''+d),\andSep 
d'\ll d\leq y\leq z+e''\leq z+\infty r'\leq \infty z.
\]

In particular, since $d'\ll \infty z$, there exists $M\in\NN$ such that $d'\leq Mz$.
Set 
\[
l:=  \infty (u''+v''), \andSep
w:=e'+ (z\wedge l),
\]
where $z\wedge l$ exists because $l$ is idempotent, and $S$ is countably based and satisfies \axiomO{7}; see \cite[Theorem~2.4]{AntPerRobThi21Edwards}. 

Note that, since $l\leq \infty y'\leq  \infty e'$ and $e'\in S_\soft$, it follows from \cite[Theorem~4.14]{ThiVil23arX:Soft} that $w\in S_\soft$. 
We get
\[
w \leq e'+z\leq y\ll x,\quad 
x'\ll y'\leq \infty e'\leq \infty w,\andSep 
u'\ll u''\leq z\wedge l\leq w.
\]

By \cite[Theorem~2.5]{AntPerRobThi21Edwards}, the map $S \to S$, $s \mapsto s\wedge l$, is additive.
Using this at the second and fourth step, we get
\begin{align*}
v'' + 2(d'\wedge l)
&= (v'' \wedge l) + 2(d'\wedge l)
= (v''+2d')\wedge l \\
&\leq (z+d')\wedge l 
= (z \wedge l) + (d'\wedge l)
\leq w + (d'\wedge l).
\end{align*}

We also have $d'\wedge l\leq (Mz)\wedge l = M(z\wedge l)\leq Mw$, and this implies that
\[
\widehat{v''}\leq \widehat{w}.
\]

Now, since $v'\ll v''$ and $\frac{k}{k+1}<1$, we can apply \cite[Lemma~2.2.5]{Rob13Cone} to obtain
\[
\frac{k}{k+1} \widehat{v'} \ll \widehat{v''}\leq \widehat{w}.
\]

Since $w$ is strongly soft and $S_\soft$ is a sub-\CuSgp{}, there exists a $\ll$-increasing sequence of soft elements with supremum $w$. Using that the rank map $x\mapsto  \widehat{x}$ preserves suprema of increasing sequences, we can find $w' \in S_\soft$ such that
\[
w'\ll w, \quad
\frac{k}{k+1} \widehat{v'} \leq \widehat{w'}, \quad 
x'\ll \infty w',\andSep 
u'\ll w'.
\]
Further, we have $w'\ll w\ll x$. 
This shows that $w'$ is a strongly soft element in $L_x$, as desired.

If, additionally, $S$ is left-soft separative, we can apply \autoref{rmk:LSSepEqu} on \eqref{eq:Separative} to obtain that $v''+d \leq z$, and so $v'' \leq z$.
We also have $v'' \leq l$ and thus
\[
v' \ll v'' \leq z \wedge l \leq w.
\]
We also have $u' \ll u'' \leq w$ and $x' \ll \infty w$.
Using that~$w$ is strongly soft and that~$S_\soft$ is a sub-\CuSgp{}, we can find $w' \in S_\soft$ such that $u',v' \ll w' \ll w$ and $x' \ll \infty w'$.
Then $w'$ has the desired properties.
\end{proof}

\begin{rmk}
\label{rmk:CaCtbBas}
The assumption of $S$ being countably based in \autoref{prp:PreLxDirected} is only used to prove the existence of the infimum $z\wedge l$. If $S$ is the Cuntz semigroup of a \ca{}, this infimum always exists; see \cite{CiuRobSan10CuIdealsQuot}. 
Thus, the first part of \autoref{prp:PreLxDirected} holds for every \ca{} with the Global Glimm Property.
\end{rmk}

\begin{prp}
\label{prp:LxDirected}
Let $S$ be a countably based, $(2,\omega )$-divisible \CuSgp{} satisfying \axiomO{5}-\axiomO{7}, let $x',x\in S$ with $x' \ll x$, let $k \in \NN$, and let $u' \in L_x$.
Then, for every finite subset $C\subseteq L_x$, there exists a strongly soft element $w'\in L_x$ such that 
\[
u'\ll w',\quad 
x'\ll \infty w',\andSep 
\frac{k}{k+1}\widehat{v'} \leq \widehat{w'} \text{ in $\LAff (F(S))$}
\]
for every $v'\in C$.
\end{prp}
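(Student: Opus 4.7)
The plan is a straightforward induction on $|C|$. The base case $|C|=1$ is exactly Lemma~\ref{prp:PreLxDirected}, and the case $C=\emptyset$ reduces to the same lemma applied to any single element (for example $u'$ itself in place of $v'$).

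For the inductive step, enumerate $C = \{v_1',\ldots,v_n'\}$ with $n\geq 2$, and set $C_0 := C \setminus \{v_n'\}$. By the induction hypothesis applied to $C_0$, there exists a strongly soft $w_0' \in L_x$ satisfying
\[
u' \ll w_0', \quad x' \ll \infty w_0', \andSep \frac{k}{k+1}\widehat{v_j'} \leq \widehat{w_0'}
\]
in $\LAff(F(S))$ for each $j < n$. Since $w_0'$ is strongly soft and lies in $L_x$, it is a legitimate input to Lemma~\ref{prp:PreLxDirected} in the role of $u'$. Applying that lemma with this new $u':=w_0'$ and $v':=v_n'$ yields a strongly soft element $w' \in L_x$ with
\[
w_0' \ll w', \quad x' \ll \infty w', \andSep \frac{k}{k+1}\widehat{v_n'} \leq \widehat{w'}.
\]

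Setting this $w'$ as the desired element, the conditions $u' \ll w'$ (from $u' \ll w_0' \ll w'$) and $x' \ll \infty w'$ are immediate, as is the rank inequality for $v_n'$. To recover the rank inequalities for $v_j'$ with $j < n$, I invoke the fact that the rank map $s \mapsto \widehat{s}$ is an order-preserving monoid morphism $S \to \LAff(F(S))$ (see~\autoref{pgr:FuncRank}), so $w_0' \ll w'$ gives $\widehat{w_0'} \leq \widehat{w'}$ and thus
\[
\frac{k}{k+1}\widehat{v_j'} \leq \widehat{w_0'} \leq \widehat{w'}.
\]

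There is no substantive obstacle here: all the real content — in particular the interplay between $(2,\omega)$-divisibility, \axiomO{5}-\axiomO{7}, and the use of $z\wedge l$ — has already been handled inside Lemma~\ref{prp:PreLxDirected}. The present proposition is essentially a bookkeeping upgrade, and the only point worth checking is that each intermediate $w_i'$ produced by the base lemma lies in $L_x$ and is strongly soft, so that it can legitimately be fed back as the new $u'$ at the next step; this is automatic from the conclusion of Lemma~\ref{prp:PreLxDirected}.
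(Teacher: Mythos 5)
Your proof is correct and is essentially identical to the paper's: the paper also argues by induction on $\vert C\vert$, takes \autoref{prp:PreLxDirected} as the base case, feeds the strongly soft element produced by the induction hypothesis back into that lemma in the role of $u'$ against the remaining element of $C$, and recovers the earlier rank inequalities from $\widehat{w_0'}\leq\widehat{w'}$ via order-preservation of the rank map. Your added observations (the vacuous case $C=\emptyset$ and the explicit check that the intermediate element is a legitimate input, which holds since \autoref{prp:PreLxDirected} only requires $u'\in L_x$) are harmless refinements of the same argument.
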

\begin{proof}
We will prove the result by induction on $\vert C\vert$, the size of $C$.
If $\vert C\vert =1$, the result follows from \autoref{prp:PreLxDirected}.

Thus, fix $n\in\NN$ with $n \geq 2$, and assume that the result holds for any finite subset of $n-1$ elements.
Given $C\subseteq L_x$ with $\vert C\vert =n$, pick some $v_0\in C$. 
Applying the induction hypothesis, we get an element $w''\in L_x$ such that 
\[
u' \ll w'',\quad 
x' \ll \infty w'',\andSep 
\frac{k}{k+1}\widehat{v'} \leq \widehat{w''}
\]
for every $v'\in C \setminus \{v_0\}$.

Now, applying \autoref{prp:PreLxDirected} to $x'$, $w''$ and $v_0$, we get a strongly soft element $w' \in L_x$ such that 
\[
w'' \ll w',\quad 
x' \ll \infty w',\andSep 
\frac{k}{k+1}\widehat{v_0} \leq \widehat{w'}.
\]

Then $\widehat{w''}\leq \widehat{w'}$, which shows that $w'$ satisfies the required conditions.
\end{proof}

\begin{prp}
\label{prp:FuncSupSoft}
Let $S$ be a countably based, $(2,\omega )$-divisible \CuSgp{} satisfying \axiomO{5}-\axiomO{7}, let $x \in S$, and let $u' \in L_x$.
Then there exists $w\in S_\soft$ such that
\[
u'\ll w\leq x\leq \infty w,\andSep 
\lambda (w)=\sup_{v'\in L_x} \lambda (v'),
\]
for every $\lambda \in F(S)$.
\end{prp}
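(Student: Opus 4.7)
The plan is to construct $w$ as the supremum of a carefully chosen $\ll$-increasing sequence of strongly soft elements in $L_x$, obtained by iterating \autoref{prp:LxDirected} so that the $\tfrac{k}{k+1}$-approximations of ranks converge in the limit to equality.

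Since $S$ is countably based, fix a countable basis $D \subseteq S$ and set $D_x := \{d \in D : d \in L_x\}$. This set is countable, and any $v' \in L_x$ admits a $\ll$-increasing approximation $(d_n)_n$ by basis elements which automatically lies in $D_x$: indeed, if $v' \ll v \ll x$ with $v \in S_\soft$, then every $d_n \ll v'$ also satisfies $d_n \ll v \ll x$. It follows that $\sup_{d \in D_x} \lambda(d) = \sup_{v' \in L_x} \lambda(v')$ for every $\lambda \in F(S)$. Enumerate $D_x = \{v'_1, v'_2, \ldots\}$ and fix a $\ll$-increasing sequence $(x_n)_n$ in $S$ with $\sup_n x_n = x$.

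Next I inductively construct strongly soft elements $w_n \in L_x$ such that $u' \ll w_0$, $w_n \ll w_{n+1}$, $x_n \ll \infty w_n$, and $\tfrac{n}{n+1}\widehat{v'_j} \leq \widehat{w_n}$ in $\LAff(F(S))$ for every $j \leq n$. The base case is \autoref{prp:LxDirected} applied with the given $u'$, with $x' = x_0$, $k = 0$, and $C = \emptyset$. For the inductive step, having constructed $w_n \in L_x$, I apply \autoref{prp:LxDirected} with $w_n$ in the role of $u'$, with $x' = x_{n+1}$, $k = n+1$, and $C = \{v'_1, \ldots, v'_{n+1}\}$, yielding $w_{n+1}$ with all required properties.

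Finally set $w := \sup_n w_n$. By \autoref{prp:SoftPartAxioms}, $S_\soft$ is a sub-\CuSgp{}, so $w \in S_\soft$. From $u' \ll w_0 \leq w$ and $w_n \leq x$ we get $u' \ll w \leq x$, while $x_n \ll \infty w_n \leq \infty w$ for all $n$ gives $x \leq \infty w$. For $\lambda \in F(S)$ and each fixed $j$, the functional $\lambda$ preserves suprema of increasing sequences, hence $\lambda(w) = \sup_n \lambda(w_n) \geq \sup_{n \geq j} \tfrac{n}{n+1}\lambda(v'_j) = \lambda(v'_j)$; taking the supremum over $j$ yields $\lambda(w) \geq \sup_{d \in D_x} \lambda(d) = \sup_{v' \in L_x} \lambda(v')$. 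Conversely, each $w_n$ lies in $L_x$, so $\lambda(w) = \sup_n \lambda(w_n) \leq \sup_{v' \in L_x} \lambda(v')$, giving the desired equality.

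The main obstacle is arranging the inductive step so that the three conditions (strict $\ll$-extension of $w_n$, absorption $x_{n+1} \ll \infty w_{n+1}$, and rank approximation of the finitely many $v'_j$ accumulated so far) can all be met simultaneously by a single strongly soft element in $L_x$. This is precisely what \autoref{prp:LxDirected} provides, and the factor $\tfrac{k}{k+1}$ appearing there is exactly what is needed to recover the ranks of the $v'_j$'s in the limit.
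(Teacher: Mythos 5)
Your proposal is correct and takes essentially the same approach as the paper's proof: both iterate \autoref{prp:LxDirected} to build a $\ll$-increasing sequence of strongly soft elements of $L_x$ whose ranks dominate $\tfrac{n}{n+1}$ times the ranks of a countable, basis-dense subset of $L_x$, and then pass to the supremum, recovering the exact ranks in the limit. The only cosmetic difference is that the paper filters $L_x$ through the sets $L_{x_n}$ and finite subsets $C_n \subseteq B \cap L_{x_n}$, whereas you enumerate $D \cap L_x$ and apply the lemma with ambient element $x$ and $x' = x_{n+1} \ll x$ directly; both variants are valid.
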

\begin{proof}
By definition of $L_x$, we obtain $u \in S_\soft$ such that $u' \ll u \ll x$. 
Let $(x_n)_n$ be a $\ll$-increasing sequence with supremum $x$, and such that $u \ll x_0$. 
Note that the sets $L_{x_n}$ form an increasing sequence of subsets of $S$ with $L_x = \bigcup_n L_{x_n}$.

Let $B$ be a countable basis for $S$.
Then
\[
B \cap L_x = \bigcup_n (B \cap L_{x_n}),
\]
and we can choose a $\subseteq$-increasing sequence $(C_n)_n$ of finite subsets of $B \cap L_x$ such that 
\[
B \cap L_x = \bigcup_n C_n, \andSep 
C_n \subseteq B \cap L_{x_n} \text{ for each $n$}.
\]

We have $u'\in L_{x_0} \subseteq L_{x_1}$.
Apply \autoref{prp:LxDirected} to $k=1,(0 \ll x_1),u',C_1$ to obtain a strongly soft element $w_1'\in L_{x_1}$ such that 
\[
u' \ll w_1',\quad 
0 \ll \infty w_1', \andSep 
\frac{1}{2}\widehat{v'} \leq \widehat{w_1'}
\]
for every $v' \in C_1$.
 
We have $w_1' \in L_{x_2}$. 
Applying \autoref{prp:LxDirected} again to $k=2,(x_1 \ll x_2),w_1',C_2$, we obtain a strongly soft element $w_2' \in L_{x_2}$ such that 
\[
w_1' \ll w_2',\quad 
x_1 \ll \infty w_2',\andSep 
\frac{2}{3} \widehat{v'}\leq \widehat{w_2'}
\]
for every $v'\in C_2$.
 
Proceeding inductively, we get a $\ll$-increasing sequence of strongly soft elements $(w_n')_n$ such that 
\[
w_n'\in L_{x_n},\quad 
x_{n-1}\ll \infty w_{n}'\andSep 
\frac{n}{n+1}\widehat{v'}\leq \widehat{w_n'}
\]
for every $v'\in C_n$ and $n \geq 2$.

Set $w:=\sup_n w_n'$, which is strongly soft by \cite[Theorem~4.14]{ThiVil23arX:Soft}. 
Note that we get $u' \ll w_1' \leq w \leq x$ by construction. 
Further, since $x_n\leq \infty w_{n+1}'\leq \infty w$ for each $n \geq 2$, we deduce that $x\leq \infty w$.
 
Now take $\lambda \in F(S)$. 
Given $v'\in B \cap L_x$, choose $n_0 \geq 2$ such that $v' \in C_{n_0}$. 
We have 
\[
\frac{n}{n+1} \lambda(v') 
\leq \lambda(w_n')
\leq \lambda(w)
\]
for every $n\geq n_0$. 
Thus, it follows that $\lambda (v') \leq \lambda (w)$ for every $v' \in B \cap L_x$.

Since $L_x$ is downward-hereditary, every element in $L_x$ is the supremum of an increasing sequence from $B \cap L_x$.
Using also that functionals preserve suprema of increasing sequences, we obtain 
\[
\sup_{v'\in L_x} \lambda (v')
\leq \sup_{v'\in B\cap L_x} \lambda (v')
\leq \lambda (w) 
= \sup_n \lambda (w_n')\leq 
\sup_{v'\in L_x} \lambda (v'),
\]
which shows that $w$ has the desired properties.
\end{proof}

\begin{lma}
\label{prp:SigmaDomination}
Let $S$ be a $(2, \omega)$-divisible \CuSgp{} satisfying \axiomO{5}-\axiomO{7}, and let $x',x,t\in S$ be such that $x'\ll x\leq \infty t$. 
Then there exists a strongly soft element $u'\in L_x$ such that 
\[
x'\ll u'+t.
\]
\end{lma}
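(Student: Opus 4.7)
The plan is to produce $u'$ as a strongly soft element of the form $(N-1)f$, where $f$ is a common strongly soft ``factor'' of a suitable soft cover $c$ of $x$ and of $t$, extracted via Proposition~4.10 of \cite{ThiVil23Glimm}, Proposition~7.7 of \cite{ThiVil23arX:Soft}, and \autoref{prp:O5Appl}.

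Concretely, I would fix a chain $x' \ll x_0 \ll x_1 \ll x_2 \ll x_3 \ll x$. Applying Proposition~7.7 of \cite{ThiVil23arX:Soft} to $x_3$ produces a strongly soft $u \ll x$, and \autoref{prp:O5Appl} applied to $u^- \ll u \ll x$ (for some $u^- \ll u$ in $S_\soft$) yields a strongly soft $c$ with $u^- + 2c \leq x \leq \infty c$. Since $x_1 \leq x \leq \infty c, \infty t$, Proposition~4.10 of \cite{ThiVil23Glimm} applied to $x_0 \ll x_1$ produces $e \in S$ with $x_0 \ll \infty e$ and $e \ll c, t$; a final application of Proposition~7.7 to $e$ delivers a strongly soft $f \in S_\soft$ with $f \ll c$, $f \ll t$, and $e \leq \infty f$, so that $x_0 \leq Nf$ for some $N \in \NN$. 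Setting $u' := (N-1)f \in S_\soft$ and using $f \leq t$, one has $x_0 \leq (N-1)f + f \leq u' + t$, whence $x' \ll u' + t$.

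The remaining and principal task is to verify $u' \in L_x$. From $f \ll c$ one finds $c^* \in S_\soft$ with $f \ll c^* \ll c$ (using that $S_\soft$ is a sub-\CuSgp{} and the standard interpolation of $\ll$), whence $u' \ll (N-1)c^* \in S_\soft$ by \axiomO{3}, and it remains to argue $(N-1) c^* \ll x$. By $c^* \ll c$ and \axiomO{3} one has $(N-1)c^* \ll (N-1)c$, and $(N-1)c \leq x$ would follow from the inequality $(N-1)c \leq 2c$---that is, from $N \leq 3$---combined with $u^- + 2c \leq x$. Since $N$ is not under direct control, I expect the main obstacle to be handled by interposing an iterated $(2,\omega)$-divisibility on $c$ to produce, for every $k \geq 0$, a strongly soft $c_k$ with $2^k c_k \leq c$ and $c \leq \infty c_k$, and then running the Proposition~4.10 step with $\infty c_k$ in place of $\infty c$. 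The corresponding multiplicity $N_k$ grows with $k$, and the needed bound becomes $(N_k - 1) c_k \leq 2c$, i.e.\ $N_k - 1 \leq 2^{k+1}$. Closing this bootstrapping---checking that a single $k$ can be chosen for which the multiplicity comparison holds, by leveraging the functional scaling of $N_k$ with the divisibility power built into $c_k$---is the heart of the proof.
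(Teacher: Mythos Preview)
Your approach has a genuine gap at the bootstrapping step, and I do not see how to close it. The difficulty is that the covering multiplicity $N_k$ (the least $N$ with $x_0 \leq N f_k$) is governed by how small $f_k$ is relative to $x_0$, while the bound you need, $N_k - 1 \leq 2^{k+1}$, is about how small $c_k$ is relative to $c$. There is no mechanism in a general $(2,\omega)$-divisible \CuSgp{} linking these two quantities: when you pass from $c$ to $c_k$ with $2^k c_k \leq c$, the element $f_k$ you extract via Proposition~4.10 and Proposition~7.7 satisfies only $f_k \ll c_k$ and $2 f_k \leq e_k$, and may be arbitrarily small compared to $c_k$; consequently $N_k$ can grow much faster than $2^{k+1}$. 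What you are implicitly invoking is a form of near-unperforation (a comparison between ``how many copies of $f$ fit inside $x$'' and ``how many copies of $f$ are needed to cover $x_0$''), which is not available under the stated hypotheses.

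The paper's proof avoids multiplicities entirely by using \axiomO{5} and \axiomO{6} to peel off a \emph{single} copy of a small element dominated by $t$. Concretely: after interpolating $x' \ll x'' \ll x$, one applies \cite[Proposition~4.10]{ThiVil23Glimm} to $x'' \ll x \leq \infty x, \infty t$ and then \cite[Proposition~7.7]{ThiVil23arX:Soft} to obtain $s' \in S_\soft$ and $s \in S$ with $s' \ll s \ll x, t$ and $x'' \leq \infty s'$. Now \axiomO{5} applied to $s' \ll s \leq x$ yields a complement $v$ with $v + s' \leq x \leq v + s$, and \axiomO{6} applied to $x' \ll x'' \leq v + s$ produces $u \ll v, x''$ with $x' \ll u + s$. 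Since $u \ll x'' \leq \infty s'$, the sum $u + s'$ is strongly soft and satisfies $u + s' \leq v + s' \leq x$; choosing $u' \in S_\soft$ with $u' \ll u + s'$ and $x' \ll u' + t$ (using $s \leq t$) finishes the argument. The key conceptual difference is that the paper uses $t$ exactly once and builds the soft element below $x$ as an additive complement rather than as a multiple of a small factor.
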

\begin{proof}
Choose $x'' \in S$ such that $x'\ll x''\ll x$. 
Applying \cite[Proposition~4.10]{ThiVil23Glimm} to 
\[
x''\ll x\leq\infty x,\infty t,
\]
we get $s\in S$ such that
\[
x'' \ll \infty s, \andSep s\ll x,t.
\]

By \cite[Proposition~7.7]{ThiVil23arX:Soft}, we can choose $s'\in S_\soft $ such that
\[
x'' \leq \infty s', \andSep s'\ll s.
\]
Then $x'' \ll \infty s'$.
Applying \axiomO{5} to $s'\ll s\leq x$, we obtain $v\in S$ satisfying
\[
v+s'\leq x\leq v+s.
\]

In particular, one has $x''\ll v+s$. 
Applying \axiomO{6} to $x' \ll x'' \leq v+s$, we find $u\in S$ such that 
\[
 x'\ll u+s,\andSep u\ll x'' , v.
\]

Since $u\ll x''\leq \infty s'$, it follows from \cite[Theorem~4.14]{ThiVil23arX:Soft} that $u+s'$ is soft. 
Further, we get 
\[
x' 
\ll u+s
\leq u+t
\leq (u+s')+t, \andSep 
 u+s'\leq v+s'\leq x.
\]

Using that $S_\soft$ is a sub-\CuSgp{} by \autoref{prp:SoftPartAxioms}, find can find $u'\in S_\soft$ such that 
\[
x'\ll u'+t,\andSep 
u'\ll u+s'\leq x.
\]
Then $u'\in L_x$, which shows that $u'$ has the desired properties.
\end{proof}

\begin{lma}
\label{prp:ImprovSigmaDomination}
Let $S$ be a $(2,\omega )$-divisible \CuSgp{} satisfying \axiomO{5}-\axiomO{7}, and let $t\in S_\soft$ and $t',x',x\in S$ be such that 
\[
x'\ll x\leq \infty t,\andSep 
t'\ll t.
\]

Then, there exists a strongly soft element $v'\in L_x$ such that 
\[
x'+t'\leq v'+t.
\]
\end{lma}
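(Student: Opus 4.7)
The plan is to reduce the statement to Proposition \ref{prp:SigmaDomination} by first carving out a subelement of $t$ that can absorb $t'$. Roughly, I want to replace $t$ in the weaker lemma by a strongly soft element $s$ that is still large enough for $\infty s$ to dominate $x$, but that also satisfies $t'+s\leq t$, so that $s$ and $t'$ together fit inside $t$.

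The key step uses the softness of $t$. Since $t\in S_\soft$ and $t'\ll t$, \cite[Proposition~4.13]{ThiVil23arX:Soft} produces an element $s\in S_\soft$ with
\[
t'+s\leq t\leq \infty s.
\]
This is precisely the splitting that was already exploited in the proof of \autoref{prp:CancelSoft}. Combining $t\leq \infty s$ with the hypothesis $x\leq \infty t$ gives $x\leq \infty s$.

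With $s$ in hand, the triple $(x',x,s)$ satisfies the hypotheses of Proposition \ref{prp:SigmaDomination}, so applying that proposition produces a strongly soft element $v'\in L_x$ with $x'\ll v'+s$. Adding $t'$ to both sides of $x'\leq v'+s$ and using $s+t'\leq t$ yields
\[
x'+t'\leq v'+s+t'\leq v'+t,
\]
which is the required inequality.

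I do not foresee any real obstacle: all of the combinatorial work has been done in Proposition \ref{prp:SigmaDomination}, and the improvement here is exactly what one obtains by running that argument against the strongly soft complement $s$ of $t'$ inside $t$. The only ingredient genuinely specific to this lemma is the fact, furnished by softness of $t$, that such an $s$ exists.
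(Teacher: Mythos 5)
Your proposal is correct and matches the paper's proof essentially verbatim: both invoke \cite[Proposition~4.13]{ThiVil23arX:Soft} to split $t$ as $t'+s\leq t\leq\infty s$, apply \autoref{prp:SigmaDomination} to $x'\ll x\leq\infty s$, and conclude via $x'+t'\leq v'+s+t'\leq v'+t$. Nothing to add.
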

\begin{proof}
By \cite[Proposition~4.13]{ThiVil23arX:Soft}, there exists $s\in S_\soft$ such that 
\[
t'+s\leq t\leq \infty s.
\]
Applying \autoref{prp:SigmaDomination} to $x'\ll x\leq \infty s$, we obtain a strongly soft element $v'\in L_x$ satisfying $x'\leq v'+s$.
Consequently, we obtain 
\[
x'+t'\leq v'+s+t'\leq v'+t. \qedhere
\]
\end{proof}

We refer to \cite[Section~5]{ThiVil21DimCu2} for an introduction to the basic technique to reduce certain proofs about \CuSgp{s} to the countably based setting. 
In particular, a property $\mathcal{P}$ for \CuSgp{s} is said to satisfy the L\"{o}wenheim-Skolem condition if, for every \CuSgp{} $S$ satisfying $\mathcal{P}$, there exists a $\sigma$-complete and cofinal subcollection of countably based sub-\CuSgp{s} of $S$ satisfying $\mathcal{P}$.

\begin{lma}
\label{prp:PassingSoftElementToSubCuSgp}
Let $S$ be a \CuSgp{}, let $u \in S_\soft$, and let $\mathcal{R}$ be the family of countably based sub-\CuSgp{s} $T \subseteq S$ containing $u$ and such that $u$ is strongly soft in $T$.
Then $\mathcal{R}$ is $\sigma$-complete and cofinal.
\end{lma}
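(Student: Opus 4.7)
The plan is to verify $\sigma$-completeness and cofinality of $\mathcal{R}$ separately, adapting the standard L\"owenheim--Skolem technique for \CuSgp{s} from \cite[Section~5]{ThiVil21DimCu2}. The central technical observation is that for any sub-\CuSgp{} $T\subseteq S$, the way-below relation on $T$ agrees with the restriction of the way-below relation on $S$: one direction is forced by the inclusion being a \CuMor{}, and the converse uses that the inclusion preserves suprema of increasing sequences, so that for $x,y\in T$ and any increasing sequence $(z_n)_n$ in $T$ with $y\leq \sup_T z_n = \sup_S z_n$, the relation $x\ll y$ in $S$ yields some $m$ with $x\leq z_m$. Hence the strong softness condition on $u$ inside a candidate $T$ depends only on whether the required witnesses lie in $T$.

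For $\sigma$-completeness, let $T_1\subseteq T_2\subseteq\cdots$ be an increasing chain in $\mathcal{R}$, and let $T$ be the sub-\CuSgp{} of $S$ generated by $\bigcup_n T_n$. Standard arguments show that $T$ is countably based, contains $u$, and that each of its elements can be written as the supremum of a $\ll$-increasing sequence of elements from $\bigcup_n T_n$. Given $u'\in T$ with $u'\ll u$ in $T$, interpolate $u'\ll u''\ll u$ via \axiomO{2}, and write $u'' = \sup_k w_k$ with $w_k\in T_{n_k}$. The relation $u'\ll u''$ yields some $k$ with $u'\leq w_k$, and since $w_k\ll u$ in $S$, equivalently in $T_{n_k}$, strong softness of $u$ in $T_{n_k}$ produces $t\in T_{n_k}\subseteq T$ with $w_k + t \ll u$ and $w_k \ll \infty t$. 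Then $u'+t\leq w_k+t\ll u$ and $u'\leq w_k\ll \infty t$, as required.

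For cofinality, given a countable $X\subseteq S$, build $T\in\mathcal{R}$ containing $X\cup\{u\}$ by iterative closure. Set $D_0 := X\cup\{u\}$, and at each stage enlarge $D_n$ to a countable $D_{n+1}\subseteq S$ closed under finite sums, \axiomO{2}-interpolants, and witnesses for the axioms \axiomO{5}--\axiomO{7}, as in \cite[Section~5]{ThiVil21DimCu2}. The new ingredient is that for every pair $u',u''\in D_n$ with $u'\ll u''\ll u$ in $S$, one additionally adds to $D_{n+1}$ a witness $t\in S$ with $u''+t\ll u$ and $u''\ll \infty t$, whose existence is guaranteed by the strong softness of $u$ in $S$. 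Only countably many such witnesses are added per stage, so countability is preserved. The sub-\CuSgp{} $T$ generated by $\bigcup_n D_n$ is then countably based and contains $X\cup\{u\}$, and the strong softness of $u$ in $T$ follows by the same interpolation argument as in the $\sigma$-completeness step, using the witnesses accumulated in the $D_n$'s.

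The main obstacle is the bookkeeping of the cofinality construction, where one must simultaneously track witnesses for all relevant axioms together with the strong softness witnesses at each stage while preserving countability. This is handled by the standard diagonalization of \cite[Section~5]{ThiVil21DimCu2}, into which the softness-witness requirement fits naturally since only countably many pairs $(u',u'')\in D_n\times D_n$ with $u'\ll u''\ll u$ arise at any stage.
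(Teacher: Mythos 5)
Your proof is correct and follows essentially the same route as the paper's: the L\"{o}wenheim--Skolem closing-off technique of \cite[Section~5]{ThiVil21DimCu2} augmented with strong-softness witnesses, hinging on the observation that the way-below relation of a sub-\CuSgp{} is the restriction of the ambient one. The paper merely economizes where you do manual work: $\sigma$-completeness follows in one line because strong softness is preserved under \CuMor{s} (in particular under the inclusion of one sub-\CuSgp{} into a larger one), and for cofinality it suffices to adjoin witnesses $t_n$ for a single $\ll$-increasing sequence $(u_n)_n$ with supremum $u$ via one application of \cite[Lemma~5.1]{ThiVil21DimCu2} (rather than adding witnesses for all pairs at every stage), since any $u' \ll u$ in $T$ satisfies $u' \leq u_n$ for some $n$.
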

\begin{proof}
Strong softness is preserved under \CuMor{s}, and the inclusion map of a sub-\CuSgp{} is a \CuMor.
Hence, given sub-\CuSgp{s} $T_1 \subseteq T_2 \subseteq S$ containing $u$, if $u$ is strongly soft in $T_1$ then it is also strongly soft in $T_2$.
This implies in particular that $\mathcal{R}$ is $\sigma$-complete.

To show that $\mathcal{R}$ is cofinal, let $T_0 \subseteq S$ be a countably based sub-\CuSgp, and let $B_0 \subseteq T_0$ be a countable basis, that is, a countable subset such that every element in $T_0$ is the supremum of an increasing sequence from $B_0$.

Let $(u_n)_n$ be a $\ll$-increasing sequence in $S$ with supremum $u$.
Since $u$ is strongly soft in $S$, for each $n$ we obtain $t_n \in S$ such that 
\[
u_n + t_n \ll u, \andSep
u_n \ll \infty t_n.
\]

By \cite[Lemma~5.1]{ThiVil21DimCu2}, there exists a countably based sub-\CuSgp{} $T \subseteq S$ containing 
\[
B_0 \cup \{u_0,u_1,\ldots\} \cup \{t_0,t_1,\ldots\}.
\]
One checks that $T_0 \subseteq T$, and that $u$ is strongly soft in $T$.
\end{proof}

\begin{thm}
\label{prp:SoftRankS}
Let $S$ be a $(2,\omega )$-divisible \CuSgp{} satisfying \axiomO{5}-\axiomO{7}, let $x\in S$, and let $u'\in L_x$.
Then there exists $w\in S_\soft$ such that
\[
u'\ll w\leq x\leq \infty w,\andSep 
\widehat{w} = \widehat{x}.
\]
\end{thm}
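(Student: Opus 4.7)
The plan is to reduce to the countably based setting via a Löwenheim--Skolem argument, apply \autoref{prp:FuncSupSoft}, and then upgrade the resulting identity from sups over $L_x$ to sups over arbitrary $x'\ll x$ by means of \autoref{prp:ImprovSigmaDomination}. Concretely, I would combine \autoref{prp:PassingSoftElementToSubCuSgp} with the standard Löwenheim--Skolem reflection results for \axiomO{5}-\axiomO{7} and $(2,\omega)$-divisibility to obtain a countably based sub-\CuSgp{} $T\subseteq S$ that contains $u'$ and $x$ together with a witness $u\in T_\soft$ satisfying $u'\ll u\ll x$, and such that $T$ itself satisfies \axiomO{5}-\axiomO{7} and $(2,\omega)$-divisibility. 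Since the inclusion $T\hookrightarrow S$ is a \CuMor{}, any element strongly soft in $T$ is also strongly soft in $S$, and every $\lambda\in F(S)$ restricts to a functional $\lambda|_T\in F(T)$.

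Inside $T$, \autoref{prp:FuncSupSoft} produces $w\in T_\soft\subseteq S_\soft$ with $u'\ll w\leq x\leq\infty w$ and
\[
\mu(w)=\sup_{v'\in L_x^T}\mu(v')
\]
for every $\mu\in F(T)$, where $L_x^T$ denotes the set $L_x$ computed inside $T$. Since $w\leq x$, we have $\lambda(w)\leq\lambda(x)$ for every $\lambda\in F(S)$; the case $\lambda(w)=\infty$ therefore forces $\lambda(x)=\infty=\lambda(w)$, so the only remaining task is to prove $\lambda(x)\leq\lambda(w)$ under the assumption $\lambda(w)<\infty$.

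I expect this last step to be the main technical core. For every $x'\ll x$ and $t'\ll w$ in $T$, \autoref{prp:ImprovSigmaDomination} applied inside $T$ with $t=w\in T_\soft$ (and using $x\leq\infty w$) produces a strongly soft $v'\in L_x^T$ with $x'+t'\leq v'+w$. Evaluating $\lambda$ and using $\lambda(v')\leq\sup_{v''\in L_x^T}\lambda(v'')=\lambda(w)$ via the identity above applied to $\lambda|_T$, one obtains
\[
\lambda(x')+\lambda(t')\leq\lambda(v')+\lambda(w)\leq 2\lambda(w).
\]
Because $\lambda(w)<\infty$, lower semicontinuity of $\lambda|_T$ lets us choose $t'\ll w$ with $\lambda(t')$ arbitrarily close to $\lambda(w)$, forcing $\lambda(x')\leq\lambda(w)$. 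Taking the supremum over a $\ll$-increasing sequence in $T$ converging to $x$ then yields $\lambda(x)\leq\lambda(w)$. The key subtlety is that \autoref{prp:ImprovSigmaDomination} must be applied \emph{inside} $T$, so that the resulting $v'$ lies in $L_x^T$ and its $\lambda$-value is controlled by the identity of \autoref{prp:FuncSupSoft}; carrying the argument out directly in $S$ would yield $v'\in L_x^S$, whose $\lambda$-value we cannot \emph{a priori} bound by $\lambda(w)$.
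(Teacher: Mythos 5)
Your proposal is correct and takes essentially the same route as the paper's proof: the paper likewise reduces to the countably based case via \autoref{prp:PassingSoftElementToSubCuSgp} and the L\"{o}wenheim--Skolem technique, obtains $w$ from \autoref{prp:FuncSupSoft}, and applies \autoref{prp:ImprovSigmaDomination} (with $t=w$, inside the countably based semigroup, exactly as you stress) to get $x'+w'\leq v'+w$ with $v'\in L_x$ and $\lambda(v')\leq\lambda(w)$. The only cosmetic difference is the final cancellation: the paper passes to suprema to obtain $\lambda(x)+\lambda(w)\leq 2\lambda(w)$ and cancels the finite $\lambda(w)$, whereas you choose $t'\ll w$ with $\lambda(t')$ close to $\lambda(w)$ --- these are equivalent.
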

\begin{proof}
We first prove the result under the additional assumption that $S$ is countably based. 
Use \autoref{prp:FuncSupSoft} to obtain $w\in S_\soft$ such that
\[
u' \ll w \leq x \leq \infty w, \andSep
\lambda (w)=\sup_{v'\in L_x} \lambda (v'),
\]
for every $\lambda \in F(S)$.
Since $w\leq x$, we have $\widehat{w} \leq \widehat{x}$.
To show the reverse inequality, let $\lambda\in F(S)$.
We need to prove that $\lambda(x) \leq \lambda(w)$.
 
Take $x',w'\in S$ such that $x' \ll x$ and $w'\ll w$. 
Applying \autoref{prp:ImprovSigmaDomination}, we obtain an element $v'\in L_x$ such that 
\[
x'+w'\leq v'+w.
\]
 
Since $v'$ belongs to $L_x$, we have $\lambda (v')\leq \lambda (w)$. 
This implies
\[
\lambda (x')+\lambda (w')\leq \lambda (v')+\lambda (w)
\leq 2\lambda (w).
\]
Passing to the supremum over all $x'$ way-below $x$, and all $w'$ way-below $w$, we get
\[
\lambda (x)+\lambda (w)
\leq 2\lambda (w).
\]

This proves $\lambda (x)\leq \lambda (w)$.
Indeed, if $\lambda (w)=\infty$, then there is nothing to prove. 
If $\lambda (w)\neq \infty$, we can cancel $\lambda (w)$ from the previous inequality.

\medskip

We now consider the case that $S$ is not countably based.
Choose $u \in S_\soft$ such that $u' \ll u \ll x$.
Since $(2,\omega)$-divisibility and \axiomO{5}-\axiomO{7} each satisfy the L{\"o}wenheim-Skolem condition, and using also \autoref{prp:PassingSoftElementToSubCuSgp}, we can use the technique from \cite[Section~5]{ThiVil21DimCu2} to deduce that there exists a countably based, $(2,\omega )$-divisible sub-\CuSgp{} $H \subseteq S$ satisfying \axiomO{5}-\axiomO{7}, containing $x$, $u$ and $u'$, and such that $u$ is strongly soft in $H$.

Applying the first part of the proof to $H$, we find $w\in H_\soft$ such that
\[
u'\ll w\leq x\leq \infty w,\andSep 
\lambda (x)=\lambda (w)
\]
for every $\lambda \in F(H)$.
 
Since the inclusion $\iota\colon H\to S$ is a \CuMor{}, it follows that $w$ is strongly soft in $S$. 
Further, any functional $\lambda$ on $S$ induces the functional $\lambda\iota$ on $H$. 
This shows that $w$ satisfies the required conditions.
\end{proof}

\begin{thm}
\label{prp:SoftRankCuGGP}
Let $A$ be a stable \ca{} with the Global Glimm Property. 
Then, for any $a\in A_+$ there exists a soft element $b \in A_+$ with $b\precsim a$ and such that 
\[
d_\tau(a) = d_\tau(b)
\]
for every $\tau \in \QT(A)$.
\end{thm}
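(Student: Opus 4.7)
The plan is to deduce this from the abstract semigroup-level statement \autoref{prp:SoftRankS}, followed by the existence of soft operator representatives for strongly soft Cuntz classes provided by \autoref{prp:CharSoftCuClassGGP}. Since $A$ has the Global Glimm Property, $\Cu(A)$ is $(2,\omega)$-divisible by the characterization recalled in \autoref{pgr:GGPNSCa}, and $\Cu(A)$ automatically satisfies \axiomO{5}-\axiomO{7}, so the hypotheses of \autoref{prp:SoftRankS} hold with $S := \Cu(A)$.

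Set $x := [a] \in \Cu(A)$. The zero element is trivially strongly soft and satisfies $0 \ll 0 \ll x$, so $0 \in L_x$. Applying \autoref{prp:SoftRankS} with $u' = 0$ produces $w \in \Cu(A)_\soft$ with $w \leq x \leq \infty w$ and $\widehat{w} = \widehat{x}$ in $\LAff(F(\Cu(A)))$. By \autoref{prp:CharSoftCuClassGGP}, there exists a soft element $b' \in (A \otimes \KK)_+$ with $[b'] = w$. Because $A$ is stable, a choice of isomorphism $A \cong A \otimes \KK$ transports $b'$ to a soft element $b \in A_+$ still representing $w$; softness is preserved because it depends only on the hereditary sub-\ca{} generated by the element. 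The relation $[b] = w \leq x = [a]$ then gives $b \precsim a$.

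The final step is to translate the equality $\widehat{w} = \widehat{x}$ of rank functions into the equality of dimension functions indexed by $\QT(A)$. Under the natural isomorphism $\QT(A) \cong F(\Cu(A))$ from \cite[Theorem~4.4]{EllRobSan11Cone}, each $\tau \in \QT(A)$ corresponds to a functional $\lambda_\tau$ on $\Cu(A)$ satisfying $\lambda_\tau([c]) = d_\tau(c)$ for every $c \in (A \otimes \KK)_+$. Hence
\[
d_\tau(b) = \lambda_\tau(w) = \widehat{w}(\lambda_\tau) = \widehat{x}(\lambda_\tau) = \lambda_\tau(x) = d_\tau(a)
\]
for every $\tau \in \QT(A)$, which finishes the argument. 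The substantive content sits entirely in \autoref{prp:SoftRankS}; the remaining steps are purely a matter of passing between the \ca{} and its Cuntz semigroup via \autoref{prp:CharSoftCuClassGGP} and the $\QT$-$F(\Cu)$ duality, so I expect no further obstacle.
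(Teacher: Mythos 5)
Your proposal follows the paper's own route: apply \autoref{prp:SoftRankS} to $x=[a]$ (your observation that $0\in L_x$, so one may take $u'=0$, is fine), pass to a soft representative, and finish with the identification $\QT(A)\cong F(\Cu(A))$ from \cite[Theorem~4.4]{EllRobSan11Cone}. The one place where your argument, as written, has a gap is the transport step. An arbitrary isomorphism $\theta\colon A\otimes\KK\to A$ induces, after the canonical identification $\Cu(A)\cong\Cu(A\otimes\KK)$, an automorphism $\alpha$ of $\Cu(A)$ that need not be the identity; so $b:=\theta(b')$ is indeed soft (that part of your reasoning is correct), but it represents $\alpha(w)$, not necessarily $w$. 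Since you need $[b]\leq[a]$ and $\widehat{[b]}=\widehat{[a]}$ for this particular $w$, ``a choice of isomorphism'' must be a careful choice, not any choice.

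The gap is repairable in two standard ways. First, one can take the isomorphism built from a sequence of isometries $t_n\in M(A)$ with $\sum_n t_nt_n^*=1$ strictly (these exist because $A$ is stable), namely $\phi(a\otimes e_{jk})=t_jat_k^*$; identifying $A$ with $A\otimes e_{11}\subseteq A\otimes\KK$, one computes $\phi(x)\otimes e_{11}=T^*xT$ for the multiplier $T=\sum_n t_n^*\otimes e_{n1}$, which satisfies $TT^*=1$, so that $\phi(x)\otimes e_{11}=(x^{1/2}T)^*(x^{1/2}T)\sim (x^{1/2}T)(x^{1/2}T)^*=x$, and hence this particular $\phi$ preserves Cuntz classes. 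Second, and this is what the paper actually does, one can bypass \autoref{prp:CharSoftCuClassGGP} entirely: since $A$ is stable, $w$ has a representative $a'\in A_+$, and \autoref{prp:SoftStSoftEq} then applies directly --- $A$ has an abundance of soft elements by the Global Glimm Property and weak stable rank one by stability --- yielding a completely soft $b\in A_+$ with $[b]=w$. (Note that \autoref{prp:CharSoftCuClassGGP} is itself just \autoref{prp:SoftStSoftEq} applied to $A\otimes\KK$, so this route is shorter; also the paper quietly uses the same stability fact to feed $w$ into \autoref{prp:SoftStSoftEq}, so your instinct that stability is the bridge was right.) With either repair, the rest of your argument --- $b\precsim a$ from $[b]\leq[a]$, and $d_\tau(b)=d_\tau(a)$ via the duality --- is correct and matches the paper.
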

\begin{proof}
Let $a\in A_+$. Since $A$ has the Global Glimm Property, it follows from \cite[Theorem~3.6]{ThiVil23Glimm} that $\Cu (A)$ is $(2,\omega )$-divisible. 
Using \autoref{prp:SoftRankS}, find $w\in \Cu (A)_\soft$ such that $w\leq [a]$ and $\lambda (w)=\lambda ([a])$ for every $\lambda\in F(\Cu (A))$.

By \autoref{prp:SoftStSoftEq}, there exists a soft element $b\in A_+$ such that $w=[b]$. The result now follows from the fact that the map 
\[
\tau\mapsto \left([a]\mapsto d_\tau(a)\right)
\]
is a natural bijection from ${\rm QT} (A)$ to $F(\Cu (A))$;
see \cite[Theorem~4.4]{EllRobSan11Cone}.
\end{proof}

\begin{lma}
\label{prp:EqSupSoft}
Let $S$ be a $(2,\omega )$-divisible \CuSgp{} $S$ satisfying \axiomO{5}, let $x\in S$, and let $\lambda \in F(S)$.
Then
\[
\sup_{\{ v\in S_\soft : v\leq x \}}\lambda (v)=\sup_{v'\in L_x} \lambda (v')
\]
\end{lma}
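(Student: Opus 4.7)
My plan is to establish the equality by proving both inequalities separately, with most of the work going into the $\leq$ direction.

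For the easy direction ($\geq$), I would observe that any $v' \in L_x$ by definition admits $u \in S_\soft$ with $v' \ll u \ll x$. Since $u \ll x$ gives $u \leq x$, the element $u$ itself lies in the indexing set of the left-hand supremum, and since $v' \leq u$ the functional $\lambda$ satisfies $\lambda(v') \leq \lambda(u)$, which gives the bound immediately.

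For the reverse inequality, the strategy is to show that every strongly soft $v \leq x$ can be approximated from below (in the $\lambda$-value sense) by elements that actually lie in $L_x$. The key input is \autoref{prp:SoftPartAxioms}: since $S$ is $(2,\omega)$-divisible and satisfies \axiomO{5}, the subset $S_\soft$ is a sub-\CuSgp{}. Consequently, any $v \in S_\soft$ with $v \leq x$ can be written as the supremum of a $\ll$-increasing sequence $(v_n)_n$ in $S_\soft$, and (since the inclusion $S_\soft\hookrightarrow S$ is a \CuMor{}) this sequence is also $\ll$-increasing and has the same supremum when viewed in $S$.

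The heart of the argument is then the verification that each $v_n$ already belongs to $L_x$, taking $u := v_{n+1}$ as the witnessing strongly soft element. Indeed, $v_{n+1} \in S_\soft$ by construction, $v_n \ll v_{n+1}$ in $S$, and $v_{n+1} \ll v_{n+2} \leq v \leq x$ combined with the standard fact that $\ll$ composed with $\leq$ yields $\ll$ gives $v_{n+1} \ll x$. Hence $v_n \in L_x$ for every $n$, so $\lambda(v_n) \leq \sup_{v' \in L_x} \lambda(v')$, and passing to the supremum using that $\lambda$ preserves suprema of increasing sequences yields $\lambda(v) \leq \sup_{v' \in L_x} \lambda(v')$. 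Taking the supremum over all such $v$ completes the proof.

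I do not expect any real obstacles beyond bookkeeping; the only subtle point is making sure that the $\ll$-relation transfers correctly between $S_\soft$ and $S$, which is handled transparently by the sub-\CuSgp{} structure. Notably, this argument uses only the hypotheses stated in the lemma ($(2,\omega)$-divisibility and \axiomO{5}) and does not invoke the stronger axioms \axiomO{6}-\axiomO{7} needed elsewhere in the section.
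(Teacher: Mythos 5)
Your proposal is correct and takes essentially the same route as the paper: the inequality $\geq$ follows from the strongly soft witness $u$ with $v' \leq u \leq x$, and the inequality $\leq$ from writing $v \in S_\soft$ as the supremum of a $\ll$-increasing sequence in $S_\soft$ (via \autoref{prp:SoftPartAxioms}) whose terms all lie in $L_x$, then using that $\lambda$ preserves suprema of increasing sequences. Your explicit check that $v_n \in L_x$ with witness $v_{n+1}$, and that $\ll$ and suprema transfer correctly along the sub-\CuSgp{} inclusion, merely spells out details the paper leaves implicit.
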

\begin{proof}
Given $v' \in L_x$, there exists $v \in S_\soft$ with $v' \leq v \leq x$, which shows the inequality `$\geq$'.

Conversely, let $v \in S_\soft$ with $v \leq x$.
Since $S_\soft$ is a sub-\CuSgp{} by \autoref{prp:SoftPartAxioms}, there exists a $\ll$-increasing sequence $(v_n')_n$ in $S_\soft$ with supremum~$v$.
Each $v_n'$ belongs to $L_x$, and one gets 
\[
\lambda(v) 
= \sup_n \lambda(v_n') 
\leq \sup_{v'\in L_x} \lambda (v').
\]
This shows the the inequality `$\leq$'.
\end{proof}

We will prove in \autoref{prp:FScongFSSoft} that the inclusion $\iota\colon S_\soft\to S$ induces a homeomorphism $\iota^*\colon F(S)\to F(S_\soft)$. 
The inverse of $\iota^*$ is constructed in the next result.

\begin{prp}
\label{prp:SoftifiedLambda}
Let $S$ be a $(2,\omega)$-divisible \CuSgp{} satisfying \axiomO{5}-\axiomO{7}, and let $\lambda \in F(S_\soft )$. 
Then $\lambda_\soft \colon S \to [0,\infty]$ given by
\[
\lambda_\soft(x) := \sup_{\{ v\in S_\soft : v\leq x \}} \lambda (v)
\]
for $x \in S$, is a functional on $S$.
\end{prp}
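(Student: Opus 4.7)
The plan is to verify directly that $\lambda_\soft$ satisfies the three defining properties of a functional on $S$: it is additive with $\lambda_\soft(0) = 0$, it preserves the order, and it preserves suprema of increasing sequences. Order-preservation and $\lambda_\soft(0) = 0$ are immediate from the definition as a supremum over $\{v \in S_\soft : v \leq x\}$.

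For preservation of suprema of increasing sequences $(x_n)$ with supremum $x$, the inequality $\lambda_\soft(x) \geq \sup_n \lambda_\soft(x_n)$ follows from order-preservation. For the converse, I would take $v \in S_\soft$ with $v \leq x$, and use that $S_\soft$ is a sub-\CuSgp{} (\autoref{prp:SoftPartAxioms}) to write $v = \sup_k v_k$ with $(v_k)_k$ $\ll$-increasing in $S_\soft$. Each $v_k \ll v \leq \sup_n x_n$ yields $v_k \leq x_{n(k)}$ for some $n(k)$, so $\lambda(v_k) \leq \lambda_\soft(x_{n(k)}) \leq \sup_n \lambda_\soft(x_n)$. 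Passing to the supremum over $k$ (using that $\lambda$ preserves suprema of increasing sequences in $S_\soft$) and then over $v$ gives the required inequality.

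For additivity, the inequality $\lambda_\soft(x+y) \geq \lambda_\soft(x) + \lambda_\soft(y)$ follows easily from the fact that $S_\soft$ is closed under addition (\autoref{prp:SoftPartAxioms}): for $u \leq x$ and $v \leq y$ in $S_\soft$, one has $u+v \in S_\soft$ and $u+v \leq x+y$, so $\lambda(u) + \lambda(v) = \lambda(u+v) \leq \lambda_\soft(x+y)$. For the reverse, it suffices to show $\lambda(w') \leq \lambda_\soft(x) + \lambda_\soft(y)$ for each $w \in S_\soft$ with $w \leq x+y$ and each $w' \ll w$ in $S_\soft$. For such $w'$, I would apply \cite[Proposition~4.13]{ThiVil23arX:Soft} (strong softness of $w$) to obtain $s \in S_\soft$ with $w' + s \leq w$ and $w' \leq \infty s$. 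Then \axiomO{6} applied to $w'' \ll w' \leq x+y$ (with $w'' \in S_\soft$) gives $\alpha \leq w', x$ and $\beta \leq w', y$ with $w'' \leq \alpha+\beta$, and since $\alpha, \beta \leq w' \leq \infty s$, \autoref{prp:ImprovSigmaDomination} produces strongly soft $\tilde\alpha \leq \alpha \leq x$ and $\tilde\beta \leq \beta \leq y$ satisfying $\alpha' + s' \leq \tilde\alpha + s$ and $\beta' + s' \leq \tilde\beta + s$ for appropriate $\alpha' \ll \alpha$, $\beta' \ll \beta$, and $s' \ll s$ in $S_\soft$. Choosing $\alpha', \beta'$ so that also $w''' \leq \alpha' + \beta'$ for some $w''' \ll w''$ in $S_\soft$ (possible via the approximation $\alpha + \beta = \sup(\alpha'' + \beta'')$ and the way-below property), one obtains $w''' + 2s' \leq \tilde\alpha + \tilde\beta + 2s$ in $S_\soft$. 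Applying $\lambda$ and passing to the supremum over $s' \ll s$ yields $\lambda(w''') + 2\lambda(s) \leq \lambda(\tilde\alpha) + \lambda(\tilde\beta) + 2\lambda(s)$; cancelling $2\lambda(s)$ when finite gives $\lambda(w''') \leq \lambda_\soft(x) + \lambda_\soft(y)$, and the case $\lambda(s) = \infty$ forces $\lambda(w) = \infty$ and can be handled separately by showing that the same refinement forces $\lambda_\soft(x) + \lambda_\soft(y) = \infty$.

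The main obstacle is this additivity direction, and the key technical difficulty is interleaving the $\ll$-chains correctly so that both pieces of the \axiomO{6}-decomposition can be softified simultaneously via \autoref{prp:ImprovSigmaDomination}, with the auxiliary softness witness $s$ playing the role of a common error term that cancels in the limit.
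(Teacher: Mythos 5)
Your overall architecture is the paper's: the easy inequality $\lambda_\soft(x)+\lambda_\soft(y)\leq\lambda_\soft(x+y)$ and the preservation of suprema are handled the same way (your direct soft-approximation argument for suprema is a cosmetic variant of the paper's route through $L_x$ and \autoref{prp:EqSupSoft}, and your \axiomO{6}-decomposition is a harmless substitute for the paper's direct choice of $x'\ll x$, $y'\ll y$ with $w'\ll x'+y'$), and the hard direction is attacked, as in the paper, by producing a soft error term, softifying the two halves via \autoref{prp:ImprovSigmaDomination}, and cancelling. But there is a genuine gap in your infinite case. Your error term $s$ comes from strong softness of $w$ itself, so all you know is $s\leq w\leq x+y$; when $\lambda(s)=\infty$ your inequality $\lambda(w''')+2\lambda(s')\leq\lambda(\tilde\alpha)+\lambda(\tilde\beta)+2\lambda(s)$ is vacuous, and the claim that ``the same refinement forces $\lambda_\soft(x)+\lambda_\soft(y)=\infty$'' does not follow. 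Even granting soft elements $u\leq x\leq\infty u$ and $v\leq y\leq\infty v$ (which you never construct --- you do not invoke \autoref{prp:SoftRankS} at all), the containment $s\leq x+y\leq\infty(u+v)$ only yields, for each soft $s''\ll s$, some $n_{s''}$ with $s''\leq n_{s''}(u+v)$, and the $n_{s''}$ need not be bounded; this is perfectly consistent with $\lambda(s)=\sup_{s''}\lambda(s'')=\infty$ while $\lambda(u)+\lambda(v)<\infty$. What is needed is a \emph{single} finite $n_0$ with (error term) $\leq n_0(u+v)$, and for that the error term must sit under a fixed element way-below $x+y$. The paper arranges exactly this: it picks the soft witnesses via \cite[Proposition~7.7]{ThiVil23arX:Soft} below fixed approximants $x''\ll x$ and $y''\ll y$, so that $s+t\leq x''+y''\ll x+y$, and then invokes \autoref{prp:SoftRankS} to get soft $u,v$ with $x\leq\infty u$, $y\leq\infty v$, whence $s+t\leq n_0(u+v)$; then $\lambda(s+t)=\infty$ forces $\lambda(u)+\lambda(v)=\infty$ and hence $\lambda_\soft(x)+\lambda_\soft(y)=\infty$.

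Your variant is repairable along the same lines --- apply \cite[Proposition~4.13]{ThiVil23arX:Soft} one step down the chain, to $w''\ll w'$, so that the witness satisfies $s\leq w'\ll w\leq x+y$ and hence lies under a fixed element way-below $x+y$, and import \autoref{prp:SoftRankS} for the uniform bound --- but as written the infinite case is a missing idea, not a routine verification. One further, fixable, slip: when you ``pass to the supremum over $s'\ll s$'' you hold $\tilde\alpha,\tilde\beta$ fixed on the right, yet they were produced by \autoref{prp:ImprovSigmaDomination} \emph{from} $s'$ and vary with it; one should first bound $\lambda(\tilde\alpha)+\lambda(\tilde\beta)\leq\lambda_\soft(x)+\lambda_\soft(y)$ and only then take the supremum over $s'$. (The paper's own write-up has the same wrinkle with its $u,v$, repaired in the same way.)
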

\begin{proof}
It is easy to see that $\lambda_\soft$ preserves order.
Further, given an increasing sequence $(x_n)_n$ with supremum $x$ in $S$, we have that for every $v'\in L_x$ there exists $n\in\NN$ with $v'\in L_{x_n}$. 
Thus, using \autoref{prp:EqSupSoft}, we get
\[
\lambda_\soft (x) 
= \sup_{v'\in L_x}\lambda (v')
\leq \sup_n \left(\sup_{v'\in L_{x_n}}\lambda (v') \right)
=\sup_n \lambda_\soft (x_n).
\]

Since $\lambda_\soft$ is order-preserving, we also have $\sup_n \lambda_\soft (x_n)\leq \lambda_\soft (x)$, which shows that $\lambda_\soft$ preserves suprema of increasing sequences.

\medskip
 
Given $x,y\in S$ and $u,v\in S_\soft$ such that $u\leq x$ and $v\leq y$, we have $u+v\in S_\soft$ and $u+v\leq x+y$. 
This implies that 
\[
\lambda_\soft (x)+\lambda_\soft (y)\leq \lambda_\soft (x+y).
\]
Thus, $\lambda_\soft$ is subadditive.

\medskip

Finally, we show that $\lambda_\soft$ is superadditive.
Given $x,y\in S$ and $w'\in L_{x+y}$, take $x',x'',y',y''\in S$ such that 
\[
x'\ll x''\ll x,\quad
y'\ll y''\ll y,\andSep
w'\ll x'+y'.
\]

By \cite[Proposition~7.7]{ThiVil23arX:Soft}, there exist $s,t\in S_\soft$ such that
\[
s\leq x''\leq \infty s, \andSep
t\leq y''\leq \infty t.
\]
 
Take $s',t'\in S$ such that $s'\ll s$ and $t'\ll t$. 
Using \autoref{prp:ImprovSigmaDomination}, we find $u'\in L_x$ and $v'\in L_y$ such that 
\[
x'+s'\leq u'+s,\andSep 
y'+t'\leq v'+t.
\]
 
Consequently, one has 
\[
w'+s'+t'
\leq x'+y'+s'+t'
\leq u'+s+v'+t.
\]
 
Applying \autoref{prp:SoftRankS}, find $u,v\in S_\soft$ such that 
\[
u'\ll u\leq x\leq \infty u,\andSep 
v'\ll v\leq y\leq \infty v.
\]
 
This implies
\[
w'+s'+t'
\leq u+s+v+t
\] 
and, therefore, 
\[
\lambda (w')+\lambda (s' + t')
\leq \lambda (u)+\lambda (v)+\lambda (s+t).
\]

Passing to the suprema over all $s'$ way-below $s$, and all $t'$ way-below $t$, we deduce that 
\[
\lambda (w')+\lambda (s + t)
\leq \lambda (u)+\lambda (v)+\lambda (s+t).
\]
Note that $s+t\leq x''+y'' \ll x+y \leq \infty (u+v)$.
This allows us to cancel $\lambda (s + t)$, and we obtain
\[
\lambda (w')
\leq \lambda (u)+\lambda (v)
\leq \lambda_\soft (x)+\lambda_\soft (y).
\]

Since this holds for every $w'\in L_{x+y}$, we can apply \autoref{prp:EqSupSoft} to get
\[
\lambda_\soft(x+y)
= \sup_{\{ w\in S_\soft : w \leq x+y \}}\lambda(w)
= \sup_{w'\in L_{x+y}} \lambda(w')
\leq \lambda_\soft (x)+\lambda_\soft (y).
\]
This show that $\lambda_\soft$ is superadditive, and thus a functional.
\end{proof}

\begin{thm}
\label{prp:FScongFSSoft}
Let $S$ be a $(2,\omega)$-divisible \CuSgp{} satisfying \axiomO{5}-\axiomO{7}. 
Let $\iota\colon S_\soft\to S$ be the canonical inclusion. 
Then the map $\iota^*\colon F(S)\to F(S_\soft)$ given by $\iota^*(\lambda):=\lambda\circ\iota$ is a natural homemomorphism.
\end{thm}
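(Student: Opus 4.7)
The plan is to exhibit $\lambda \mapsto \lambda_\soft$ from \autoref{prp:SoftifiedLambda} as a two-sided inverse of $\iota^\ast$, then use a compactness argument to upgrade the resulting bijection to a homeomorphism.

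First I would verify the two composition identities. For $\lambda \in F(S_\soft)$ and $x \in S_\soft$, the element $x$ itself is a strongly soft element with $x \leq x$, so the supremum defining $\lambda_\soft(x)$ contains $\lambda(x)$; monotonicity of $\lambda$ gives the reverse inequality, hence $(\iota^\ast \lambda_\soft)(x) = \lambda_\soft(x) = \lambda(x)$. Conversely, for $\mu \in F(S)$ and $x \in S$, \autoref{prp:SoftRankS} (applied after choosing any $u' \in L_x$, which is nonempty because $S_\soft$ is a sub-\CuSgp{} and $S$ is $(2,\omega)$-divisible) produces $w \in S_\soft$ with $w \leq x$ and $\widehat{w} = \widehat{x}$. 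Thus $\mu(w) = \mu(x)$, and since $\mu$ restricted to $S_\soft$ is a functional of $S_\soft$, one gets
\[
(\mu|_{S_\soft})_\soft(x) = \sup_{\{v \in S_\soft : v \leq x\}} \mu(v) = \mu(x),
\]
where the last equality uses monotonicity together with the witness $w$. This shows that $\iota^\ast$ is a bijection, with inverse $\lambda \mapsto \lambda_\soft$.

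Next I would address continuity. Since $S$ and $S_\soft$ both satisfy \axiomO{5} (by \autoref{prp:SoftPartAxioms}), the spaces $F(S)$ and $F(S_\soft)$ are compact Hausdorff cones in their natural topology, in which $\lambda_i \to \lambda$ iff $\lambda(x') \leq \liminf_i \lambda_i(x')$ and $\limsup_i \lambda_i(x'') \leq \lambda(x)$ for all $x' \ll x'' \ll x$. The map $\iota^\ast$ is obviously continuous, since it is just restriction along the inclusion and the topology is controlled by evaluation at individual elements. Continuity together with the bijectivity established above, plus compact-to-Hausdorff, forces $\iota^\ast$ to be a homeomorphism.

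Finally, for naturality I would let $\varphi\colon S \to T$ be a \CuMor{} between $(2,\omega)$-divisible \CuSgp{s} satisfying \axiomO{5}-\axiomO{7}. Because \CuMor{s} preserve strong softness, $\varphi$ restricts to a \CuMor{} $\varphi_\soft\colon S_\soft \to T_\soft$, and the square
\[
\xymatrix{
F(T) \ar[r]^{\iota_T^\ast} \ar[d]_{\varphi^\ast} & F(T_\soft) \ar[d]^{\varphi_\soft^\ast} \\
F(S) \ar[r]_{\iota_S^\ast} & F(S_\soft)
}
\]
commutes by a direct computation on a functional $\lambda \in F(T)$, since both compositions send it to $\lambda \circ \varphi \circ \iota_S = \lambda \circ \iota_T \circ \varphi_\soft$. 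The only step that really uses the hypotheses is the surjectivity of $\iota^\ast$, and the main obstacle there is precisely the availability of a soft element $w \leq x$ with $\widehat{w} = \widehat{x}$; this is handled by invoking \autoref{prp:SoftRankS}, which has already done the hard work.
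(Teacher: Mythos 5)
Your proposal is correct and follows essentially the same route as the paper: both exhibit $\lambda \mapsto \lambda_\soft$ from \autoref{prp:SoftifiedLambda} as a two-sided inverse of $\iota^*$, with surjectivity resting on \autoref{prp:SoftRankS} (the existence of $w \in S_\soft$ with $w \leq x$ and $\widehat{w} = \widehat{x}$), and then upgrade the continuous bijection to a homeomorphism via compactness of $F(S)$ and Hausdorffness of $F(S_\soft)$. Your explicit verification of naturality is a small addition the paper leaves implicit, and your aside about the nonemptiness of $L_x$ is harmless since $0$ is strongly soft and so $0 \in L_x$ always.
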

\begin{proof}
Given $\lambda \in F(S_\soft)$, let $\lambda_\soft \in F(S)$ be defined as in \autoref{prp:SoftifiedLambda}.
This defines a map $\phi \colon F(S_\soft )\to F(S)$ by $\phi(\lambda) :=\lambda_\soft$. 
We verify that $\iota^*\phi=\id_{F(S_\soft )}$ and $\phi\iota^*=\id_{F(S)}$. 

Given $\lambda \in F(S_\soft )$ and $w\in S_\soft$, we have 
\[
\iota^*\phi(\lambda )(w) 
= \iota^*\lambda_\soft (w) 
= \lambda_{\soft} (\iota (w))
= \sup_{\{ v\in S_\soft : v\leq w \}} \lambda (v) 
= \lambda (w),
\]
which shows $\iota^*\phi=\id_{F(S_\soft )}$.
 
Conversely, if $\lambda \in F(S)$ and $x\in S$, we can use \autoref{prp:SoftRankS} at the last step to obtain 
\[
\phi\iota^*(\lambda)(x)
= \phi (\lambda \iota )(x) 
= \sup_{\{ v\in S_\soft : v\leq x \}}\lambda (v) 
= \lambda (x).
\]

This shows that $\iota^*$ is a bijective, continuous map. 
Since $F(S)$ and $F(S)_\soft$  are both compact, Hausdorff spaces, it follows that $\iota^*$ is a homemorphism.
\end{proof}

Since simple, nonelementary \ca{s} automatically have the Global Glimm Property, the next result can be considered as a generalization of \cite[Lemma~3.8]{Phi14arX:LargeSub} to the non-simple setting.

\begin{thm}
\label{prp:QTAEqFCuASof}
Let $A$ be a \ca{} with the Global Glimm Property. 
Then $\QT(A)$ is naturally homemomorphic to $F(\Cu (A)_\soft )$.
\end{thm}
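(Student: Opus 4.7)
The plan is to combine two already-established natural homeomorphisms. First, by \cite[Theorem~4.4]{EllRobSan11Cone}, for any \ca{} $A$ the cone $\QT(A)$ is naturally homeomorphic to $F(\Cu(A))$ via the assignment $\tau \mapsto ([a] \mapsto d_\tau(a))$. Second, \autoref{prp:FScongFSSoft} tells us that whenever $S$ is a $(2,\omega)$-divisible \CuSgp{} satisfying \axiomO{5}--\axiomO{7}, the inclusion $\iota\colon S_\soft \to S$ induces a natural homeomorphism $\iota^*\colon F(S)\to F(S_\soft)$.

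To apply the second result with $S = \Cu(A)$, I need to verify its hypotheses. The Cuntz semigroup of any \ca{} is a \CuSgp{} satisfying \axiomO{5}--\axiomO{7} (see \autoref{pgr:CuSgps}). Moreover, since $A$ has the Global Glimm Property, \cite[Theorem~3.6]{ThiVil23Glimm} gives that $\Cu(A)$ is $(2,\omega)$-divisible. Hence \autoref{prp:FScongFSSoft} applies and yields a natural homeomorphism $F(\Cu(A)) \cong F(\Cu(A)_\soft)$.

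Composing the two natural homeomorphisms gives
\[
\QT(A) \;\cong\; F(\Cu(A)) \;\cong\; F(\Cu(A)_\soft),
\]
and naturality is inherited from each factor. There is essentially no obstacle here: the substantive work has already been done in \autoref{prp:FScongFSSoft} (which in turn relied on \autoref{prp:SoftRankS} and the construction of the softified functional $\lambda_\soft$ in \autoref{prp:SoftifiedLambda}), so the present theorem is a clean corollary obtained by specializing $S = \Cu(A)$ and invoking the Elliott--Robert--Santiago identification of quasitraces with functionals on the Cuntz semigroup.
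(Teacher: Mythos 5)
Your proof is correct and follows exactly the same route as the paper: the paper's proof likewise combines \autoref{prp:FScongFSSoft} with the identification $\QT(A) \cong F(\Cu(A))$ from \cite[Theorem~4.4]{EllRobSan11Cone}. You additionally spell out the verification that $\Cu(A)$ satisfies the hypotheses of \autoref{prp:FScongFSSoft} (via \axiomO{5}--\axiomO{7} and \cite[Theorem~3.6]{ThiVil23Glimm}), which the paper leaves implicit.
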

\begin{proof}
The result follows from \autoref{prp:FScongFSSoft} and the fact that $\QT(A)$ is naturally homemomorphic to $F(\Cu (A))$; 
see \cite[Theorem~4.4]{EllRobSan11Cone}.
\end{proof}

\section{Retraction onto the soft part of a Cuntz semigroup}
\label{sec:retract}

Let $S$ be a countably based, left-soft separative, $(2,\omega)$-divisible \CuSgp{} satisfying \axiomO{5}-\axiomO{7}. 
Given any $x\in S$, we have seen in \autoref{prp:PreLxDirected} that $L_x$ is upward-directed. 
It then follows from \cite[Remarks~3.1.3]{AntPerThi18TensorProdCu} that $L_x$ has a supremum, which justifies the following:

\begin{dfn}
\label{dfn:sigma}
Let $S$ be a countably based, left-soft separative, $(2,\omega )$-divisible \CuSgp{} satisfying \axiomO{5}-\axiomO{7}. 
We define $\sigma \colon S \to S$ by 
\[
\sigma (x) 
:= \sup L_x 
= \sup \big\{ u'\in S : u' \ll u \ll x \text{ for some } u\in S_\soft \big\}
\]
for $x \in S$.
\end{dfn}

We will see in \autoref{prp:sigmaSoft} that $\sigma(x)$ is the largest strongly soft element dominated by $x$.
Therefore, we often view $\sigma$ as a map $S \to S_\soft$.
In \autoref{prp:SsoftGenCuMor} we show that $\sigma$ is close to being a generalized \CuMor{}, and in \autoref{prp:SigmaCuMor} we give sufficient conditions ensuring that it is.

If $A$ is a separable \ca{} satisfying the Global Glimm Property and with left-soft separative Cuntz semigroup, then $\Cu(A)$ satisfies the assumptions of \autoref{dfn:sigma}. If $A$ also has stable rank one or strict comparison of positive elements, then $\sigma \colon \Cu(A) \to \Cu(A)_\soft$ is a generalized \CuMor{};
see \autoref{prp:SigmaCuMorCStar}. 
Then $\Cu(A)_\soft$ is a \emph{retract} of $S$;
see \autoref{dfn:retract}.
This generalizes the construction of predecessors in the context of simple \ca{s} from \cite{Eng14PhD}, as well as the constructions from \cite[Section~5.4]{AntPerThi18TensorProdCu} and \cite[Proposition~2.9]{Thi20RksOps}.

\begin{rmk}
Let $S$ be a weakly cancellative \CuSgp{} satisfying \axiomO{5}-\axiomO{7} (for instance, the Cuntz semigroup of a stable rank one \ca{}). 
Take $x\in S$, and consider the set
\[
L_x':= \big\{ u' : u'\ll u \leq \infty s, \text{and } u+s\ll x \text{ for some } u,s\in S \big\}.
\]
 
A slight modification of \autoref{prp:LxDirected} shows that $L_x'$ is upward directed.

If $S$ is countably based and $(2,\omega )$-divisible, it is readily checked that $\sigma (x) = \sup L_x = \sup L_x'$. 
However, if $S$ is not $(2,\omega )$-divisible, $\sup L_x'$ may not be strongly soft. For example, the Cuntz semigroup of $\mathbb{C}$ is $\NNbar = \NN\cup\{\infty \}$, which is weakly cancellative. 
One can check that 
\[
\sup L_x' = \begin{cases}
0, & \text{ if } x=0 \\
x-1, & \text{ if } x\neq 0,\infty \\
\infty, & \text{ if } x=\infty \\
\end{cases}.
\]
In particular, if $x\neq 0, \infty$, we get $\sup L_x'=x-1$, which is not strongly soft.

As another example, there are \CuSgp{s} whose order structure is deeply related to its soft elements but where $\sup L_x'$ is rarely strongly soft: 
Let $S$ be a \CuSgp{} of the form $\Lsc(X,\NNbar)$ for some $T_1$-space $X$ (these were called \emph{Lsc-like} in \cite{Vil21arX:CommCuAI}).
An element $f \in \Lsc(X,\NNbar)$ is strongly soft if and only if $f = \infty \chi_U$ for the indicator function $\chi_U$ of some open subset $U \subseteq X$.
Thus, if $x\in S$ satisfies $x\ll \infty$, we have $\sup L_x'\ll \infty$, which implies that $\sup L_x'$ is not strongly soft, unless it is zero.
\end{rmk}

\begin{prp}
\label{prp:sigmaSoft}
Let $S$ be a countably based, left-soft separative, $(2, \omega)$-divisible \CuSgp{} satisfying \axiomO{5}-\axiomO{7}, and let $x\in S$. 
Then:
\begin{enumerate}
\item
The element $\sigma (x)$ is the largest strongly soft element dominated by $x$.
\item
We have $\infty x=\infty\sigma (x)$.
\item
We have $x=\sigma (x)$ if and only if $x$ is strongly soft.
\item
We have $x\leq \sigma (x)+t$ for all $t \in S$ with $x\leq \infty t$.
\end{enumerate}
\end{prp}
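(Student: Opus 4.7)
The plan is to establish (1) first—characterizing $\sigma(x)$ as the largest strongly soft element dominated by $x$—and then to deduce (2)-(4) as corollaries, using \autoref{prp:SigmaDomination} and \cite[Proposition~7.7]{ThiVil23arX:Soft}.

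For (1), the inequality $\sigma(x) \leq x$ is immediate, since every $u' \in L_x$ satisfies $u' \ll u \ll x$. To see $\sigma(x) \in S_\soft$, I would fix a countable basis $B \subseteq S$ and observe that $L_x$ is downward hereditary: if $y \leq y'$ and $y' \ll u \ll x$ with $u \in S_\soft$, then $y \ll u$, so $y \in L_x$. Consequently every element of $L_x$ is a supremum of elements of $B \cap L_x$, whence $\sigma(x) = \sup(B \cap L_x)$. Enumerating $B \cap L_x = \{v_k'\}_k$, I would inductively construct a $\ll$-increasing sequence $(w_n')_n$ of strongly soft elements of $L_x$ with $v_k' \ll w_n'$ whenever $k \leq n$: at step $n+1$, apply the enhanced conclusion of \autoref{prp:PreLxDirected} (which requires left-soft separativity) to $u' = w_n'$ and $v' = v_{n+1}'$. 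Then $w := \sup_n w_n'$ is strongly soft by \cite[Theorem~4.14]{ThiVil23arX:Soft}, dominates every $v_k'$, and satisfies $w \leq \sigma(x)$ since each $w_n' \in L_x$; hence $\sigma(x) = w \in S_\soft$. For maximality, given $z \in S_\soft$ with $z \leq x$, write $z$ as the supremum of a $\ll$-increasing sequence $(z_n')_n$ from $S_\soft$ (available because $S_\soft$ is a sub-\CuSgp{} by \autoref{prp:SoftPartAxioms}); then $z_n' \ll z_{n+1}' \leq x$ places $z_n'$ in $L_x$, so $z_n' \leq \sigma(x)$, and consequently $z \leq \sigma(x)$.

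Statements (2)-(4) then follow directly from (1). For (3): if $x \in S_\soft$ then (1) gives $\sigma(x) = x$, while conversely $\sigma(x) \in S_\soft$ always. For (2), the inequality $\infty\sigma(x) \leq \infty x$ is clear; for the reverse, given $x' \ll x$ I would pick $x''$ with $x' \ll x'' \ll x$ and apply \cite[Proposition~7.7]{ThiVil23arX:Soft} to $x''$ to obtain $y \in S_\soft$ with $2y \leq x''$ and $x'' \leq \infty y$. Then $y \leq x$ forces $y \leq \sigma(x)$ by (1), so $x' \leq \infty y \leq \infty \sigma(x)$; passing to the supremum yields $x \leq \infty \sigma(x)$. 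For (4), fix $t \in S$ with $x \leq \infty t$ and take $x' \ll x'' \ll x$; then $x'' \ll x \leq \infty t$, and \autoref{prp:SigmaDomination} produces a strongly soft $u' \in L_x$ with $x' \ll u' + t$. Since $u' \leq \sigma(x)$, one has $x' \leq \sigma(x) + t$, and taking the supremum over $x' \ll x$ gives $x \leq \sigma(x) + t$.

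The main obstacle is part (1), specifically the construction of the cofinal $\ll$-increasing sequence of strongly soft elements inside $L_x$. This is where left-soft separativity enters essentially: without it, \autoref{prp:PreLxDirected} delivers only the rank inequality $\frac{k}{k+1}\widehat{v_k'} \leq \widehat{w_n'}$, which would not suffice to conclude that $\sigma(x)$ actually dominates each $v_k'$ in the order. Once this strengthened directedness of $L_x$ is in place, the remaining statements amount to bookkeeping built on \autoref{prp:SigmaDomination} and \cite[Proposition~7.7]{ThiVil23arX:Soft}.
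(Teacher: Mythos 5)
Your proposal is correct, and its overall skeleton matches the paper's: establish (1) first, then derive (3) immediately, (2) from the existence of a full soft element below $x$, and (4) from \autoref{prp:SigmaDomination}. Where you genuinely diverge is in how strong softness of $\sigma(x)$ is obtained in (1). The paper verifies the definition directly: given $s \ll \sigma(x)$, it interpolates a strongly soft $u'' \in L_x$ with $s \ll u'' \leq \sigma(x)$ and uses strong softness of $u''$ to produce the required $t$ with $s+t \ll u'' \leq \sigma(x)$ and $s \ll \infty t$; this is short and uses left-soft separativity only through the upward directedness of $L_x$ that makes $\sup L_x$ well defined. You instead use the enhanced (left-soft separative) conclusion of \autoref{prp:PreLxDirected} to build a $\ll$-increasing sequence of strongly soft elements cofinal in $B \cap L_x$ and identify $\sigma(x)$ with its supremum via closure of $S_\soft$ under sequential suprema; this is longer and leans harder on countable basedness, but it buys something the paper's argument does not: your construction independently re-proves that $\sup L_x$ exists (your $w$ is an upper bound for $L_x$ by downward heredity, and any upper bound dominates each $w_n'$), so you do not need to presuppose the directedness discussion preceding \autoref{dfn:sigma}. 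For (2) you also take a more elementary route, applying \cite[Proposition~7.7]{ThiVil23arX:Soft} directly to a cut-down $x''$ to get a full soft element below $x$, where the paper invokes the heavier \autoref{prp:FuncSupSoft}. One small imprecision in your maximality step: writing $z_n' \ll z_{n+1}' \leq x$ does not by itself place $z_n'$ in $L_x$, since the definition requires a soft witness $u$ with $z_n' \ll u \ll x$; you should note that $z_{n+1}' \ll z_{n+2}' \leq x$ gives $z_{n+1}' \ll x$, so $u = z_{n+1}'$ works --- a one-line fix, and the paper elides the same point identically.
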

\begin{proof}
To verify~(1), note that the members of $L_x$ are bounded by $x$, and consequently $\sigma(x) \leq x$.
To see that $\sigma (x)$ is strongly soft, let $s\in S$ be such that $s\ll \sigma (x)$.
We will find $t \in S$ such that $s + t \ll \sigma(x)$ and $s \ll \infty t$.

Since $\sigma(x)=\sup L_x$, there exists $u' \in L_x$ such that $s \ll u' \leq \sigma (x)$. 
Using that $u'\in L_x$, we find $u\in S_\soft$ with $u'\ll u\ll x$.
By \autoref{prp:SoftPartAxioms}, $S_\soft$ is a sub-\CuSgp, and we obtain $u'' \in S_\soft$ such that
\[
s \ll u' \ll u'' \ll  u \ll x.
\]
Then $s \ll u'' \in S_\soft$ and by definition we obtain $t \in S$ such that $s+t\ll u''$ and $s \ll \infty t$.
We have $u'' \in L_x$ and therefore $u'' \leq \sigma(x)$, which shows that $t$ has the desired properties.

Thus, $\sigma(x)$ is a strongly soft element dominated by $x$.
To show that it is the largest element with these properties, let $w \in S_\soft$ satisfy $w\leq x$.
We can use once again that $S_\soft$ is a sub-\CuSgp{} to find a $\ll$-increasing sequence $(w_n)_n$ of strongly soft elements with supremum~$w$.
Then $w_n \in L_x$ for each $n$, and consequently
\[
w 
= \sup_n w_n 
\leq \sup L_x
= \sigma (x). 
\]

This also shows that $x=\sigma (x)$ if and only if $x$ is strongly soft. 
We have proved~(1) and~(3).

To verify~(2), we first note that $\infty \sigma(x) \leq \infty x$ since $\sigma(x) \leq x$.
For the converse inequality, use \autoref{prp:FuncSupSoft} to obtain $w\in S_\soft$ with $w\leq x\leq \infty w$. 
By~(1), we have $w\leq \sigma (x)$, and we get
\[
\infty x= \infty w\leq \infty \sigma(x).
\]

Finally, to prove~(4), let $t \in S$ satisfy $x \leq \infty t$.
Let $x' \in S$ satisfy $x' \ll x$.
Applying \autoref{prp:SigmaDomination}, we obtain $u' \in L_x$ such that $x' \ll u'+t$.
Then
\[
x' 
\ll u'+t
\leq \sigma(x)+t.
\]
Passing to the supremum over all $x'$ way-below $x$, we get $x \leq \sigma(x)+t$, as desired.
\end{proof}

\begin{exa}
Let $A$ be a separable, $\mathcal{W}$-stable \ca{}, that is, $A \cong A \otimes \mathcal{W}$ where $\mathcal{W}$ denotes the Jacelon-Razak algebra.
Then, every element in $\Cu (A)$ is strongly soft.
Thus \autoref{prp:sigmaSoft} implies that $\sigma (x)=x$ for every $x\in \Cu (A)$.
We refer to \cite[Section~7.5]{AntPerThi18TensorProdCu} for details.
 
Similarly, given a separable $\mathcal{Z}$-stable \ca{} $A$, where $\mathcal{Z}$ denotes the Jiang-Su algebra, then it follows from \cite[Theorem~7.3.11]{AntPerThi18TensorProdCu} that $\Cu (A)$ has $Z$-multiplication.
Here, $Z=(0,\infty]\sqcup \NN$ is the Cuntz semigroup of $\mathcal{Z}$, and $(0,\infty]$ is the subsemigroup of nonzero, strongly soft elements.
Let $1' \in Z$ be the strongly soft element corresponding to $1\in [0,\infty ]$. 
As noted in \cite[Proposition~7.3.16]{AntPerThi18TensorProdCu}, one has 
\[
1'\Cu (A) 
= \Cu (A)_\soft
\cong \Cu (A)\otimes [0,\infty ].
\] 
This implies that $\sigma (x) = 1'x$ for each $x\in\Cu (A)$.
\end{exa}

\begin{lma}
\label{prp:2Soft}
Let $S$ be a countably based, left-soft separative, $(2,\omega )$-divisible \CuSgp{} satisfying \axiomO{5}-\axiomO{7}, and let $x\in S$. 
Then
\[
2\sigma (x) = x+\sigma (x).
\]
\end{lma}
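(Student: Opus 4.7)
The plan is to establish both inequalities separately. The direction $2\sigma(x) \leq x + \sigma(x)$ is immediate, since part~(1) of \autoref{prp:sigmaSoft} gives $\sigma(x) \leq x$, and hence $\sigma(x) + \sigma(x) \leq x + \sigma(x)$.

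The nontrivial direction is $x + \sigma(x) \leq 2\sigma(x)$, and the main obstacle is that $x$ itself need not be strongly soft, so one cannot directly apply the cancellation results of \autoref{sec:SepCu} to compare $x$ with $\sigma(x)$. I would instead argue by approximation: it suffices to show that $x + \sigma(x)' \leq 2\sigma(x)$ for every $\sigma(x)' \in S$ with $\sigma(x)' \ll \sigma(x)$, since then taking the supremum of $\sigma(x)'$ along a $\ll$-increasing sequence with supremum $\sigma(x)$ (which exists by \axiomO{2}) and using that addition preserves suprema of increasing sequences (by \axiomO{4}) yields $x + \sigma(x) \leq 2\sigma(x)$.

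To produce the approximate inequality, the key tool is \cite[Proposition~4.13]{ThiVil23arX:Soft}: applied to the strongly soft element $\sigma(x)$ (which is soft by \autoref{prp:sigmaSoft}(1)) and to any $\sigma(x)' \ll \sigma(x)$, it yields $s \in S$ with
\[
\sigma(x)' + s \leq \sigma(x) \leq \infty s.
\]
From $\sigma(x) \leq \infty s$ and part~(2) of \autoref{prp:sigmaSoft}, namely $\infty \sigma(x) = \infty x$, we deduce $x \leq \infty \sigma(x) \leq \infty s$. Part~(4) of \autoref{prp:sigmaSoft} then gives $x \leq \sigma(x) + s$. Adding $\sigma(x)'$ to both sides and using $\sigma(x)' + s \leq \sigma(x)$, we conclude
\[
x + \sigma(x)' \leq \sigma(x) + s + \sigma(x)' \leq 2\sigma(x),
\]
which finishes the proof after passing to the supremum.
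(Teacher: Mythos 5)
Your proposal is correct and follows essentially the same route as the paper's own proof: both directions are handled identically, with the nontrivial inequality obtained by applying \cite[Proposition~4.13]{ThiVil23arX:Soft} to an element way-below the strongly soft element $\sigma(x)$, combining parts (2) and (4) of \autoref{prp:sigmaSoft} to get $x \leq \sigma(x)+s$, and passing to the supremum over all elements way-below $\sigma(x)$. The only differences are notational (your $\sigma(x)'$ and $s$ are the paper's $w$ and $t$).
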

\begin{proof}
Using that $\sigma (x)\leq x$, we have $2\sigma (x)\leq x+\sigma (x)$.
To show the reverse inequality, let $w \in S$ satisfy $w\ll \sigma (x)$. 
Since $\sigma (x)$ is strongly soft, it follows from \cite[Proposition~4.13]{ThiVil23arX:Soft} that there exists $t\in S$ with $w+t\leq \sigma (x)\leq \infty t$. 

We have $x \leq \infty \sigma (x)$ by \autoref{prp:sigmaSoft}~(2), and thus $x \leq \infty t$.
Therefore, $x\leq \sigma (x)+t$ by \autoref{prp:sigmaSoft}~(4).
Thus, we have 
\[
x+w\leq \sigma (x) + t +w \leq 2\sigma (x).
\]
Passing to the supremum over all $w$ way-below $\sigma (x)$, we get $x+\sigma (x)\leq 2\sigma (x)$.
\end{proof}

\begin{thm}
\label{prp:SsoftGenCuMor}
Let $S$ be a countably based, left-soft separative, $(2,\omega)$-divisible \CuSgp{} satisfying \axiomO{5}-\axiomO{7}. 
Then, the map $\sigma\colon S\to S_\soft$ preserves order, suprema of increasing sequences, and is superadditive. Further, we have 
\[
2\sigma (x+y) 
= \sigma (x+y) + \big( \sigma (x) + \sigma (y) \big) 
= 2\big( \sigma (x) + \sigma (y) \big)
\]
for every $x,y\in S$.
\end{thm}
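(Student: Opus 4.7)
The first three assertions are direct.
For order preservation, if $x\le y$ then every witness pair $u'\ll u\ll x$ with $u\in S_\soft$ also satisfies $u\ll y$, so $L_x\subseteq L_y$ and $\sigma(x)\le\sigma(y)$.
For preservation of suprema, given an increasing sequence $(x_n)$ with supremum $x$, the inequality $\sup_n\sigma(x_n)\le\sigma(x)$ is immediate from monotonicity; conversely, any $u'\in L_x$ has $u\ll x=\sup_n x_n$, so $u\ll x_n$ for some $n$, putting $u'\in L_{x_n}$ and hence $u'\le\sigma(x_n)\le\sup_n\sigma(x_n)$. For superadditivity, $\sigma(x)+\sigma(y)$ is strongly soft by \autoref{prp:SoftPartAxioms} and lies below $x+y$, so by the maximality clause in \autoref{prp:sigmaSoft}(1) it is dominated by $\sigma(x+y)$.

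For the doubled identity, the plan is to rewrite both ends using \autoref{prp:2Soft}:
\[
2\sigma(x+y) = (x+y)+\sigma(x+y),\qquad 2(\sigma(x)+\sigma(y)) = (x+y)+\sigma(x)+\sigma(y).
\]
The middle equality $\sigma(x+y)+\sigma(x)+\sigma(y) = 2(\sigma(x)+\sigma(y))$ then falls out: the direction $\le$ follows by adding $\sigma(x)+\sigma(y)$ to $\sigma(x+y)\le x+y$ and invoking the rewriting of $2(\sigma(x)+\sigma(y))$, while $\ge$ is superadditivity plus the same addition.

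What remains is the outer equality $2\sigma(x+y) = \sigma(x+y)+\sigma(x)+\sigma(y)$, whose nontrivial direction amounts to $2\sigma(x+y)\le 2(\sigma(x)+\sigma(y))$. Since $x\le 2\sigma(x)$ and $y\le 2\sigma(y)$ by \autoref{prp:2Soft}, we have $x+y\le 2(\sigma(x)+\sigma(y))$, giving
\[
2\sigma(x+y) = (x+y)+\sigma(x+y) \;\le\; 2(\sigma(x)+\sigma(y))+\sigma(x+y).
\]
The main obstacle is to cancel the extra $\sigma(x+y)$ on the right, and this is precisely where left-soft separativity is used. My plan is to approximate $2\sigma(x+y)$ from below by a $\ll$-increasing sequence of strongly soft elements (available since $S_\soft$ is a sub-\CuSgp{} by \autoref{prp:SoftPartAxioms}) and, for each approximant $s\in S_\soft$ with $s\ll 2\sigma(x+y)$, exhibit test elements $t\in S_\soft$ and $t'\in S$ with $t'\ll t$ and $t'$ dominated by the relevant infinite multiples, so that $s+t\le 2(\sigma(x)+\sigma(y))+t'$. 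An application of \autoref{prp:CancelSoft} (equivalently \autoref{rmk:LSSepEqu}) then yields $s\le 2(\sigma(x)+\sigma(y))$, and passing to the supremum closes the loop.
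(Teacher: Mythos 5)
Your first three assertions (order, suprema, superadditivity) and your algebraic reductions are exactly right and match the paper: both proofs get superadditivity from \autoref{prp:sigmaSoft}(1), and your rewriting of both ends of the doubled identity via \autoref{prp:2Soft}, together with your derivation of the middle equality from superadditivity, is a clean and correct way to reduce everything to the single inequality $2\sigma(x+y)\leq 2(\sigma(x)+\sigma(y))$.

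At that point, however, your plan has a genuine gap: the entire difficulty is concentrated in the sentence ``exhibit test elements $t\in S_\soft$ and $t'\in S$ with $t'\ll t$ \ldots so that $s+t\leq 2(\sigma(x)+\sigma(y))+t'$,'' and you give no construction. This is not routine. Starting from $2\sigma(x+y)\leq 2(\sigma(x)+\sigma(y))+\sigma(x+y)$, the summand to be cancelled appears in full on the right, and any attempt to replace it by a way-below approximant $t'$ while keeping a $\ll$-larger $t$ added on the \emph{left} fails for the naive choices: e.g.\ taking $s\ll 2\sigma(x+y)$ and $t$ from \cite[Proposition~4.13]{ThiVil23arX:Soft} yields only inequalities like $s+t\leq 2(\sigma(x)+\sigma(y))+w$ with $w\ll\infty t$, not $w\ll t$, which is what \autoref{prp:CancelSoft} requires. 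The configuration can in fact be produced, but only by importing the two ideas that constitute the paper's actual proof: given $w\ll\sigma(x+y)$ soft, take $t$ with $w+t\leq\sigma(x+y)\leq\infty t$, divide it as $2t_2\leq t\leq\infty t_2$ by \cite[Proposition~7.7]{ThiVil23arX:Soft}, bound $w\leq x'+y'\leq\sigma(x)+\sigma(y)+2t''$ with $2t''\ll t$ using \autoref{prp:sigmaSoft}(4) on way-below approximants $x'\ll x$, $y'\ll y$, and then \emph{absorb}:
\[
2w+t
\leq \big(\sigma(x)+\sigma(y)+2t''\big)+\sigma(x+y)
\leq \sigma(x)+\sigma(y)+x+y+2t''
= 2\big(\sigma(x)+\sigma(y)\big)+2t'',
\]
where the last step is \autoref{prp:2Soft}; now \autoref{prp:CancelSoft} gives $2w\leq 2(\sigma(x)+\sigma(y))$. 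Note, though, that once the absorption step $\sigma(x+y)\leq x+y$ combined with $\sigma(x)+x=2\sigma(x)$ is in hand, the cancellation becomes superfluous: the paper proves the inequality by the direct chain
\[
\sigma(x+y)+w
\leq x+y+w
\leq \sigma(x)+\sigma(y)+w+2t
\leq \sigma(x)+\sigma(y)+\sigma(x+y)
\leq 2\sigma(x)+2\sigma(y),
\]
for all $w\ll\sigma(x+y)$ (with $w+2t\leq\sigma(x+y)$ and $x,y\leq\infty t$), then takes suprema --- no separativity is invoked at this stage at all; it is needed only upstream, so that $\sigma$, \autoref{prp:sigmaSoft} and \autoref{prp:2Soft} are available.
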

\begin{proof}
To show that $\sigma$ is order-preserving, let $x,y\in S$ satisfy $x\leq y$. 
Then $L_x \subseteq L_y$, and thus
\[
\sigma(x) 
= \sup L_x 
\leq \sup L_y 
= \sigma(y).
\]

To show that $\sigma$ preserves suprema of increasing sequences, let $(x_n)_n$ be an increasing sequence in $S$ with supremum $x$.
Since $\sigma$ is order-preserving, one gets $\sup_n\sigma(x_n) \leq \sigma(x)$. 
Conversely, given $u' \in L_x$, choose $u \in S_\soft$ with $u' \ll u \ll x$.
Then there exists $n\in \NN$ such that $u \ll x_n$, and thus $u' \in L_{x_n}$.
We deduce that
\[
u' 
\leq \sup L_{x_n} 
= \sigma(x_n)
\leq \sup_n \sigma(x_n).
\]
Hence, $\sigma(x) = \sup L_x \leq \sup_n \sigma(x_n)$, as desired.

To see that $\sigma$ is superadditive, let $x,y\in S$. 
Note that $\sigma (x)+\sigma (y)$ is a strongly soft element bounded by $x+y$. 
Using \autoref{prp:sigmaSoft}~(1), we get $\sigma (x)+\sigma (y)\leq \sigma (x+y)$.

\medskip

Next, given $x,y \in S$, let us show that $2\sigma (x+y) \leq 2\sigma (x) + 2\sigma (y)$.
To prove this, let $w \in S$ satisfy $w \ll \sigma(x+y)$.
By \cite[Proposition~4.13]{ThiVil23arX:Soft}, there exists $s\in S$ satisfying 
\[
w + s \leq \sigma(x+y) \leq \infty s.
\]
Applying \cite[Proposition~7.7]{ThiVil23arX:Soft}, we find $t \in S$ such that $2t \leq s\leq \infty t$. 
Using also \autoref{prp:sigmaSoft}~(2), we deduce that
\[
w+2t
\leq w+s 
\leq \sigma(x+y), \andSep
x,y 
\leq \infty (x+y)
= \infty \sigma (x+y)
\leq \infty s 
\leq \infty t.
\]
Using \autoref{prp:sigmaSoft}~(4) at the second step, and \autoref{prp:2Soft} at last step, we get
\[
\begin{split}
\sigma (x+y) + w &\leq x+y+w\leq \sigma (x)+\sigma (y)+w+2t\leq 
\sigma (x)+\sigma (y) + \sigma (x+y)\\
&\leq \sigma (x)+\sigma (y) + x + y
= 2\sigma (x) + 2\sigma (y).
\end{split}
\]

Passing to the supremum over all elements $w$ way-below $\sigma(x+y)$, we obtain 
\[
2\sigma (x+y) \leq 2\sigma (x) + 2\sigma (y).
\]

Next, given $x,y \in S$, using the above inequality together with the established superadditivity of $\sigma$, we get 
\[
2\sigma (x+y) 
\leq 2\sigma (x) + 2\sigma (y)
\leq \sigma (x+y) + \big( \sigma (x) + \sigma (y) \big)
\leq 2\sigma (x+y),
\]
as desired.
\end{proof}

Recall that a \emph{generalized \CuMor} is a monoid morphism between \CuSgp{s} that preserves order and suprema of increasing sequences.
We recall the definition of \emph{retract} from  \cite[Definition~3.14]{ThiVil22DimCu}.

\begin{dfn}
\label{dfn:retract}
Let $S, T$ be \CuSgp{s}. 
We say that $S$ is a \emph{retract} of $T$ if there exist a \CuMor{} $\iota\colon S\to T$ and a generalized \CuMor{} $\sigma \colon T\to S$ such that $\sigma\circ \iota = \id_S$.
\end{dfn}

\begin{prp}
\label{prp:SigmaCuMor}
Let $S$ be a countably based, left-soft separative, $(2,\omega)$-divisible \CuSgp{} satisfying \axiomO{5}-\axiomO{7}. 
Additionally, assume one of the following:
\begin{itemize}
\item[(i)] 
$S$ is almost unperforated;
\item[(ii)] 
$S$ is inf-semilattice ordered;
\item[(iii)] 
$S\otimes \{ 0,\infty \}$ is algebraic.
\end{itemize}
Then, $\sigma $ is a generalized \CuMor{} and $S_\soft$ is a retract of $S$.
\end{prp}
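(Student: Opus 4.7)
The plan is to reduce the entire proposition to establishing the additivity of $\sigma$: once $\sigma\colon S \to S_\soft$ is shown to be additive, it will be a generalized \CuMor{}, since \autoref{prp:SsoftGenCuMor} already furnishes order-preservation, preservation of suprema of increasing sequences, and superadditivity. For the retract statement, the inclusion $\iota\colon S_\soft \to S$ is a \CuMor{} (because $S_\soft$ is a sub-\CuSgp{} by \autoref{prp:SoftPartAxioms}), and $\sigma \circ \iota = \id_{S_\soft}$ by \autoref{prp:sigmaSoft}(3). Hence the bulk of the work is the additivity.

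The key input is the identity $2\sigma(x+y) = 2(\sigma(x)+\sigma(y))$ from \autoref{prp:SsoftGenCuMor}. Setting $a := \sigma(x+y)$ and $b := \sigma(x)+\sigma(y)$, both elements lie in $S_\soft$ by \autoref{prp:SoftPartAxioms}, satisfy $b \leq a$ by superadditivity, and $2a = 2b$. The task is to upgrade this to $a \leq b$, and each of the three hypotheses furnishes a different route.

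Under (i), the inclusion $S_\soft \hookrightarrow S$ is an order-embedding \CuMor{}, so $S_\soft$ inherits almost unperforation from $S$. By \autoref{prp:PerfCond} this upgrades to unperforation of $S_\soft$, whence $2a \leq 2b$ forces $a \leq b$. Under (ii), the strategy is to combine the inf-semilattice identity $(a+b)\wedge(b+b) = (a \wedge b) + b$ with $\ll$-approximations $a = \sup a_n$ and $b = \sup b_n$ in $S_\soft$ (available by \autoref{prp:SoftPartAxioms}) to refine $a + b \leq b+b$ into an inequality of the form $a' + t \leq b + t'$ with $t' \ll t$ in $S_\soft$ and $t' \ll \infty b$, to which \autoref{prp:CancelSoft} applies; this yields $a' \leq b$ for every $a' \ll a$ in $S_\soft$, and passing to the supremum gives $a \leq b$. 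Under (iii), $\infty a = \infty b$ by \autoref{prp:sigmaSoft}(2), and algebraicity of $S \otimes \{0,\infty\}$ produces compact idempotent approximants of this common value; these serve as the strictly-way-below witnesses required by \autoref{prp:CancelSoft}, again yielding $a \leq b$.

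The most delicate steps will be cases (ii) and (iii); case (i) is essentially immediate from \autoref{prp:PerfCond}. The unifying theme is that each structural hypothesis supplies the missing $\ll$-refinement needed to invoke \autoref{prp:CancelSoft} and cancel the spurious factor of $2$ from the identity $2a = 2b$. Once additivity is proven, $\sigma$ is a generalized \CuMor{} and the retract assertion follows immediately from the observations in the first paragraph.
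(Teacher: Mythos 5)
Your reduction to additivity, the retract argument, and case (i) are all fine and agree with the paper: there too, almost unperforation passes to $S_\soft$, is upgraded to unperforation by \autoref{prp:PerfCond}, and cancels the $2$ in $2\sigma(x+y)=2(\sigma(x)+\sigma(y))$. The gap is in cases (ii) and (iii). With $a:=\sigma(x+y)$ and $b:=\sigma(x)+\sigma(y)$, you want to refine $a+b\leq b+b$ into an inequality $a'+t\leq b+t'$ with $t'\ll t$, so as to apply \autoref{prp:CancelSoft}. No such refinement is available: every cancellation statement at your disposal (\autoref{dfn:LefSofSep}, \autoref{rmk:LSSepEqu}, \autoref{prp:CancelSoft}) requires the copy being cancelled on the left to dominate the copy on the right in the sense $t'\ll t$, but here both copies are the same, generally non-compact, element $b$. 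Concretely, writing $b=\sup_n b_n$ with $b_n\ll b$ and picking $a'\ll a$ and $b''\ll b$ in $S_\soft$, one gets $a'+b''\ll a+b=\sup_n(b+b_n)$, hence $a'+b''\leq b+b_n$ for some $n$ depending on $b''$; but nothing forces $b_n\ll b''$ --- typically $b''\leq b_n$ --- so the hypotheses of \autoref{prp:CancelSoft} point in exactly the wrong direction, and compact idempotent approximants in case (iii) do not change this, since the obstruction is cancelling $b$ against itself. The inf-semilattice identity you quote is moreover vacuous, since $a\wedge b=b$. A sanity check confirms something must be wrong: as sketched, your (ii) and (iii) would really only use left-soft separativity together with softness, and would thus answer affirmatively the question posed after \autoref{prp:SigmaCuMorCStar} (subadditivity of $\sigma$ for weakly cancellative $S$ --- a hypothesis \emph{stronger} than left-soft separativity), which the paper explicitly leaves open.

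The paper's proof of (ii) and (iii) does not attempt to cancel in $2a=2b$; it instead proves the stronger inequality $\sigma(x+y)\leq x+\sigma(y)$, which yields subadditivity formally: $\sigma(x+y)=\sigma(\sigma(x+y))\leq\sigma(x+\sigma(y))\leq\sigma(x)+\sigma(y)$. For $w\ll\sigma(x+y)$ it produces an auxiliary soft $r$ with $w+r\leq\sigma(x+y)\leq\infty r$, soft elements $t'\ll t\ll r,y$ with $y'\leq\infty t'$ (via \cite[Proposition~4.7]{ThiVil23Glimm}), and a decomposition $t'+b\leq y\leq t+b$ from \axiomO{5}; left-soft separativity is then applied to cancel the \emph{auxiliary} elements $t$, resp.\ $r$, for which the needed way-below relations do hold, giving $w\leq x+b$. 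Hypothesis (ii) enters only at the end, through the additivity of the infimum with the idempotent $\infty t'$ (which exists since $S$ is countably based and satisfies \axiomO{7}): one gets $w\leq(x+b)\wedge(x+\infty t')=x+(b\wedge\infty t')\leq x+\sigma(y)$, because $(b\wedge\infty t')+t'$ is soft and below $y$. Hypothesis (iii) enters by first treating $y$ with $y\ll\infty y$, where one obtains $r'\ll r$ with $y\leq\infty r'$, invokes $y\leq\sigma(y)+r'$ from \autoref{prp:sigmaSoft}(4), and cancels $r$ via \autoref{rmk:LSSepEqu}; the general case then follows from \cite[Lemma~4.16]{ThiVil23Glimm} and the fact that $\sigma$ preserves suprema of increasing sequences. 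To salvage your outline, you would have to replace the $2a=2b$ cancellation in (ii)/(iii) by an argument of this type.
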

\begin{proof}
By \autoref{prp:SsoftGenCuMor}, we only need to check that $\sigma$ is subadditive.
 
(i):
If $S$ is almost unperforated, then it follows from \autoref{prp:PerfCond} that $S_\soft$ is unperforated. Given any pair $x,y\in S$, we know from \autoref{prp:SsoftGenCuMor} that 
\[
2\sigma (x+y) = 2\big(\sigma (x) + \sigma (y)\big).
\]
Since this equality is in $S_\soft$, it follows that $\sigma (x+y)=\sigma (x) + \sigma (y)$.
 
For (ii) and (iii), note that it is enough to prove that $\sigma (x+y)\leq x+\sigma (y)$ for all $x,y \in S$.
Indeed, if this inequality holds, one can use it at the second and last steps to get 
\[
\sigma (x+y) = \sigma (\sigma (x+y))\leq \sigma (x+\sigma (y)) = 
\sigma (\sigma (y) +x)\leq \sigma (y)+\sigma (x),
\]
as required.

Given $x,y \in S$, we proceed to verify that $\sigma (x+y)\leq x+\sigma (y)$.
Let $w\in S$ satisfy $w \ll \sigma(x+y)$.
Choose $y'\in S$ such that
\[
y'\ll y, \andSep 
w \ll x+y'. 
\]

Since $\sigma(x+y)$ is strongly soft, it follows from \cite[Proposition~4.13]{ThiVil23arX:Soft} that there exists $r\in S_\soft$ such that 
\[
w+r \leq \sigma (x+y) \leq \infty r.
\]
Applying \autoref{prp:sigmaSoft}~(2), one gets
\[
y' \ll y \leq \infty \sigma (x+y)\leq \infty r.
\]

Applying \cite[Proposition~4.7]{ThiVil23Glimm}, we obtain $t',t\in S$ such that
\[
y'\leq\infty t', \andSep
t'\ll t\ll r,y.
\]

Using that $S$ is $(2,\omega )$-divisible, it follows from \cite[Proposition~5.6]{ThiVil23arX:Soft} that we may assume both $t'$ and $t$ to be strongly soft. 
Thus, as in the proof of \autoref{prp:O5Appl}, we can apply \axiomO{5} to obtain an element $b$ satisfying
\[
t'+b\leq y\leq t+b,\andSep 
y\leq \infty b,
\]
which implies
\[
w+r\leq\sigma(x+y)\leq x+y \leq x+t+b
\]
with $t\ll r \leq \infty (x+y)=\infty (x+b)$.

Thus, since both $w$ and $r$ are strongly soft, left-soft separativity (in the form of \autoref{prp:CancelSoft}) implies that $w\leq x+b$.
Since $S$ is countably based and satisfies \axiomO{7}, the infimum $(b\wedge\infty t')$ exists.
Note that $(b\wedge\infty t') + t'$ is soft because $(b\wedge\infty t') \leq \infty t'$; see \cite[Theorem~4.14]{ThiVil23arX:Soft}. 
Then
\[
(b\wedge\infty t') + t'
\leq b+t'
\leq y,
\]
and thus $b\wedge\infty t' \leq (b\wedge\infty t') + t' \leq \sigma(y)$ by \autoref{prp:sigmaSoft}~(1).

\medskip

(ii): Assuming that $S$ is inf-semilattice ordered, it now follows that
\[
w\leq (x+b)\wedge(x+\infty t')
= x+(b\wedge\infty t')
\leq x+\sigma(y).
\]
Passing to the supremum over all $w$ way-below $\sigma(x+y)$, we get $\sigma(x+y)\leq x+\sigma(y)$, as desired.
This proves the case~(ii).
 
\medskip

(iii):
Let us additionally assume that $y\ll \infty y$. 
Then, given $w$ and $r$ as before, we have that $y\ll \infty y\leq \infty r$. This implies that there exists $r'\in S$ such that $r'\ll r$ and $y\leq \infty r'$.
Using \autoref{prp:sigmaSoft} at the last step, one gets 
\[
w+r\leq \sigma (x+y)\leq x+y\leq x+\sigma (y)+r'
\]
with $r'\ll r \leq \infty (x+y)=\infty (x+\sigma (y))$.

Therefore, we can use \autoref{rmk:LSSepEqu} to deduce that $w\leq x+\sigma (y)$. 
Since this holds for every $w$ way-below $\sigma (x+y)$, it follows that $\sigma(x+y)\leq x+\sigma(y)$ whenever $y\ll \infty y$.

If $S\otimes \{ 0,\infty \}$ is algebraic, then by \cite[Lemma~4.16]{ThiVil23Glimm} every $y \in S$ is the supremum of an increasing sequence $(y_n)_n$ of elements $y_n \in S$ such that $y_n \ll \infty y_n$.
Using the above for each $y_n$, and using that $\sigma$ preserves suprema of increasing sequences, we get
\[
\sigma(x+y)
= \sup_n \sigma(x+y_n)
\leq \sup_n \big( x+\sigma(y_n) \big)
= x + \sigma(y),
\]
as desired.
\end{proof}

\begin{thm}
\label{prp:SigmaCuMorCStar}
Let $A$ be a separable \ca{} with the Global Glimm Property. 
Additionally, assume one of the following:
\begin{itemize}
\item[(i)] 
$A$ has strict comparison of positive elements;
\item[(ii)] 
$A$ has stable rank one;
\item[(iii)] 
$A$ has topological dimension zero, and $\Cu (A)$ is left-soft separative.
\end{itemize}
Then, $\Cu (A)_\soft$ is a retract of $\Cu (A)$.
\end{thm}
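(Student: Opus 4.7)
The plan is to derive this theorem as a direct corollary of \autoref{prp:SigmaCuMor} applied to $S = \Cu(A)$. To invoke that proposition, I need to verify four standing hypotheses on $\Cu(A)$ together with one of the three alternative conditions, and in each case match it to the corresponding hypothesis (i), (ii), or (iii) of \autoref{prp:SigmaCuMor}.

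First I would check the standing hypotheses. Since $A$ is separable, $\Cu(A)$ is countably based by \cite{AntPerSan11PullbacksCu}. Since $A$ has the Global Glimm Property, $\Cu(A)$ is $(2,\omega)$-divisible by \cite[Theorem~3.6]{ThiVil23Glimm}. The axioms \axiomO{5}-\axiomO{7} hold for the Cuntz semigroup of any \ca{}, as recorded in \autoref{pgr:CuSgps}. Left-soft separativity needs to be handled case by case: in~(i) it follows from \autoref{prp:StCompLSSep}, in~(ii) from \autoref{prp:WCimpLSSep} together with weak cancellation of $\Cu(A)$ for stable rank one \ca{s} (\autoref{pgr:StR1}), and in~(iii) it is part of the hypothesis.

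Then I would match each case to the corresponding branch of \autoref{prp:SigmaCuMor}. In case~(i), strict comparison is equivalent to almost unperforation of $\Cu(A)$ by \cite[Proposition~6.2]{EllRobSan11Cone}, giving hypothesis~(i) of \autoref{prp:SigmaCuMor}. In case~(ii), since $A$ is separable of stable rank one, $\Cu(A)$ is inf-semilattice ordered by \cite[Theorem~3.8]{AntPerRobThi22CuntzSR1}, giving hypothesis~(ii) of \autoref{prp:SigmaCuMor}. In case~(iii), I would invoke that topological dimension zero of $A$ forces $\Cu(A)\otimes\{0,\infty\}$ to be algebraic; this is the input needed for hypothesis~(iii) of \autoref{prp:SigmaCuMor}, and should follow from the relationship between the topological dimension of $\operatorname{Prim}(A)$ and the ideal lattice structure encoded by $\Cu(A)\otimes\{0,\infty\}$ (as used in \cite{ThiVil23Glimm}).

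Having verified the hypotheses, \autoref{prp:SigmaCuMor} provides a generalized \CuMor{} $\sigma\colon\Cu(A)\to\Cu(A)_\soft$. Combined with the canonical inclusion $\iota\colon\Cu(A)_\soft\to\Cu(A)$, which is a \CuMor{} because $\Cu(A)_\soft$ is a sub-\CuSgp{} (\autoref{prp:SoftPartAxioms}), and with the fact that $\sigma\circ\iota=\id$ by \autoref{prp:sigmaSoft}~(3), the pair $(\iota,\sigma)$ exhibits $\Cu(A)_\soft$ as a retract of $\Cu(A)$ in the sense of \autoref{dfn:retract}. The main obstacle I expect is case~(iii): the other two cases are essentially a bookkeeping exercise of identifying which previously established property ensures subadditivity of $\sigma$, whereas~(iii) requires the algebraicity of $\Cu(A)\otimes\{0,\infty\}$, which must be extracted from the topological assumption on $A$ rather than read off a cancellation or comparison property of the Cuntz semigroup.
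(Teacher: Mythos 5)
Your proposal is correct and follows essentially the same route as the paper: reduce to \autoref{prp:SigmaCuMor} by verifying countable basedness, $(2,\omega)$-divisibility, \axiomO{5}-\axiomO{7}, and left-soft separativity, then match each case to (i), (ii), (iii) via \cite[Proposition~6.2]{EllRobSan11Cone}, \cite[Theorem~3.8]{AntPerRobThi22CuntzSR1}, and the algebraicity of $\Cu(A)\otimes\{0,\infty\}$, respectively. The one reference you could not pin down in case~(iii) is exactly \cite[Proposition~4.18]{ThiVil23Glimm}, which states that topological dimension zero implies $\Cu(A)\otimes\{0,\infty\}$ is algebraic, so your anticipated ``main obstacle'' is in fact a direct citation.
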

\begin{proof}
The Cuntz semigroup $\Cu(A)$ is countably based and satisfies \axiomO{5}-\axiomO{7}.
Since $A$ has the Global Glimm Property, it follows from \cite[Theorem~3.6]{ThiVil23Glimm} that $\Cu (A)$ is $(2,\omega )$-divisible.
We check that the additional conditions of \autoref{prp:SigmaCuMor} are satisfied:

(i):
Assume that $A$ has has strict comparison of positive elements.
Then $\Cu(A)$ is almost unperforated by \cite[Proposition~6.2]{EllRobSan11Cone}, and left-soft separative by \autoref{prp:StCompLSSep}.
This verifies \autoref{prp:SigmaCuMor}~(i).

(ii) :
Assume that $A$ has stable rank one.
Then $\Cu(A)$ is inf-semilattice ordered by \cite[Theorem~3.8]{AntPerRobThi22CuntzSR1}, and left-soft separative by \autoref{prp:WCimpLSSep}.
This verifies \autoref{prp:SigmaCuMor}~(ii).
 
(iii):
Assume that $A$ has topological dimension zero, and $\Cu (A)$ is left-soft separative.
Then $\Cu (A)\otimes\{ 0,\infty \}$ is algebraic by \cite[Proposition~4.18]{ThiVil23Glimm}.
This verifies \autoref{prp:SigmaCuMor}~(iii).
\end{proof}

\begin{qst}
Let $S$ be a countably based, weakly cancellative, $(2,\omega)$-divisible \CuSgp{} satisfying \axiomO{5}-\axiomO{7}.
Is the map $\sigma\colon S\to S_\soft$ subadditive?
\end{qst}

With view towards the proof of subadditivity in \autoref{prp:SsoftGenCuMor}, we ask:

\begin{qst}
\label{qst:InfimumIdempotent}
Let $S$ be the Cuntz semigroup of a \ca{}.
Let $x,y,z,w\in S$ satisfy
\[
w=2w, \quad x\leq y+z, \andSep x\leq y+w.
\]

We know that $z\wedge w$ exists.
Does it follow that $x\leq y+(z\wedge w)$?
\end{qst}

\autoref{qst:InfimumIdempotent} above has a positive answer if $S$ satisfies the \emph{interval axiom}, as defined in \cite[Definition~9.3]{ThiVil21arX:NowhereScattered}.

\section{Dimension of a Cuntz semigroup and its soft part}
\label{sec:DimSoft}

Let $S$ be a countably based, left-soft separative, $(2,\omega)$-divisible \CuSgp{} satisfying \axiomO{5}-\axiomO{7}, and assume that $\sigma\colon S\to S_\soft$ is a generalized \CuMor{}. 
We show that the (covering) dimension of $S$ and $S_\soft$, as defined in \cite[Definition~3.1]{ThiVil22DimCu}, are closely related:
We have $\dim(S_\soft)\leq\dim(S)\leq\dim(S_\soft)+1$; 
see \autoref{prp:SsoftDim}.

Using the technique developed in \cite[Section~5]{ThiVil21DimCu2}, we remove the assumption that the \CuSgp{} is countably based;
see \autoref{prp:SsoftDimGen}. 
The result applies, in particular, to the Cuntz semigroup of every \ca{} with the Global Glimm Property that has either strict comparison of positive elements, stable rank one, or topological dimension zero;
see \autoref{prp:dimOpt}.

We also study the dimension of the fixed-point algebra $A^\alpha$ for a finite group action $\alpha$; 
see \autoref{prp:DimWTRP}.

\begin{pgr}[Dimension of \CuSgp{s}]
Recall from \cite[Definition~3.1]{ThiVil22DimCu} that, given a \CuSgp{} $S$ and $n\in\NN$, we say that $S$ has \emph{dimension} $n$, in symbols $\dim (S)=n$, if $n$ is the least integer such that, whenever $x'\ll x\ll y_1+\ldots +y_r$, there exist elements $z_{j,k}\in S$ with $j=1,\ldots ,r$ and $k=0,\ldots ,n$ such that
\begin{itemize}
\item[(i)] 
$z_{j,k}\ll y_j$ for every $j$ and $k$;
\item[(ii)] 
$x'\ll \sum_{j,k}z_{j,k}$;
\item[(iii)] 
$\sum_{j}z_{j,k}\ll x$ for each $k$.
\end{itemize}

If no such $n$ exists, we say that $S$ has dimension $\infty$, in symbols $\dim (S)=\infty $.
\end{pgr}

The next result generalizes \cite[Proposition~3.17]{ThiVil22DimCu} to the nonsimple setting.

\begin{prp}
\label{prp:SsoftDim}
Let $S$ be a countably based, left-soft separative, $(2,\omega)$-divisible \CuSgp{} satisfying \axiomO{5}-\axiomO{7}, and assume that $\sigma\colon S\to S_\soft$ is a generalized \CuMor{}. 
Then,
\[
\dim(S_\soft)
\leq \dim(S)
\leq \dim(S_\soft)+1.
\]
\end{prp}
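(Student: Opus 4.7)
My plan is to prove the two inequalities separately.

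For the easy direction $\dim(S_\soft)\leq\dim(S)$, I would exploit the retract structure. Since $S_\soft$ is a sub-\CuSgp{} of $S$ (by \autoref{prp:SoftPartAxioms}), the inclusion $\iota\colon S_\soft\hookrightarrow S$ is a \CuMor{}, and by assumption $\sigma\colon S\to S_\soft$ is an additive generalized \CuMor{} with $\sigma\circ\iota=\id_{S_\soft}$. This is essentially the general principle that Cu-dimension is preserved under retracts. Explicitly, given $\alpha \ll \beta \ll \gamma_1+\cdots+\gamma_r$ in $S_\soft$, I would pick intermediate approximations $\alpha \ll \beta' \ll \beta''\ll \beta$ in $S_\soft$ and $\gamma_j' \ll \gamma_j$ in $S_\soft$ with $\beta\ll\sum\gamma_j'$, apply the hypothesis $\dim(S)\leq d$ to the lifted configuration in $S$ to obtain witnesses $w_{j,k}\in S$, and push them back by setting $z_{j,k}:=\sigma(w_{j,k})\in S_\soft$. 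Additivity and order-preservation of $\sigma$, combined with $\sigma\circ\iota=\id$ and the coincidence of $\ll$ in $S$ and $S_\soft$ for elements of $S_\soft$ (since $S_\soft$ is a sub-\CuSgp{}), show that the three dimension conditions descend to the $z_{j,k}$.

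The harder direction $\dim(S)\leq\dim(S_\soft)+1$ requires constructing one extra ``color''. Write $n:=\dim(S_\soft)$, and let $x' \ll x \ll y_1+\cdots+y_r$ in $S$. I would first pick refined approximations $x' \ll x_1 \ll x_2 \ll x$ and $y_j^{(0)} \ll y_j^{(1)} \ll y_j$ with $x_2\ll\sum y_j^{(0)}$. Iterating $(2,\omega)$-divisibility and invoking the abundance of soft elements (Proposition~7.7 of \cite{ThiVil23arX:Soft}), I would produce a strongly soft $s\in S_\soft$ with $s\ll x$, sufficiently small that $r\cdot s\ll x$, and with $x_1\leq\infty s$. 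Using iterated \axiomO{6}, I would decompose $s$ (after a small approximation) as $s\leq s_1+\cdots+s_r$ with $s_j\in S_\soft$ and $s_j\ll y_j$, so that $\sum_j s_j\leq r\cdot s\ll x$ and $x_1\leq\infty\sum_j s_j$. Applying \autoref{prp:sigmaSoft}(4) then gives $x_1\leq\sigma(x_1)+\sum_j s_j$.

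The ``Glimm-type'' pieces $s_j$ will constitute the extra color $k=0$. For the remaining $(n+1)r$ pieces, I would apply $\dim(S_\soft)=n$ to cover $\sigma(x_1)$ by elements $z_{j,k}\in S_\soft$ ($k=1,\ldots,n+1$) satisfying $z_{j,k}\ll y_j$, $\sigma(x_1)\leq\sum_{k\geq1}z_{j,k}$, and $\sum_j z_{j,k}\ll x_2\ll x$. This requires setting up a configuration $\alpha\ll\beta\ll\gamma_1+\cdots+\gamma_r$ in $S_\soft$ with $\sigma(x_1)\leq\alpha$, $\beta\ll x_2$, and $\gamma_j\leq\sigma(y_j)$. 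Setting $z_{j,0}:=s_j$ then gives $(n+2)r$ pieces, and the three dimension conditions follow: (i) $z_{j,k}\ll y_j$ by construction; (ii) the chain $x'\ll x_1\leq\sigma(x_1)+\sum_j s_j\leq\sum_{j,k\geq 1}z_{j,k}+\sum_j z_{j,0}=\sum_{j,k}z_{j,k}$ upgrades to $x'\ll\sum_{j,k}z_{j,k}$ via $x'\ll x_1$; (iii) $\sum_j z_{j,k}\ll x$ is clear for $k=0$ and holds for $k\geq 1$ by the choice of $\beta$.

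The main obstacle is setting up the $\dim(S_\soft)=n$ covering of $\sigma(x_1)$: since $\sigma$ is only a generalized \CuMor{}, natural candidates such as $\sigma(x_1)\ll\sigma(x_2)$ in $S_\soft$ or $\sigma(x_2)\ll\sum_j\sigma(y_j)$ in $S_\soft$ are not automatic. To overcome this, I would exploit the description $\sigma(x)=\sup L_x$ from \autoref{dfn:sigma}, the upward-directedness of $L_x$ from \autoref{prp:PreLxDirected}, and the countably-based hypothesis, in order to produce witnesses $\alpha,\beta,\gamma_j\in S_\soft$ with the required $\ll$-relations compatible with the approximations of $x_1,x_2$ and the $y_j^{(k)}$ chosen earlier. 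The secondary technical point is arranging the initial $(2,\omega)$-divisibility and \axiomO{6} decomposition so that $\sum_j s_j$ is simultaneously $\ll x$ and large enough that $x_1\leq\infty\sum_j s_j$; this is forced by iterating $(2,\omega)$-divisibility $\lceil\log_2 r\rceil$ times to produce $s$ with $2^{\lceil\log_2 r\rceil}\cdot s\ll x$.
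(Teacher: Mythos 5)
Your overall architecture matches the paper's: the first inequality is the retract principle (your explicit argument is exactly \cite[Proposition~3.15]{ThiVil22DimCu}), and for the second inequality you build one extra colour from a $(2,\omega)$-divisibility/\axiomO{6} decomposition and glue it to an $n$-dimensional covering inside $S_\soft$ via \autoref{prp:sigmaSoft}(4). The genuine gap sits precisely at what you call the ``main obstacle'', and the remedy you sketch cannot work: you require the $S_\soft$-covering to dominate \emph{all} of $\sigma(x_1)$, which forces a scaffold $\sigma(x_1)\leq\alpha\ll\beta$ with $\alpha,\beta\in S_\soft$ and $\beta\ll x_2$. No manipulation of $L_x$, its directedness, or a countable basis produces such an $\alpha$: it would mean $\sigma(x_1)$ is dominated by a soft element compactly contained below $x_2$, and this fails in examples satisfying every hypothesis of the proposition. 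Take $S=Z=\NN\sqcup(0,\infty]$, the Cuntz semigroup of the Jiang--Su algebra (countably based, weakly cancellative hence left-soft separative, $(2,\omega)$-divisible, satisfying \axiomO{5}-\axiomO{7}, with $\sigma$ a generalized \CuMor{} given by $\sigma(x)=1'x$), and let $x'=x_1=x_2=x=y_1=1$ be the compact class, $r=1$. Then $\sigma(x_1)=1'$; any soft $\beta\ll 1$ has the form $t'$ with $t\leq 1$, and $1'\leq\alpha\ll t'$ forces $\alpha=u'$ with $1\leq u<t\leq 1$, a contradiction. So the configuration you need simply does not exist, and the demanded output $\sigma(x_1)\leq\sum_{j,k\geq 1}z_{j,k}$ obtained through the dimension axiom is unattainable this way.

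The correct move --- and the one the paper makes --- is to weaken the goal: condition~(ii) only needs $x'\ll\sum_{j,k}z_{j,k}$, not domination of $\sigma(x_1)$. From \autoref{prp:sigmaSoft}(4) one has $x'\ll x_1\leq\sigma(x_1)+\sum_j s_j$, and since $\sigma(x_1)$ is the supremum of a $\ll$-increasing sequence in $S_\soft$ (\autoref{prp:SoftPartAxioms}), one can choose a single $w\in S_\soft$ with $w\ll\sigma(x_1)$ and $x'\ll w+\sum_j s_j$. One then applies $\dim(S_\soft)\leq n$ to $w\ll\sigma(x_1)\leq\sigma(y_1)+\cdots+\sigma(y_r)$ --- the last inequality being exactly where the generalized-\CuMor{} hypothesis (additivity of $\sigma$) enters. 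The outputs satisfy $z_{j,k}\ll\sigma(y_j)\leq y_j$, $\sum_j z_{j,k}\ll\sigma(x_1)\leq x_1\ll x$, and $x'\ll w+\sum_j s_j\ll\sum_{j,k}z_{j,k}$, which are the three required conditions. With this replacement your outline becomes essentially the paper's proof. Two smaller repairs: \axiomO{6} only yields $s_j\leq y_j$, not $s_j\ll y_j$, so you should obtain the extra-colour pieces as the paper does, via \cite[Proposition~4.10]{ThiVil23Glimm}, which gives $v_j\ll v,y_j$ with $\sum_j v_j\ll rv\leq x$; and there is no need for these pieces to be soft --- the paper's $v_j$ are arbitrary, so your detour through the abundance of soft elements is superfluous.
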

\begin{proof}
Since $\sigma$ is a generalized \CuMor{}, the first inequality follows from \cite[Proposition~3.15]{ThiVil22DimCu}.
To show the second inequality, set $n:=\dim(S_\soft)$, which we may assume to be finite.
To verify that $\dim(S)\leq n+1$, let $x'\ll x\ll y_1+\ldots+y_r$ in $S$.
We need to find $z_{j,k}\in S$ for $j=1,\ldots,r$ and $k=0,\ldots,n+1$ such that
\begin{itemize}
\item[(i)]
$z_{j,k}\ll y_j$ for each $j$ and $k$;
\item[(ii)]
$x'\ll\sum_{j,k}z_{j,k}$;
\item[(iii)]
$\sum_j z_{j,k}\ll x$ for each $k$.
\end{itemize}

First, choose $x'',x'''\in S$ such that $x'\ll x''\ll x''' \ll x$.
Applying that $S$ satisfies \axiomO{6} for $x''\ll x'''\leq y_1+\ldots+y_r$, we obtain $s_1,\ldots,s_r\in S$ such that
\[
x''\ll s_1+\ldots+s_r, \andSep s_j\ll x''',y_j \ \text{ for each } j=1,\ldots,r.
\]

Choose $s_1',\ldots,s_r' \in S$ such that
\[
x''\ll s_1'+\ldots+s_r', \andSep s_j'\ll s_j \ \text{ for each } j=1,\ldots,r.
\]

Using that $S$ is $(2,\omega)$-divisible (and consequently also $(r,\omega)$-divisible by \cite[Paragraph~2.4]{ThiVil23Glimm}), we obtain $v\in S$ such that
\[
rv\leq x, \andSep x'''\leq\infty v.
\]

For each $j$, we have $s_j \ll x''' \leq \infty v$.
Applying \cite[Proposition~4.10]{ThiVil23Glimm} to $s_j'\ll s_j \ll \infty v, \infty y_j$, we obtain $v_j\in S$ such that
\[
s_j' \ll \infty v_j, \andSep v_j\ll v,y_j.
\]

Note that
\[
x''
\ll s_1'+\ldots+s_r'
\leq \infty (v_1+\ldots+v_r), \andSep
v_1+\ldots+v_r \ll rv \leq x.
\]

Now, applying \autoref{prp:sigmaSoft} at the second step, we have
\[
x' \ll x'' \leq \sigma(x'')+(v_1+\ldots+v_r).
\]

Using that $S_\soft$ is a sub-\CuSgp{} by \autoref{prp:SoftPartAxioms}, we can choose an element $w\in S_\soft$ such that
\[
x' \ll w+(v_1+\ldots+v_r), \andSep
w\ll\sigma(x'').
\]

Applying that $\dim(S_\soft)\leq n$ for $w\ll\sigma(x'')\leq\sigma(y_1)+\ldots+\sigma(y_r)$, we obtain $z_{j,k}\in S_\soft$ for $j=1,\ldots,r$ and $k=0,\ldots,n$ such that
\begin{itemize}
\item[(i')]
$z_{j,k}\ll \sigma(y_j)$ for each $j$ and $k=0,\ldots,n$;
\item[(ii')]
$w\ll\sum_{j}\sum_{k=0}^n z_{j,k}$;
\item[(iii')]
$\sum_j z_{j,k}\ll \sigma(x'')$ for each $k=0,\ldots,n$.
\end{itemize}

Set $z_{j,n+1}:=v_j$ for each $j$.
These elements satisfy conditions~(i) and~(iii).
To verify~(ii), we note that
\[
x' 
\ll w+(v_1+\ldots+v_r) 
\ll (\sum_{j}\sum_{k=0}^n z_{j,k} )+(v_1+\ldots+v_r)
= \sum_{j}\sum_{k=0}^{n+1} z_{j,k},
\]
as desired.
\end{proof}

\begin{thm}
\label{prp:SsoftDimGen}
Let $S$ be a left-soft separative, $(2,\omega)$-divisible \CuSgp{} satisfying \axiomO{5}-\axiomO{7}. 
Additionally, assume one of the following:
\begin{itemize}
\item[(i)] 
$S$ is almost unperforated;
\item[(ii)] 
$S$ satisfies the Riesz Interpolation Property, and the interval axiom;
\item[(iii)] 
$S\otimes \{ 0,\infty \}$ is algebraic.
\end{itemize}
Then, $\dim(S_\soft)\leq \dim(S)\leq \dim(S_\soft)+1$.
\end{thm}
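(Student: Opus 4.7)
The approach is to reduce to the countably-based case of \autoref{prp:SsoftDim} via the L\"{o}wenheim-Skolem technique of \cite[Section~5]{ThiVil21DimCu2}, in the same spirit as the last paragraph of the proof of \autoref{prp:SoftRankS}. For any countably-based sub-\CuSgp{} satisfying all the structural hypotheses together with one of the conditions (i), (ii) or (iii), \autoref{prp:SigmaCuMor} ensures that the corresponding map $\sigma$ is a generalized \CuMor{}, which is precisely the missing ingredient needed in order to invoke \autoref{prp:SsoftDim}.

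The first step is to verify that each of the properties appearing in the hypotheses --- left-soft separativity, $(2,\omega)$-divisibility, \axiomO{5}-\axiomO{7}, almost unperforation, the Riesz Interpolation Property, the interval axiom, and the algebraicity of $S\otimes\{0,\infty\}$ --- satisfies the L\"{o}wenheim-Skolem condition; all of them are either known to do so in the literature or are finitary and hence immediate. Condition~(ii), namely Riesz interpolation together with the interval axiom, plays the role of the ``inf-semilattice ordered'' assumption used in \autoref{prp:SigmaCuMor}(ii): a countably-based sub-\CuSgp{} inheriting both of these properties is inf-semilattice ordered, just as in the argument of \cite[Theorem~3.8]{AntPerRobThi22CuntzSR1}.

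The inequality $\dim(S_\soft)\leq\dim(S)$ is the easier one. Assuming $\dim(S)\leq n$ and given $x'\ll x\ll y_1+\ldots+y_r$ in $S_\soft$, I would use the L-S conditions above, the L-S condition for ``$\dim\leq n$'', and \autoref{prp:PassingSoftElementToSubCuSgp} to find a countably-based sub-\CuSgp{} $H\subseteq S$ containing the given elements as strongly soft elements, satisfying all structural hypotheses, and with $\dim(H)\leq n$. Since the inclusion $H_\soft\hookrightarrow S_\soft$ is a \CuMor{}, it suffices to produce the decomposition in $H_\soft$, which follows from \autoref{prp:SsoftDim} applied to $H$.

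The inequality $\dim(S)\leq\dim(S_\soft)+1$ is the main obstacle. One must produce a countably-based sub-\CuSgp{} $H\subseteq S$ as above but with the additional constraint $\dim(H_\soft)\leq\dim(S_\soft)$; this is subtle because $H_\soft$ grows with $H$, so a single application of L-S does not suffice. The plan is an interleaved iterative construction: starting from a countably-based $H_0\subseteq S$ containing $x',x,y_1,\ldots,y_r$, at each stage $i$ one picks a countably-based $K_{i+1}\subseteq S_\soft$ with $\dim(K_{i+1})\leq n$ containing every element of $H_i$ that is strongly soft in $S$, and then enlarges $H_i$ to a countably-based $H_{i+1}\subseteq S$ containing $K_{i+1}$, all witnesses for the structural hypotheses, and the witnesses from \autoref{prp:PassingSoftElementToSubCuSgp} making each element of $K_{i+1}$ strongly soft in $H_{i+1}$. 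The limits $H=\bigcup_i H_i$ and $K=\bigcup_i K_i$ then satisfy $H_\soft=K$, so $\dim(H_\soft)\leq n$, and \autoref{prp:SsoftDim} applied inside $H$ provides the desired decomposition in $H\subseteq S$.
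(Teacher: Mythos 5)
Your overall strategy is exactly the paper's: the published proof consists precisely of verifying that each hypothesis --- \axiomO{5}--\axiomO{7}, left-soft separativity, $(2,\omega)$-divisibility, and the conditions (i)--(iii), using \cite[Lemma~4.16]{ThiVil23Glimm} for (iii) --- satisfies the L\"{o}wenheim--Skolem condition, and then declaring the remainder ``analogous to \cite[Proposition~5.9]{ThiVil21DimCu2} using \autoref{prp:SsoftDim}''. You correctly make explicit the two points the paper leaves implicit: that \autoref{prp:SigmaCuMor} is what supplies the generalized \CuMor{} $\sigma$ required by \autoref{prp:SsoftDim}, and that hypothesis~(ii) is the L\"{o}wenheim--Skolem-friendly surrogate for ``inf-semilattice ordered'' (Riesz interpolation yields pairwise infima in the countably based setting, and the interval axiom yields compatibility of infima with addition). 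Your reduction for the easier inequality $\dim(S_\soft)\leq\dim(S)$, using \autoref{prp:PassingSoftElementToSubCuSgp} to make the given soft configuration soft inside a countably based $H$, is also sound.

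There is, however, a concrete gap in your interleaving argument for $\dim(S)\leq\dim(S_\soft)+1$. First, you ask for a countably based $K_{i+1}\subseteq S_\soft$ containing \emph{every} element of $H_i$ that is strongly soft in $S$; but a countably based $H_i$ typically has uncountably many elements (all suprema of increasing sequences from its countable basis), and its set of soft elements can be uncountable, so this set cannot be fed into the countable-subset form of the L\"{o}wenheim--Skolem lemma. The repair is to arrange each $H_i$ to already satisfy the structural hypotheses, so that $\sigma_{H_i}$ exists and the image under $\sigma_{H_i}$ of a countable basis $B_i$ of $H_i$ is a countable basis of $(H_i)_\soft$, and to feed only that basis into $K_{i+1}$. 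Second, and more seriously, your claimed equality $H_\soft=K$ at the limit is only justified in the direction $K\subseteq H_\soft$ (via the softness witnesses and \autoref{prp:PassingSoftElementToSubCuSgp}); the reverse inclusion is not automatic, because an element of the limit $H$ can be strongly soft in $H$ while arising as the supremum of an increasing sequence of \emph{non-soft} elements of the stages, hence need not belong to, nor be soft in, any $H_i$, and your construction places nothing above it in $K$. One must additionally arrange that the stage bases $\sigma_{H_i}(B_i)$ are cofinal below the soft part of the limit (roughly, that $\sigma_H(b)=\sup_i\sigma_{H_i}(b)$ for basis elements $b$), which is exactly the extra witness-adding bookkeeping that the technique of \cite[Section~5]{ThiVil21DimCu2} performs and that your sketch, as stated, does not ensure.
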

\begin{proof}
By \cite[Proposition~5.3]{ThiVil21DimCu2}, properties \axiomO{5}, \axiomO{6} and \axiomO{7} each satisfy the L\"{o}wenheim-Skolem condition.
Similarly, one can see that left-soft separativity, $(2,\omega )$-divisibility, and the properties listed in (i)-(iii) each satisfy the L\"{o}wenheim-Skolem condition. 
(For (iii), one can use \cite[Lemma~4.16]{ThiVil23Glimm}.) 
The proof is now analoguous to \cite[Proposition~5.9]{ThiVil21DimCu2} using \autoref{prp:SsoftDim}.
\end{proof}

\begin{cor}
\label{prp:dimOpt}
Let $A$ be a \ca{} with the Global Glimm Property. 
Additionally, assume one of the following:
\begin{itemize}
\item[(i)] 
$A$ has strict comparison of positive elements;
\item[(ii)] 
$A$ has stable rank one;
\item[(iii)] 
$A$ has topological dimension zero, and $\Cu (A)$ is left-soft separative.
\end{itemize}
Then, $\dim(\Cu (A)_\soft)\leq \dim(\Cu (A))\leq \dim(\Cu (A)_\soft)+1$.
\end{cor}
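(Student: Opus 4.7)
The plan is simply to show that, in each of the three cases, the Cuntz semigroup $\Cu(A)$ satisfies the hypotheses of \autoref{prp:SsoftDimGen}, and then quote that theorem.

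First, I would record the ambient structure that is case-independent. The Cuntz semigroup of any \ca{} satisfies \axiomO{5}--\axiomO{7}; see \autoref{pgr:CuSgps}. Since $A$ has the Global Glimm Property, \cite[Theorem~3.6]{ThiVil23Glimm} gives that $\Cu(A)$ is $(2,\omega)$-divisible. Thus it only remains to check left-soft separativity together with one of the three extra hypotheses (i)--(iii) of \autoref{prp:SsoftDimGen}.

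In case~(i), strict comparison of positive elements is equivalent to $\Cu(A)$ being almost unperforated by \cite[Proposition~6.2]{EllRobSan11Cone}; this simultaneously gives left-soft separativity via \autoref{prp:StCompLSSep} (or via \autoref{prp:AUnpLSSep}) and condition~(i) of \autoref{prp:SsoftDimGen}. In case~(ii), stable rank one implies weak cancellation of $\Cu(A)$ by \cite[Theorem~4.3]{RorWin10ZRevisited}, hence left-soft separativity by \autoref{prp:WCimpLSSep}; weak cancellation combined with \axiomO{5}--\axiomO{7} yields the Riesz Interpolation Property, and the interval axiom for $\Cu(A)$ is the standard consequence of stable rank one (the relevant interval structure of Cuntz classes in stable rank one \ca{s}), which verifies condition~(ii) of \autoref{prp:SsoftDimGen}. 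In case~(iii), left-soft separativity is assumed outright, and topological dimension zero together with the Global Glimm Property gives that $\Cu(A)\otimes\{0,\infty\}$ is algebraic via \cite[Proposition~4.18]{ThiVil23Glimm}, which is exactly condition~(iii) of \autoref{prp:SsoftDimGen}.

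The only delicate point is case~(ii): strictly speaking, \autoref{prp:SsoftDimGen}(ii) asks for Riesz interpolation and the interval axiom rather than for the inf-semilattice structure of \autoref{pgr:StR1}. If this verification turns out to be awkward, an alternative route is to pass directly through \autoref{prp:SigmaCuMorCStar}(ii) to obtain that $\sigma\colon\Cu(A)\to\Cu(A)_\soft$ is a generalized \CuMor{}, and then apply the countably based statement \autoref{prp:SsoftDim} after a L\"owenheim--Skolem reduction as in the proof of \autoref{prp:SsoftDimGen}. Either way, the corollary follows.
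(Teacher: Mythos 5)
Your overall strategy---verify the hypotheses of \autoref{prp:SsoftDimGen} case by case and then quote that theorem---is exactly the paper's, and your verifications in cases~(i) and~(iii) coincide with the paper's proof (which routes the same checks through the proof of \autoref{prp:SigmaCuMorCStar}). The genuine gap is in case~(ii), which you rightly flagged as the delicate point, but neither of your two resolutions works as written. Your primary route rests on the claim that weak cancellation together with \axiomO{5}--\axiomO{7} yields the Riesz Interpolation Property. That is not a formal consequence of those axioms; for stable rank one it is a substantive theorem---in the separable case it follows from the inf-semilattice structure of \cite[Theorem~3.8]{AntPerRobThi22CuntzSR1}, since $y_1 \wedge y_2$ interpolates whenever $x_1,x_2 \leq y_1,y_2$---and you offer no argument deriving it from weak cancellation alone. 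Similarly, calling the interval axiom ``the standard consequence of stable rank one'' is not a justification: the axiom is introduced in \cite[Definition~9.3]{ThiVil21arX:NowhereScattered}, and its validity under stable rank one must be cited from there, not extracted from \autoref{pgr:StR1}.

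Your fallback route has a separate, concrete defect: \autoref{prp:SigmaCuMorCStar} assumes $A$ is \emph{separable}, whereas the corollary does not, so you cannot ``pass directly through'' it. If instead you run the L\"{o}wenheim--Skolem reduction yourself, you need the countably based sub-\CuSgp{s} of $\Cu(A)$ to satisfy \autoref{prp:SigmaCuMor}(ii), i.e., to be inf-semilattice ordered; but it is neither clear that inf-semilattice orderedness satisfies the L\"{o}wenheim--Skolem condition (existence of infima is not obviously inherited by sub-\CuSgp{s}), nor is $\Cu(A)$ itself known to be inf-semilattice ordered without separability. This mismatch is precisely why \autoref{prp:SsoftDimGen}(ii) is phrased via Riesz interpolation and the interval axiom: these conditions do satisfy the L\"{o}wenheim--Skolem condition, and in a countably based \CuSgp{} Riesz interpolation recovers pairwise infima (the set of common lower bounds is upward directed, so a cofinal increasing sequence from a countable basis has the infimum as supremum), with the interval axiom supplying the compatibility with addition needed in \autoref{prp:SigmaCuMor}. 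The correct repair of case~(ii) is therefore the paper's: quote that stable rank one gives $\Cu(A)$ the Riesz Interpolation Property and the interval axiom, together with left-soft separativity from \autoref{prp:WCimpLSSep}, and apply \autoref{prp:SsoftDimGen}(ii) directly.
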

\begin{proof}
As in the proof of \autoref{prp:SigmaCuMorCStar}, we see that $\Cu(A)$ satisfies the corresponding assumptions of \autoref{prp:SsoftDimGen}, from which the result follows.
\end{proof}

\begin{ntn}
\label{ntn:CrossedProd}
Let $A$ be a \ca{}, and let $\alpha\colon G\to {\rm Aut}(A)$ be an action of a finite group $G$ on $A$. 
We will denote by $C^* (G,A,\alpha )$ the induced crossed product.
 
The \emph{fixed-point algebra} $A^\alpha$ is defined as
\[
A^\alpha := \big\{ a\in A : \alpha_g(a)=a\text{ for all }g\in G \big\}.
\]
\end{ntn}

\begin{pgr}[Fixed-point semigroups]
\label{pgr:FixedPointSgp}
For a group action $\alpha$ on a \ca{} $A$, there are three natural objects that may be seen as the fixed-point semigroup of $\Cu (A)$: The Cuntz semigroup $\Cu (A^\alpha)$, the fixed-point semigroup $\Cu (A)^\alpha$, and the fixed-point \CuSgp{} $\Cu (A)^{\Cu (\alpha )}$.
We give some details.
 
The \emph{fixed-point semigroup} $\Cu (A)^\alpha$ is defined as
\[
\Cu (A)^\alpha
:= \big\{ x \in \Cu (A) : \Cu(\alpha_g) (x) = x \text{ for all } g\in G \big\}.
\]
This is a submonoid of $\Cu(A)$ that is closed under passing to suprema of increasing sequences.
In general, it is not known if or when $\Cu (A)^\alpha$ is a sub-\CuSgp{} of~$\Cu(A)$.

An indexed collection $(x_t)_{t\in (0,1]}$ of elements in $S$ is a \emph{path} if $x_t\ll x_r$ whenever $r<t$ and $x_t = \sup_{r<t}x_r$ for every $t\in (0,1]$. The \emph{fixed-point \CuSgp{}}, as defined in \cite[Definition~2.8]{GarSan16EquivHomoRokhlin}, is 
\[
\Cu (A)^{\Cu (\alpha )} =\left\{
x\in\Cu (A) : 
\exists (x_t)_{t\in (0,1]}\text{ path in }\Cu (A) : 
\begin{array}{l}
x_1=x,\text{ and }\\
\Cu(\alpha_g) (x_t) = x_t \,\,\forall t, g
\end{array}\!\!
\right\}.
\]
Using \cite[Lemma~2.9]{GarSan16EquivHomoRokhlin}, one can show that $\Cu (A)^{\Cu (\alpha )}$ is always a sub-\CuSgp{} of $\Cu(A)$. Note that $\Cu (A)^{\Cu (\alpha )}$ is contained in $\Cu (A)^{\alpha}$. 
In \autoref{prp:FixedCuWTRP} we will see a situation in which $\Cu(A)^\alpha$ and $\Cu (A)^{\Cu (\alpha )}$ agree.
\end{pgr}

\begin{lma}
\label{prp:FixedSemigroupAxioms}
Let $S$ be an inf-semilattice ordered \CuSgp{}, and let $\alpha$ be an action of a finite group $G$ on $S$ by $\Cu$-isomorphisms on $S$.
Then the fixed-point semigroup $S^\alpha := \{ x \in S : \alpha_g(x)=x \text{ for all } g\in G \}$ is a sub-\CuSgp{} of $S$.

Moreover, if $S$ satisfies weak cancellation (resp. \axiomO{5}, \axiomO{6}, \axiomO{7}), then so does~$S^\alpha$.
\end{lma}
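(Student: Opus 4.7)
The plan is to introduce the averaging retraction $\rho\colon S\to S$ given by
\[
\rho(x) := \bigwedge_{g\in G}\alpha_g(x),
\]
which is well-defined because $G$ is finite and $S$ is inf-semilattice ordered. Since each $\alpha_h$ is a $\Cu$-isomorphism it preserves finite infima, and the direct computation $\alpha_h(\rho(x)) = \bigwedge_g \alpha_{hg}(x) = \rho(x)$ shows that $\rho$ lands in $S^\alpha$; moreover $\rho$ restricts to the identity on $S^\alpha$. The map $\rho$ is order-preserving, and it commutes with suprema of increasing sequences once one establishes that the binary infimum preserves increasing suprema in an inf-semilattice ordered \CuSgp{}, which follows by a standard argument using \axiomO{1}-\axiomO{2} and the interpolation property of $\ll$.

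To verify that $S^\alpha$ is a sub-\CuSgp{}, the straightforward parts---that $S^\alpha$ is a submonoid and is closed under suprema of increasing sequences---follow because each $\alpha_g$ is a \CuMor. The main task is to produce, for every $x\in S^\alpha$, a $\ll$-increasing sequence in $S^\alpha$ with supremum $x$. Starting with a $\ll$-increasing sequence $(x_n)_n$ in $S$ with supremum $x$, I would first pass to a subsequence so that $\alpha_g(x_n)\ll x_{n+1}$ for every $g\in G$ and every $n$; this is possible because $\alpha_g(x_n)\ll\alpha_g(x)=x$ for each of the finitely many $g\in G$, so a uniform index works. Setting $y_n := \rho(x_n)$ gives a sequence in $S^\alpha$ with $\sup_n y_n = \rho(x) = x$, and the way-below relation $y_n\ll y_{n+1}$ is obtained from $\alpha_g(x_n)\ll\alpha_g(x_{n+1})$ via the fact that in inf-semilattice ordered \CuSgp{s} a finite family of way-below pairs descends to their infima, i.e.\ $c_g\ll d_g$ for all $g$ in a finite set implies $\bigwedge_g c_g\ll\bigwedge_g d_g$.

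For the moreover part, weak cancellation transfers immediately because the inclusion $S^\alpha\hookrightarrow S$ is a \CuMor{}, so both preserves and reflects $\ll$. For \axiomO{5}, \axiomO{6}, and \axiomO{7}, the uniform strategy is to apply the corresponding axiom in $S$ to the given data from $S^\alpha$, producing a witness $c\in S$, and then replace it by $\rho(c)\in S^\alpha$. Inequalities of the form $z\leq x+c$ with $x,z\in S^\alpha$ transfer to $z\leq x+\rho(c)$ since $z=\alpha_g(z)\leq\alpha_g(x+c)=x+\alpha_g(c)$ for all $g$ forces $z$ to be bounded by the infimum $\bigwedge_g(x+\alpha_g(c))=x+\rho(c)$, the last equality being precisely the inf-semilattice axiom $(a+z)\wedge(b+z)=(a\wedge b)+z$; inequalities of the form $x'+c\leq z$ are averaged by the same distributive identity, and way-below conclusions like $y'\ll c\implies y'\ll\rho(c)$ for $y'\in S^\alpha$ again invoke the preservation of $\ll$ under finite infima. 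The main obstacle throughout is this preservation property of $\wedge$ with respect to the way-below relation: it is used both in constructing the $\ll$-increasing approximation in $S^\alpha$ and in closing the way-below conclusions in \axiomO{5}-\axiomO{7}, and it is proven by carefully interleaving $\ll$-approximations across the finite index set $G$ and exploiting the continuity of $\wedge$ established in the first paragraph.
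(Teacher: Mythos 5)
Your overall architecture is the same as the paper's: average by the map $\Phi(x):=\bigwedge_{g\in G}\alpha_g(x)$, transfer order inequalities through $\Phi$ via the distributivity $(a+z)\wedge(b+z)=(a\wedge b)+z$, and note that weak cancellation passes automatically to a sub-\CuSgp{}. All of that is correct. But the one lemma you lean on at every critical point --- that $c_g\ll d_g$ for all $g$ in a finite set implies $\bigwedge_g c_g\ll\bigwedge_g d_g$ in an inf-semilattice ordered \CuSgp{} --- is \emph{false}, and no amount of ``interleaving $\ll$-approximations'' will prove it. Taking $c_g=d_g$ compact, the claim would force the infimum of finitely many compact elements to be compact, and this fails: let $A$ be a simple, separable, unital AF algebra whose $K_0(A)$ is $\mathbb{Z}^2$ with the strict ordering, so that $\Cu(A)$ is inf-semilattice ordered by \cite[Theorem~3.8]{AntPerRobThi22CuntzSR1}, and let $p,q$ be projections with $K_0$-classes $(2,1)$ and $(1,2)$. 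Then $w:=[p]\wedge[q]$ is a nonzero soft element (the only compact lower bound of the pair is $0$, while the constant function $1$ is the largest affine minorant of the two rank functions), hence $w\not\ll w$. The same example kills the weaker variant you actually need in the \axiomO{5}--\axiomO{7} step: since anything dominated by a compact element is way below it, $w\ll[p]$ and $w\ll[q]=\alpha_g([p])$-type relations hold for each $g$ separately, yet $w\not\ll w=\bigwedge$. Relatedly, your first-paragraph claim that $\wedge$ preserves suprema of increasing sequences is true (and easy: if $z\ll x\wedge y$ then $z\le x$ and $z\le y_n$ eventually, so $z\le x\wedge y_n$; now take suprema using \axiomO{2}), but Scott continuity of $\wedge$ does not imply that $\wedge$ preserves $\ll$, so the deduction is a non sequitur.

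The gap is repairable, and the paper's proof shows how: never push $\ll$ through the infimum; instead interpose an extra fixed point and let the way-below relation come from elements of the original sequence. For the sub-\CuSgp{} part, given $x\in S^\alpha$ and $y\ll x$, one has $x=\alpha_g(x)=\sup_n\alpha_g(x_n)$ for each of the finitely many $g$, whence $x=\sup_n\Phi(x_n)$ (this needs only \axiomO{2} and finiteness of $G$, not your lemma); choosing $n_0$ with $y\le\Phi(x_{n_0})$, the element $x':=\Phi(x_{n_0})$ satisfies $y\le x'\le x_{n_0}\ll x$, and this criterion suffices --- your plan of extracting a subsequence with $\Phi(x_n)\ll\Phi(x_{n+1})$ can also be salvaged this way, via $\Phi(x_n)\le x_n\ll x_{n+1}\le\Phi(x_m)$ for suitable $m$, but not via the false lemma. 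For \axiomO{5}, first choose $y''\in S^\alpha$ with $y'\ll y''\ll y$ (possible once $S^\alpha$ is known to be a sub-\CuSgp{}), apply \axiomO{5} in $S$ with $y''$ in place of $y'$ to obtain the witness $c$ with $x'+c\le z\le x+c$ and $y''\ll c$; then $y'\ll y''=\Phi(y'')\le\Phi(c)$, and the order inequalities transfer by distributivity exactly as you describe. The analogous interposition handles \axiomO{6} and \axiomO{7}.
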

\begin{proof}
Define $\Phi \colon S \to S^\alpha$ by
\[
\Phi(x) := \bigwedge_{g \in G} \alpha_g(x)
\]
for $x \in S$.
For each $x \in S$, we have $\Phi(\Phi(x)) = \Phi(x) \leq x$;
and we have $\Phi(x)=x$ if and only if $x \in S^\alpha$.

It is straightforward to verify that $S^\alpha$ is a submonoid that is closed under suprema of increasing sequences.
To show that $S^\alpha$ is a sub-\CuSgp{}, it remains to verify that for given $x \in S^\alpha$ and $y \in S$ with $y \ll x$, there exists $x' \in S^\alpha$ with $y \leq x' \ll x$.

Let $(x_n)_n$ be a $\ll$-increasing sequence in $S$ with supremum $x$.
For each $g \in G$, we have $x = \alpha_g(x) = \sup_n \alpha_g(x_n)$, and it follows that
\[
x = \Phi(x) = \sup_n \Phi(x_n).
\]
Hence, there exists $n_0$ such that $y \leq \Phi(x_{n_0})$.
Set $x' := \Phi(x_{n_0})$.
Then $x' \in S^\alpha$ and
\[
y \leq x' \leq x_{n_0} \ll x,
\]
which shows that $x'$ has the desired properties.
Thus $S^\alpha$ is a sub-\CuSgp.

Since $S^\alpha$ is a sub-\CuSgp{} of $S$, it follows that $S^\alpha$ is weakly cancellative whenever $S$ is.
Assuming that $S$ satisfies \axiomO{5}, let us verify that so does $S^\alpha$.
Let $x',x,y',y,z \in S^\alpha$ satisfy
\[
x' \ll x, \quad
y' \ll y, \andSep
x+y \leq z.
\]
Choose $y'' \in S^\alpha$ satisfying $y' \ll y'' \ll y$.
Applying \axiomO{5} in $S$, we obtain $c \in S$ such that
\[
x' + c \leq z \leq x + c, \andSep
y'' \ll c.
\]
We claim that $\Phi(c)$ has the desired properties.
Indeed, for each $g \in G$, we have
\[
z 
= \alpha_g(z)
\leq \alpha_g(x+c)
= x + \alpha_g(c).
\]
Using that $S$ is semilattice-ordered, we get
\[
z 
\leq \bigwedge_{g \in G} \big( x + \alpha_g(c) \big)
= x + \bigwedge_{g \in G} \alpha_g(c)
= x + \Phi(c).
\]
We also have
\[
x'+\Phi(c) \leq x'+c \leq z, \andSep
y' \ll y'' = \Phi(y'') \leq \Phi(c).
\]

Assuming that $S$ satisfies \axiomO{6}, let us verify that so does $S^\alpha$.
Let $x',x,y,z \in S^\alpha$ satisfy
\[
x' \ll x \leq y + z.
\]
It suffices to find $\tilde{e} \in S^\alpha$ such that
\[
x ' \leq \tilde{e} + z, \andSep
\tilde{e} \leq x,y.
\]
(One can then apply this argument with the roles of $y$ and $z$ reversed to verify \axiomO{6}.)
Applying \axiomO{6} in $S$, we obtain $e \in S$ such that
\[
x' \leq e + z, \andSep
e \leq x,y.
\]

For each $g \in G$, we have
\[
x' = \alpha_g(x') \leq \alpha_g(e+z) = \alpha_g(e)+z.
\]
Using that $S$ is semilattice-ordered, we get
\[
x' \leq \bigwedge_{g \in G} \big( \alpha_g(e) + z \big)
= \left(\bigwedge_{g \in G} \alpha_g(c) \right) + z
= \Phi(e) + z.
\]
Further, we have
\[
\Phi(e) \leq e \leq x,y,
\]
which shows that $\tilde{e} := \Phi(e) \in S^\alpha$ has the desired properties.

Similarly, one shows that \axiomO{7} passes from $S$ to $S^\alpha$.
\end{proof}

We refer to \cite[Definition~2.2]{GarHirSan21RokDim} for the definition of the weak tracial Rokhlin property.
The first isomorphism in the statement below is well known, but we add it here for the convenience of the reader.

\begin{prp}
\label{prp:FixedCuWTRP}
Let $A$ be a non-elementary, stably finite, simple, unital \ca{}, and let $\alpha$ be a finite group action on $A$ that has the weak tracial Rokhlin property. 
Then we have 
\[
\Cu(C^*(G, A, \alpha)) 
\cong \Cu(A^\alpha), \andSep
\Cu(A)^{\Cu(\alpha)} 
= \Cu(A)^\alpha.
\]
Restricting to the soft parts, we obtain:
\[
\Cu(C^*(G, A, \alpha))_\soft
\cong \Cu(A^\alpha)_\soft 
\cong \Cu(A)^{\Cu(\alpha)}_\soft 
= \Cu(A)^\alpha \cap \Cu(A)_\soft.
\]

If, moreover, $A$ is separable and has stable rank one, then $\Cu (A)^\alpha$ is a simple, countably based, weakly cancellative, $(2,\omega )$-divisible sub-\CuSgp{} of $\Cu (A)$ satisfying \axiomO{5}-\axiomO{7}.
\end{prp}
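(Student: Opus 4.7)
The plan is to handle the four assertions in sequence, leveraging the prior \autoref{prp:FixedSemigroupAxioms} for the structural part at the end. For the first isomorphism $\Cu(C^*(G,A,\alpha)) \cong \Cu(A^\alpha)$, I would appeal to the classical Morita equivalence between the fixed-point algebra and the crossed product for a finite group action with the weak tracial Rokhlin property on a simple unital \ca{}; since Cuntz semigroups are invariants of stable (hence Morita) equivalence, the isomorphism follows immediately.

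The middle assertions rest on establishing $\Cu(A^\alpha) \cong \Cu(A)^\alpha$ via the \CuMor{} induced by the inclusion $A^\alpha \hookrightarrow A$. The image of this map lies in $\Cu(A)^\alpha$ by construction. For surjectivity, I would use the WTRP to produce, for any $\alpha$-fixed Cuntz class $[a] \in \Cu(A)$, a Cuntz-equivalent positive representative living in $A^\alpha \otimes \KK$ via an averaging argument; for injectivity, a parallel averaging argument refines any witnessing Cuntz subequivalence sequence in $A \otimes \KK$ into one inside $A^\alpha \otimes \KK$. With this isomorphism in hand, I would establish $\Cu(A)^{\Cu(\alpha)} = \Cu(A)^\alpha$ (the reverse inclusion being automatic from \autoref{pgr:FixedPointSgp}) by taking, for $x \in \Cu(A)^\alpha$, a $\ll$-increasing sequence from $\Cu(A^\alpha) \cong \Cu(A)^\alpha$ converging to $x$ and interpolating it to a continuous path of $\alpha$-fixed elements via cut-downs in $A^\alpha \otimes \KK$. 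The soft-part isomorphisms are then immediate: every isomorphism produced is a \CuMor, and \CuMor{s} preserve strongly soft elements, while the final equality $\Cu(A)^{\Cu(\alpha)}_\soft = \Cu(A)^\alpha \cap \Cu(A)_\soft$ is then a formal consequence.

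For the concluding assertion, assume $A$ is separable with stable rank one. Then $\Cu(A)$ is inf-semilattice ordered by \cite[Theorem~3.8]{AntPerRobThi22CuntzSR1}, and \autoref{prp:FixedSemigroupAxioms} applies directly to give that $\Cu(A)^\alpha$ is a sub-\CuSgp{} inheriting weak cancellation and \axiomO{5}-\axiomO{7}. Countable basedness transfers from $\Cu(A^\alpha)$ using the isomorphism $\Cu(A)^\alpha \cong \Cu(A^\alpha)$ together with separability of $A^\alpha$. Simplicity of $\Cu(A)^\alpha$ follows from simplicity of $A^\alpha$, which is a standard consequence of the WTRP (e.g.\ via pointwise outerness). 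For $(2,\omega)$-divisibility, I would observe that $A^\alpha$ is simple, unital and non-elementary (inheriting non-elementarity from $A$ through the WTRP), hence has the Global Glimm Property, so \cite[Theorem~3.6]{ThiVil23Glimm} delivers $(2,\omega)$-divisibility of $\Cu(A^\alpha) \cong \Cu(A)^\alpha$.

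The main obstacle will be establishing the isomorphism $\Cu(A^\alpha) \cong \Cu(A)^\alpha$: the averaging arguments using the WTRP must simultaneously yield injectivity and surjectivity, and the defining approximate-central Rokhlin-type elements have to be converted into statements about Cuntz subequivalence in a controlled way. Once this core isomorphism is in place, each of the remaining parts reduces cleanly either to transferring properties across a known \CuMor{} or to invoking the already-established \autoref{prp:FixedSemigroupAxioms}.
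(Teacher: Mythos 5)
Your argument routes everything through a claimed isomorphism $\Cu(A^\alpha)\cong\Cu(A)^\alpha$, to be proven by averaging over $G$, and this is where the proposal has a genuine gap. Note first that the statement does not assert this isomorphism, and the paper carefully avoids it: the results available under the weak tracial Rokhlin property, namely \cite[Lemma~5.4]{AsaGolPhi21RadCompCrProd} and \cite[Theorem~5.5]{AsaGolPhi21RadCompCrProd}, give surjectivity of $\Cu(\iota)$ onto the invariant \emph{soft} classes and an order-isomorphism between the \emph{soft} (purely positive) parts only. On compact classes nothing of the sort is established: averaging a positive element over the group does not preserve its Cuntz class, and the WTRP only produces tracially small error terms, which can be absorbed when comparing soft elements but not when matching projection classes, where K-theoretic obstructions enter. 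So your "averaging argument" for injectivity and surjectivity of $\Cu(\iota)$ on the whole semigroup is an unproven (and quite possibly false) strengthening of the known soft-part results, and since your proofs of $\Cu(A)^{\Cu(\alpha)}=\Cu(A)^\alpha$, of the soft-part chain, and of countable basedness, simplicity, and $(2,\omega)$-divisibility of $\Cu(A)^\alpha$ all pass through this isomorphism, the gap propagates through most of the proposal.

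The paper's proof is structured precisely to avoid needing it, via a dichotomy you do not use: since $A$ is simple and stably finite, every element of $\Cu(A)$ is either compact or soft. For compact $x\in\Cu(A)^\alpha$, membership in $\Cu(A)^{\Cu(\alpha)}$ is witnessed by the \emph{constant} path $x_t=x$ --- no preimage in $\Cu(A^\alpha)$ is required --- while for soft $x$ one invokes \cite[Lemma~5.4]{AsaGolPhi21RadCompCrProd} to write $x=\Cu(\iota)(y)$ with $y$ soft, and observes that the image of $\Cu(\iota)$ lands in $\Cu(A)^{\Cu(\alpha)}$ by \autoref{pgr:FixedPointSgp}; together with the reverse inclusion this gives the set equality, and the soft-part chain then comes from \cite[Theorem~5.5]{AsaGolPhi21RadCompCrProd} rather than from restricting a global isomorphism. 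Likewise the final structural assertions are proven intrinsically: once \autoref{prp:FixedSemigroupAxioms} shows $\Cu(A)^\alpha$ is a sub-\CuSgp{} (your use of that lemma for \axiomO{5}-\axiomO{7} and weak cancellation is the one part that matches the paper, as does your Morita-equivalence step via the corner embedding and \cite[Corollary~5.4]{HirOro13TraciallyZstable}), simplicity and weak cancellation are inherited directly from $\Cu(A)$, and $(2,\omega)$-divisibility follows from $(2,\omega)$-divisibility of $\Cu(A)$ combined with \cite[Lemma~5.2]{AsaGolPhi21RadCompCrProd} (a nonzero invariant class below any nonzero class) and simplicity --- not from transferring the Global Glimm Property of $A^\alpha$ across the unestablished isomorphism. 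To repair your proposal, replace the global $\Cu(A^\alpha)\cong\Cu(A)^\alpha$ claim by the compact/soft case split and the cited AGP lemmas.
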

\begin{proof}
For any action of a finite group on a unital \ca{}, the fixed-point algebra is $\ast$-isomorphic to a corner of the crossed product; 
see \cite[Lemma~4.3(4)]{AsaGolPhi21RadCompCrProd}. 
By \cite[Corollary~5.4]{HirOro13TraciallyZstable}, $C^*(G, A, \alpha)$ is simple, which implies that $C^*(G, A, \alpha)$ and~$A^\alpha$ are Morita equivalent and therefore have isomorphic Cuntz semigroups.

As noted in \autoref{pgr:FixedPointSgp}, $\Cu (A)^{\Cu (\alpha )}$ is contained in $\Cu(A)^\alpha$ in general, and $\Cu (A)^{\Cu (\alpha )}$ is always a sub-\CuSgp{} of~$\Cu(A)$. 
Let $\iota \colon A ^\alpha \to A$ denote the inclusion map, and note that $\Cu(\iota)$ takes image in $\Cu(A)^{\Cu(\alpha)}$.

To show that $\Cu(A)^\alpha$ is contained in $\Cu (A)^{\Cu (\alpha )}$, let $x \in \Cu (A)^\alpha$.
If $x$ is compact in $\Cu(A)$, then we can use the constant path $x_t = x$ to see that $x \in \Cu (A)^{\Cu(\alpha)}$.
On the other hand, if $x$ is soft, then we can apply \cite[Lemma~5.4]{AsaGolPhi21RadCompCrProd} to obtain $y \in \Cu(A^\alpha)_\soft$ such that $x = \Cu(\iota)(y)$.
Since $\Cu(\iota)$ takes image in $\Cu(A)^{\Cu(\alpha)}$, we have $x \in \Cu (A)^{\Cu(\alpha)}$.
Since $A$ is simple and stably finite, every Cuntz class is either compact or soft, and we have $\Cu(A)^{\Cu(\alpha)} = \Cu(A)^\alpha$.

We have shown
\[
\Cu(C^*(G, A, \alpha)) 
\cong \Cu(A^\alpha), \andSep
\Cu(A)^{\Cu(\alpha)} 
= \Cu(A)^\alpha.
\]

We know from \cite[Theorem~5.5]{AsaGolPhi21RadCompCrProd} that $\Cu(\iota)$ induces an order-isomorphism between the soft part of $\Cu(A^\alpha)$ and $\Cu(A)^\alpha \cap \Cu(A)_\soft$, the $\alpha$-invariant elements in $\Cu(A)_\soft$.
It is easy to see that $\Cu(\iota)$ maps $\Cu(A)_\soft$ into $\Cu(A)^{\Cu(\alpha)}_\soft$, and that $\Cu(A)^{\Cu(\alpha)}_\soft$ is contained in $\Cu(A)^\alpha \cap \Cu(A)_\soft$.
Together, we get 
\[
\Cu(A^\alpha)_\soft 
\xrightarrow[\Cu(\iota)]{\cong} \Cu(A)^{\Cu(\alpha)}_\soft 
= \Cu(A)^\alpha \cap \Cu(A)_\soft.
\]

Since $A^\alpha$ is a simple, nonelementary \ca{}, $\Cu (A^\alpha)$ is a simple, $(2,\omega)$-divisible \CuSgp{} satisfying \axiomO{5}-\axiomO{7}.
It follows from \autoref{prp:SoftPartAxioms} that $\Cu(A^\alpha)_\soft$ is a \CuSgp{} 
that also satisfies \axiomO{5}-\axiomO{7}.

\medskip

Finally, assume that $A$ is also separable and has stable rank one.
Then $\Cu(A)$ is a \CuSgp{} satisfying \axiomO{5}-\axiomO{7}.
Further, $\Cu(A)$ is weakly cancellative and inf-semilattice ordered by \cite[Theorem~4.3]{RorWin10ZRevisited} and \cite[Theorem~3.8]{AntPerRobThi22CuntzSR1}.
Hence, $\Cu(A)^\alpha$ satisfies \axiomO{5}-\axiomO{7} by \autoref{prp:FixedSemigroupAxioms}.

We have seen that $\Cu (A)^\alpha$ is a sub-\CuSgp{} of $\Cu(A)$.
Thus, since $\Cu (A)$ is simple and weakly cancellative, so is $\Cu (A)^\alpha$.
To verify $(2,\omega)$-divisibility, let $x \in \Cu (A)^\alpha$.
Since $A$ is simple and non-elementary, we know from \autoref{pgr:GGPNSCa} that $\Cu(A)$ is $(2,\omega)$-divisible.
Hence, there exists $y \in \Cu (A)$ such that $2y\leq x\leq\infty y$. 
Using \cite[Lemma~5.2]{AsaGolPhi21RadCompCrProd}, we find a nonzero element $z \in \Cu(A)^\alpha$ satisfying $z\leq y$. 
Then $2z \leq x \leq \infty z$, a priori in $\Cu(A)$, but then also in $\Cu(A)^\alpha$ since the inclusion $\Cu(A)^\alpha \to \Cu(A)$ is an order-embedding.
\end{proof}

\begin{thm}
\label{prp:DimWTRP}
Let $A$ be a non-elementary, separable, simple, unital \ca{} of stable rank one, and let $\alpha$ be a finite group action on $A$ that has the weak tracial Rokhlin property. 
Then
\begin{align}
\label{prp:DimWTRP:Eq1}
\dim \big( \Cu (C^*(G, A, \alpha) ) \big) 
= \dim \big(  \Cu (A^\alpha )  \big),
\end{align}
and
\begin{align*}
\dim \big( \Cu (A)^{\Cu(\alpha)} \big) -1
\leq \dim \big( \Cu (A^\alpha)  \big) 
\leq \dim \big( \Cu (A)^{\Cu(\alpha)}  \big) +1.
\end{align*}
\end{thm}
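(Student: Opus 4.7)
The first equality, $\dim(\Cu(C^*(G,A,\alpha))) = \dim(\Cu(A^\alpha))$, is immediate from the order-isomorphism $\Cu(C^*(G,A,\alpha)) \cong \Cu(A^\alpha)$ supplied by \autoref{prp:FixedCuWTRP}, so I focus on the second assertion. The plan is to sandwich both $\dim(\Cu(A^\alpha))$ and $\dim(\Cu(A)^{\Cu(\alpha)})$ between a common value $d$ and $d+1$, and then combine. Precisely, \autoref{prp:FixedCuWTRP} also provides an order-isomorphism $\Cu(A^\alpha)_\soft \cong \Cu(A)^{\Cu(\alpha)}_\soft$, so these two soft parts share a common dimension, which I call $d$. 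Once I can apply the inequality $\dim(S_\soft) \leq \dim(S) \leq \dim(S_\soft)+1$ (see \autoref{prp:SsoftDim}) to both $S = \Cu(A^\alpha)$ and $S = \Cu(A)^{\Cu(\alpha)} = \Cu(A)^\alpha$, I obtain
\[
d \leq \dim(\Cu(A^\alpha)) \leq d+1 \andSep d \leq \dim(\Cu(A)^{\Cu(\alpha)}) \leq d+1,
\]
which immediately yields $|\dim(\Cu(A^\alpha)) - \dim(\Cu(A)^{\Cu(\alpha)}) | \leq 1$, equivalently the desired double inequality.

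To apply the dimension estimate to $\Cu(A^\alpha)$, I would invoke \autoref{prp:dimOpt} directly. Since $A$ is simple and non-elementary, and $A^\alpha$ is Morita equivalent to the simple \ca{} $C^*(G,A,\alpha)$, the fixed-point algebra $A^\alpha$ is itself simple and non-elementary, hence has the Global Glimm Property. Under the weak tracial Rokhlin property, stable rank one passes from $A$ to $A^\alpha$ (via preservation results in the spirit of \cite{AsaGolPhi21RadCompCrProd}), placing $A^\alpha$ in alternative~(ii) of \autoref{prp:dimOpt}.

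For $T := \Cu(A)^\alpha$, \autoref{prp:FixedCuWTRP} already guarantees that $T$ is a simple, countably based, weakly cancellative, $(2,\omega)$-divisible sub-\CuSgp{} of $\Cu(A)$ satisfying \axiomO{5}-\axiomO{7}. Weak cancellation gives left-soft separativity by \autoref{prp:WCimpLSSep}. Since $A$ is separable and of stable rank one, $\Cu(A)$ is inf-semilattice ordered (\autoref{pgr:StR1}); and since each $\Cu(\alpha_g)$ is an order-isomorphism, infima of $\alpha$-invariant pairs are again $\alpha$-invariant, so $T$ is inf-semilattice ordered in its own right. Alternative~(ii) of \autoref{prp:SigmaCuMor} then yields that $\sigma \colon T \to T_\soft$ is a generalized \CuMor{}, and \autoref{prp:SsoftDim} delivers the dimension sandwich inequality for $T$. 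The main technical point will be this verification of the hypotheses for $T$ together with the citation for stable rank one of $A^\alpha$; the arithmetic combining the two sandwich inequalities at the end is entirely routine.
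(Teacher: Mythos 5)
Your overall strategy---sandwiching both $\dim(\Cu(A^\alpha))$ and $\dim(\Cu(A)^{\Cu(\alpha)})$ between the common value $d = \dim(\Cu(A^\alpha)_\soft) = \dim(\Cu(A)^{\Cu(\alpha)}_\soft)$ and $d+1$, using the isomorphism of soft parts from \autoref{prp:FixedCuWTRP}---is exactly the paper's, and your treatment of $T = \Cu(A)^{\Cu(\alpha)}$ is a legitimate variant: where the paper invokes \autoref{prp:SsoftDimGen}~(iii) (simplicity of $T$ makes $T \otimes \{0,\infty\}$ algebraic), you verify that $T$ is inf-semilattice ordered (invariant pairs have invariant infima, since each $\Cu(\alpha_g)$ preserves infima) and route through \autoref{prp:SigmaCuMor}~(ii) and \autoref{prp:SsoftDim}; both work, since \autoref{prp:FixedCuWTRP} supplies countable basedness, weak cancellation (hence left-soft separativity), $(2,\omega)$-divisibility, and \axiomO{5}-\axiomO{7} for $T$.

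However, your argument for the $\Cu(A^\alpha)$ side has a genuine gap. You assert that ``under the weak tracial Rokhlin property, stable rank one passes from $A$ to $A^\alpha$ (via preservation results in the spirit of \cite{AsaGolPhi21RadCompCrProd})'' in order to place $A^\alpha$ in alternative~(ii) of \autoref{prp:dimOpt}. No such result is available at this level of generality: the known preservation theorem (\cite{Arc11CrProdTrRP}, transferred to $A^\alpha$ via \cite[Lemma~4.3]{AsaGolPhi21RadCompCrProd}) requires the \emph{tracial} Rokhlin property together with real rank zero and order on projections determined by traces---which is precisely why \autoref{exa:rcWTRP} adds those hypotheses, and why the paper explicitly flags the general question as open (see the discussion after Corollary~5.6 in \cite{AsaGolPhi21RadCompCrProd}). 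The paper sidesteps this entirely: since $A$ has stable rank one it is stably finite, hence so is its unital subalgebra $A^\alpha$, which is also simple; the sandwich inequality $\dim(\Cu(A^\alpha)_\soft) \leq \dim(\Cu(A^\alpha)) \leq \dim(\Cu(A^\alpha)_\soft)+1$ then follows directly from \cite[Remark~3.18]{ThiVil22DimCu} for simple, stably finite \ca{s}, with no stable rank one (or left-soft separativity) needed for $A^\alpha$ itself. Replacing your appeal to \autoref{prp:dimOpt}~(ii) by this citation repairs the proof; the rest of your argument then goes through.
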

\begin{proof}
By \autoref{prp:FixedCuWTRP}, we have
\[
\Cu(C^*(G, A, \alpha)) 
\cong \Cu(A^\alpha),
\]
which immediately proves \eqref{prp:DimWTRP:Eq1}.

It also follows from \autoref{prp:FixedCuWTRP} that $\Cu (A)^{\Cu(\alpha)}$ is a simple, weakly cancellative (hence left-soft separative), $(2,\omega )$-divisible sub-\CuSgp{} of $\Cu (A)$ satisfying \axiomO{5}-\axiomO{7}.
Since $S$ is simple, $S\otimes\{0,\infty\}$ is algebraic.
(In fact, $S\otimes\{0,\infty\} \cong \{0,\infty\}$.)
Therefore, we can apply \autoref{prp:SsoftDimGen}~(iii) to obtain 
\begin{align*}
\dim \big( \Cu (A)^{\Cu(\alpha)}_\soft \big)
\leq \dim \big( \Cu (A)^{\Cu(\alpha)} \big)
\leq \dim \big( \Cu (A)^{\Cu(\alpha)}_\soft \big) + 1.
\end{align*}

Further, since $A^{\alpha }$ is simple and stably finite, we know from \cite[Remark~3.18]{ThiVil22DimCu} that 
\begin{align*}
\dim \big( \Cu (A^{\alpha})_\soft \big)
\leq \dim \big( \Cu (A^{\alpha}) \big)
\leq \dim \big( \Cu (A^{\alpha})_\soft \big) + 1.
\end{align*}

The result now follows since $\Cu (A^{\alpha})_\soft \cong \Cu (A)^{\Cu(\alpha)}_\soft$;
see \autoref{prp:FixedCuWTRP}.
\end{proof}

\begin{exa}
\label{exa:WTRP}
Let $n \geq 2$, and let $G$ be $S_n$, the symmetric group on the set $\{1,...,n\}$. Let
$A=\mathcal{Z}^{\otimes n}\cong \mathcal{Z}$, and let $\alpha \colon G \to \Aut (A)$ be the permutation action given by
\[
\alpha_{\theta}(a_1 \otimes a_2\otimes \ldots  \otimes a_n) = a_{\theta^{-1}(1)} \otimes a_{\theta^{-1}(2)} \otimes \ldots \otimes a_{\theta^{-1}(n)}.
\]

It follows from \cite[Example~5.10]{HirOro13TraciallyZstable} that $\alpha$ has the weak tracial Rokhlin property. 
Thus, using \autoref{prp:DimWTRP}, one has
\[
\dim \big(\Cu (A^\alpha)\big) 
=\dim \big(\Cu (C^*(G, A, \alpha))\big).
\]
 
The crossed product $\Cu (C^*(G, A, \alpha))$ is simple and $\mathcal{Z}$-stable; 
see Corollaries~5.4 and~5.7 from \cite{HirOro13TraciallyZstable}. 
Therefore, it follows from \cite[Proposition~3.22]{ThiVil22DimCu} that 
\[
\dim \big(\Cu (A^\alpha)\big) 
= \dim \big(\Cu (C^*(G, A, \alpha))\big)
\leq 1
\]
and, moreover, we have  $\dim ( \Cu (A)^{\Cu(\alpha)})\leq 2$ by \autoref{prp:DimWTRP}.
\end{exa}

\section{Radius of comparison of a Cuntz semigroup and its soft part}
\label{sec:RC}

In this section we show that, under the assumptions of \autoref{sec:SoftRanks}, the radius of comparison of a \CuSgp{} is equal to that of its soft part; 
see \autoref{prp:SsoftRc}. 
We deduce that the radius of comparison of a \ca{} $A$ is equal to that of the soft part of its Cuntz semigroup whenever $A$ is unital and separable, satisfies the Global Glimm Property, and has either stable rank one or strict comparison of positive elements; 
see \autoref{prp:SsoftRcCa}. 
This can be seen as a generalization of \cite[Theorem~6.14]{Phi14arX:LargeSub} to the setting of non-simple \ca{s};
see \autoref{rmk:RecoverPhiliips}.

We also study in \autoref{exa:rcWTRP} the radius of comparison of certain crossed products.

\begin{prp}
\label{prp:SepSoftRank}
Let $S$ be a countably based, left-soft separative, $(2,\omega )$-divisible \CuSgp{} satisfying \axiomO{5}-\axiomO{7}, and let $x\in S$. 
Then $\widehat{x}=\widehat{\sigma (x)}$. 
\end{prp}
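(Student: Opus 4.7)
The plan is to prove both inequalities separately. The inequality $\widehat{\sigma(x)} \leq \widehat{x}$ in $\LAff(F(S))$ is immediate: since $\sigma(x) \leq x$ by \autoref{prp:sigmaSoft}~(1), any functional on $S$ preserves this, so $\widehat{\sigma(x)}(\lambda) = \lambda(\sigma(x)) \leq \lambda(x) = \widehat{x}(\lambda)$ for every $\lambda \in F(S)$.

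For the reverse inequality $\widehat{x} \leq \widehat{\sigma(x)}$, the strategy is to invoke \autoref{prp:SoftRankS}, which produces a strongly soft element that realizes the rank of $x$. The case $x = 0$ is trivial (both sides vanish), so I would assume $x \neq 0$. To apply \autoref{prp:SoftRankS} I first need $L_x$ to be non-empty. By \cite[Proposition~7.7]{ThiVil23arX:Soft}, there exists a strongly soft $x_0 \in S$ with $x_0 \leq x \leq \infty x_0$; since $x \neq 0$, also $x_0 \neq 0$. Because $S_\soft$ is a sub-\CuSgp{} by \autoref{prp:SoftPartAxioms}, we can express $x_0$ as the supremum of a $\ll$-increasing sequence in $S_\soft$, and thus obtain strongly soft elements $u' \ll u \ll x_0 \leq x$, which shows $u' \in L_x$.

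Applying \autoref{prp:SoftRankS} to this $u' \in L_x$, I obtain $w \in S_\soft$ satisfying $u' \ll w \leq x \leq \infty w$ and $\widehat{w} = \widehat{x}$. Since $w$ is a strongly soft element dominated by $x$, the maximality statement in \autoref{prp:sigmaSoft}~(1) gives $w \leq \sigma(x)$, hence $\widehat{w} \leq \widehat{\sigma(x)}$. Combining everything yields
\[
\widehat{x} = \widehat{w} \leq \widehat{\sigma(x)} \leq \widehat{x},
\]
which finishes the proof. There is essentially no obstacle here: this statement is a direct repackaging of \autoref{prp:SoftRankS} through the defining property of $\sigma$, with the only minor point being the non-emptiness of $L_x$ for nonzero $x$, which follows quickly from $(2,\omega)$-divisibility.
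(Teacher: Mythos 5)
Your proof is correct and takes essentially the same route as the paper: apply \autoref{prp:SoftRankS} to obtain $w \in S_\soft$ with $w \leq x$ and $\widehat{w} = \widehat{x}$, then use the maximality of $\sigma(x)$ among strongly soft elements below $x$ from \autoref{prp:sigmaSoft}~(1) to sandwich $\widehat{x} = \widehat{w} \leq \widehat{\sigma(x)} \leq \widehat{x}$. Your preliminary step securing some $u' \in L_x$ via $(2,\omega)$-divisibility is harmless but unnecessary: $0$ is strongly soft and $0 \ll 0 \ll x$ always holds, so $0 \in L_x$ for every $x \in S$, and no case distinction for $x = 0$ is needed.
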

\begin{proof} 
 By \autoref{prp:SoftRankS}, there exists $w \in S_\soft$ such that $w \leq x$ and $\widehat{x}=\widehat{w}$.
Since~$\sigma (x)$ is the largest strongly soft element dominated by~$x$ (\autoref{prp:sigmaSoft}), we get $w \leq \sigma(x)$, and so
\[
\widehat{x}=\widehat{w}\leq \widehat{\sigma (x)}\leq 
\widehat{x},
\]
as required.
\end{proof}

With the homeomorphism from \autoref{prp:FScongFSSoft} at hand, we can now relate the radius of comparison of $S$ and $S_\soft$. 
Let us first recall the definition of the radius of the comparison of \CuSgp{s} from Section~3.3 of \cite{BlaRobTikTomWin12AlgRC}. 

\begin{dfn}
\label{dfn:RadCom}
Given a \CuSgp{} $S$, a full element $e \in S$ and $r>0$, one says that the pair $(S,e)$ satisfies condition (R1) for $r$ if $x,y \in S$ satisfy $x \leq y$ whenever
\[
\lambda (x) + r\lambda (e)\leq \lambda (y)
\]
for all $\lambda \in F(S)$.

The \emph{radius of comparison} of $(S,e)$, denoted by $\rc (S, e)$, is the infimum of the positive elements $r$ such that $(S,e)$ satisfies (R1) for $r$.
\end{dfn}

\begin{rmk}
\label{rmk:RadCom}
In \cite[Deﬁnition~3.3.2]{BlaRobTikTomWin12AlgRC}, for a \ca{} $A$ and a full element $a \in (A \otimes \mathcal{K})_+$, the notation $r_{A, a}$ is used for $\rc (\Cu (A),[a])$. 
Also, it was shown in \cite[Proposition~3.2.3]{BlaRobTikTomWin12AlgRC} that for unital \ca{s} all of whose quotients are stably finite, the radius of comparison $\rc (\Cu (A),[ 1_A])$ coincides with the original notion of radius of comparison $\rc (A)$ as introduced in \cite[Definition~6.1]{Tom06FlatDimGrowth}.
\end{rmk}

\begin{prp}
\label{prp:RadComGenCuMor}
Let $\varphi \colon S \to T$ be a generalized \CuMor{} between \CuSgp{s} that is also an order embedding, and let $e \in S$ be a full element such that~$\varphi(e)$ is full in $T$.
Then, $\rc (S, e) \leq \rc (T, \varphi(e))$.
\end{prp}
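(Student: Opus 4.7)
The plan is to show that any $r > 0$ for which the pair $(T,\varphi(e))$ satisfies condition (R1) must also work for the pair $(S,e)$. Since the radius of comparison is the infimum of such $r$, this will yield $\rc(S,e) \leq \rc(T,\varphi(e))$.

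So fix $r > 0$ such that $(T,\varphi(e))$ satisfies (R1), and let $x,y \in S$ be elements with
\[
\lambda(x) + r\lambda(e) \leq \lambda(y)
\]
for every $\lambda \in F(S)$. Our goal is to deduce $x \leq y$. Since $\varphi$ is an order embedding, it suffices to show that $\varphi(x) \leq \varphi(y)$, and for this, since $(T,\varphi(e))$ satisfies (R1) for~$r$, we only need to verify
\[
\mu(\varphi(x)) + r\mu(\varphi(e)) \leq \mu(\varphi(y))
\]
for every $\mu \in F(T)$.

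The key observation is that for any $\mu \in F(T)$, the composite $\mu \circ \varphi \colon S \to [0,\infty]$ is a functional on $S$: it is a monoid morphism preserving order and suprema of increasing sequences, because $\varphi$ is a generalized \CuMor{} and $\mu \in F(T)$. Applying the hypothesis to the functional $\lambda := \mu \circ \varphi \in F(S)$ gives
\[
\mu(\varphi(x)) + r\mu(\varphi(e))
= (\mu\circ\varphi)(x) + r(\mu\circ\varphi)(e)
\leq (\mu\circ\varphi)(y)
= \mu(\varphi(y)),
\]
as needed. Hence $\varphi(x) \leq \varphi(y)$, and therefore $x \leq y$. This shows that $(S,e)$ also satisfies (R1) for $r$, completing the proof.

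This argument is entirely formal; there is no real obstacle, only the need to notice that pullback of functionals along $\varphi$ lands in $F(S)$ (which uses only that $\varphi$ is a generalized \CuMor{}, not that it preserves the way-below relation) and that the order-embedding property lets us transfer the conclusion back from $T$ to $S$. The assumption that $\varphi(e)$ is full in $T$ is only needed to ensure that $\rc(T,\varphi(e))$ is well-defined.
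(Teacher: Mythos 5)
Your proposal is correct and follows essentially the same route as the paper's proof: pull back each functional $\rho \in F(T)$ to $\rho \circ \varphi \in F(S)$ (using only that $\varphi$ is a generalized \CuMor{}), apply condition (R1) for $(T,\varphi(e))$ to get $\varphi(x) \leq \varphi(y)$, and use the order-embedding property to conclude $x \leq y$. Your closing remark that fullness of $\varphi(e)$ serves only to make $\rc(T,\varphi(e))$ well-defined is also accurate.
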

\begin{proof}
Take $r>0$. We show that $(S,e)$ satisfies condition (R1) for $r$ whenever $(T, \varphi(e))$ does, which readily implies the claimed inequality.

Thus, assume that $(T, \varphi(e))$ satisfies condition (R1) for $r$. 
In order to verify that $(S, e)$ satisfies (R1) for $r$ as well, let $x,y \in S$ satisfy 
\[
\lambda (x)+r\lambda (e) \leq \lambda (y)
\]
for all $\lambda \in F(S)$.

Note that, for every $\rho \in F(T)$, we have that $\rho \circ \varphi \in F(S)$. 
Thus, we get 
\[
\rho (\varphi(x))+r\rho (\varphi(e)) \leq \rho (\varphi(y))
\]
for every $\rho \in F(T)$.
It follows from our assumption that $\varphi (x) \leq \varphi (y)$ and, since $\varphi$ is an order-embedding, we deduce that $x \leq y$, as desired.
\end{proof}

\begin{thm}
\label{prp:SsoftRc}
Let $S$ be a $(2,\omega)$-divisible \CuSgp{} satisfying \axiomO{5}-\axiomO{7}, and let $e\in S$ be a full element. 
Then, there exists $w \in S_\soft$ such that 
\[
\rc (S,e) = \rc (S_\soft , w), \quad
w\leq e \leq \infty w, \andSep 
\widehat{e} = \widehat{w}.
\]

If $S$ is also countably based and left-soft separative, we have 
\[
\rc (S,e) = \rc (S_\soft , \sigma (e)).
\]
\end{thm}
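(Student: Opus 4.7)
The plan is to first prove the existence statement and then deduce the second as an instance of it. For existence, apply Theorem \ref{prp:SoftRankS} with $x = e$ to obtain $w \in S_\soft$ with $w \leq e \leq \infty w$ and $\widehat{w} = \widehat{e}$; it remains to establish $\rc(S, e) = \rc(S_\soft, w)$. The easy direction $\rc(S_\soft, w) \leq \rc(S, e)$ rests on two observations: since $\widehat{w} = \widehat{e}$ as functions on $F(S)$, condition~(R1) of Definition~\ref{dfn:RadCom} for $e$ coincides with that for $w$, so $\rc(S, e) = \rc(S, w)$; and the inclusion $\iota\colon S_\soft \to S$ is a \CuMor{} and an order-embedding that sends $w$ to $w$, which is full in both $S_\soft$ and $S$ (since $e \leq \infty w$ and $e$ is full). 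Proposition \ref{prp:RadComGenCuMor} then yields $\rc(S_\soft, w) \leq \rc(S, w) = \rc(S, e)$.

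The main technical step is the reverse inequality $\rc(S, e) \leq \rc(S_\soft, w)$. Fix $r > \rc(S_\soft, w)$, pick $r'' \in (\rc(S_\soft, w), r)$, set $\delta := r - r''$, and take $x, y \in S$ with $\lambda(x) + r\lambda(e) \leq \lambda(y)$ for every $\lambda \in F(S)$; the goal is $x \leq y$. For each pair $x' \ll x'' \ll x$ I would argue as follows. Using fullness of $e$, find $n_0 \in \NN$ and $e' \ll e$ with $x'' \leq n_0 e'$; pick $N \in \NN$ with $1/(n_0 N) < \delta$, apply $(n_0 N,\omega)$-divisibility to $e$ (obtained by iterating $(2,\omega)$-divisibility, see \cite[Paragraph~2.4]{ThiVil23Glimm}) to get $z \in S$ with $n_0 N z \leq e$ and $e' \leq \infty z$, and soften via \cite[Proposition~7.7]{ThiVil23arX:Soft} to extract a strongly soft $t$ with $t \leq z \leq \infty t$. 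Then $x'' \leq n_0 e' \leq \infty t$ and $\widehat{t} \leq \widehat{e}/(n_0 N)$. Proposition \ref{prp:SigmaDomination} applied to $x' \ll x'' \leq \infty t$ produces a strongly soft $u' \leq x$ with $x' \leq u' + t$, and $u' + t \in S_\soft$ by Proposition \ref{prp:SoftPartAxioms}. Theorem \ref{prp:SoftRankS} applied to $y$ gives $w_y \in S_\soft$ with $w_y \leq y$ and $\widehat{w_y} = \widehat{y}$. A direct computation on each $\lambda \in F(S)$ then shows
\[
\lambda(u'+t) + r''\lambda(w) \leq \lambda(x) + \bigl(r'' + 1/(n_0 N)\bigr)\lambda(e) \leq \lambda(y) = \lambda(w_y),
\]
and via the homeomorphism of Theorem \ref{prp:FScongFSSoft} the same inequality transfers to $F(S_\soft)$. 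Since $r'' > \rc(S_\soft, w)$, condition~(R1) in $(S_\soft, w)$ forces $u' + t \leq w_y$ in $S_\soft$, hence in $S$, and so $x' \leq u' + t \leq w_y \leq y$. Passing to the supremum over $x' \ll x$ yields $x \leq y$.

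For the second statement, under the extra hypotheses the element $\sigma(e) \in S_\soft$ is well-defined by Definition \ref{dfn:sigma} and satisfies $\sigma(e) \leq e$, $e \leq \infty \sigma(e)$ (Proposition \ref{prp:sigmaSoft}(2)), and $\widehat{\sigma(e)} = \widehat{e}$ (Proposition \ref{prp:SepSoftRank}); hence the argument above applies verbatim with $w := \sigma(e)$. The main obstacle throughout is the construction of $t$ in the reverse inequality: it must be large enough that $x'' \leq \infty t$ (so that Proposition \ref{prp:SigmaDomination} can split $x'$ as $u' + t$) yet small enough in rank that $r'' + \widehat{t}/\widehat{e}$ stays strictly below $r$ (so that (R1) in $(S_\soft, w)$ can absorb the extra summand $t$), and balancing these two requirements simultaneously is what forces the nested use of fullness of $e$, $(n,\omega)$-divisibility, and softening.
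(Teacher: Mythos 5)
Your proposal is correct, and its skeleton coincides with the paper's: both produce $w$ (and the soft minorant of $y$) from \autoref{prp:SoftRankS}, both obtain the easy inequality from \autoref{prp:RadComGenCuMor} together with $\widehat{w}=\widehat{e}$, and both prove the hard inequality by adding a strongly soft element $t$ whose rank is at most a small fraction of $\widehat{e}$, transferring the resulting functional inequality through the homeomorphism of \autoref{prp:FScongFSSoft}, and invoking condition~(R1) in $(S_\soft,w)$. The one genuine divergence is how the strongly soft element dominating $x$ is manufactured. The paper applies \cite[Proposition~7.7]{ThiVil23arX:Soft} directly to $e$, obtaining $k\in\NN$ and $t\in S_\soft$ with $kt\leq e\leq\infty t$ and $1\leq k\varepsilon$; since $e$ is full, so is $t$, whence $x\leq\infty t$ and $x+t$ is \emph{itself} strongly soft by \cite[Theorem~4.14(2)]{ThiVil23arX:Soft}, giving directly $x\leq x+t\leq v\leq y$. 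Your $t$, built from fullness of $e$, iterated $(2,\omega)$-divisibility and softening, satisfies only $x''\leq\infty t$ rather than $e\leq\infty t$, so you cannot soften $x+t$ itself; you compensate by approximating $x$ from below and invoking \autoref{prp:SigmaDomination} to produce a strongly soft $u'+t$ with $x'\ll u'+t$ and $u'\leq x$, closing with a supremum over $x'\ll x$. Both routes are valid: the paper's choice of a full $t$ eliminates the interpolation $x'\ll x''\ll x$, the appeal to \autoref{prp:SigmaDomination}, and the final supremum argument, while your version shows the estimate needs only the weaker local domination $x''\leq\infty t$; your bookkeeping $r''+1/(n_0N)<r$ is the exact analogue of the paper's $1\leq k\varepsilon$. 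One micro-step to patch: to get $x''\leq n_0e'$ with $e'\ll e$ you need one further interpolant $x''\ll x'''\ll x$, since fullness first yields $x'''\leq n_0e$ and only then $x''\leq n_0e'$ via \axiomO{4}. The reduction of the final claim to $w:=\sigma(e)$ via \autoref{prp:sigmaSoft} and \autoref{prp:SepSoftRank} matches the paper exactly.
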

\begin{proof}
By \autoref{prp:SoftRankS}, we can pick $w \in S_\soft$ such that 
\[
w \leq e \leq \infty w,
\andSep \widehat{e} = \widehat{w}.
\]
 
Using at the first step that the inclusion map $\iota \colon S_\soft \to S$ is a \CuMor{} and an order-embedding and applying \autoref{prp:RadComGenCuMor}, and using at the last step that $\widehat{e}=\widehat{w}$, we get 
\[
\rc (S_\soft, w)
\leq \rc (S, \iota (w))
= \rc(S, w)
= \rc(S, e).
\]

To prove the converse inequality, let $r>0$ and assume that $(S_\soft, w)$ satisfies condition (R1) for~$r$.
Take $\varepsilon >0$.
We will show that $(S,e)$ satisfies (R1) for $r+\varepsilon$.
  
Now let $x,y\in S$ be such that $\lambda (x)+(r+\varepsilon)\lambda (e)\leq \lambda (y)$ for every $\lambda \in F(S)$ or, equivalently, such that
\[
\widehat{x}+(r+\varepsilon )\widehat{e}\leq \widehat{y}
\]
in $\LAff (F(S))$.

Applying \cite[Proposition~7.7]{ThiVil23arX:Soft}, we find $k\in\NN$ and then $t \in S_\soft$ such that 
\[
kt \leq e \leq \infty t,\andSep 
1\leq k\varepsilon .
\]

Thus, we get 
\[
\widehat{x+t}+r\widehat{e} 
\leq \widehat{x}+k\varepsilon\widehat{t}+r\widehat{e}
\leq \widehat{x}+\varepsilon\widehat{e}+r\widehat{e} 
= \widehat{x} + (\varepsilon +r)\widehat{e}
\leq \widehat{y}.
\]

Note that, since $e$ is full in $S$, so is $t$. 
By \cite[Theorem~4.14(2)]{ThiVil23arX:Soft}, this implies that $x+t$ is strongly soft.

By \autoref{prp:SoftRankS}, there exists $v\in S_\soft$ such that $v\leq y$ and $\widehat{v}=\widehat{y}$. 
One gets
\[
\widehat{x+t}+r\widehat{w}
= \widehat{x+t}+r\widehat{e}
\leq \widehat{y}
= \widehat{v}
\]
or, equivalently, that 
\[
\lambda(x+t) + r\lambda(w)
\leq \lambda(v)
\]
for every $\lambda \in F(S)$.

Using that $F(S)\cong F(S_\soft)$ (\autoref{prp:FScongFSSoft}) and that $(S_\soft, w)$ satisfies condition~(R1) for $r$, it follows that 
\[
x \leq x+t \leq v \leq y.
\]

This shows that, given any $\varepsilon >0$, $(S,e)$ satisfies condition~(R1) for $r+\varepsilon$ whenever $(S_\soft,w)$ satisfies~(R1) for $r$.
Consequently, we have $\rc(S,e) \leq \rc(S_\soft, w)$, as required.

Finally, if $S$ is also countably based and left-soft separative, then we can use $w:=\sigma(x)$ by \autoref{prp:SepSoftRank}.
\end{proof}

\begin{thm}
\label{prp:SsoftRcCa}
Let $A$ be a unital, separable \ca{} with the Global Glimm Property. 
Assume that $A$ has stable rank one. 
Then 
\[
\rc \big( \Cu(A),[1] \big) 
= \rc \big( \Cu(A)_\soft , \sigma ([1]) \big).
\]
\end{thm}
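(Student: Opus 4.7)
The plan is to derive this as an immediate application of \autoref{prp:SsoftRc} (the abstract version for \CuSgp{s}) to $S = \Cu(A)$ and $e = [1]$. The entire task reduces to verifying that this choice meets the hypotheses of the second part of \autoref{prp:SsoftRc}, namely that $\Cu(A)$ is countably based, left-soft separative, $(2,\omega)$-divisible, satisfies \axiomO{5}-\axiomO{7}, and that $[1] \in \Cu(A)$ is full.

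First I would check the general \CuSgp{} axioms: $\Cu(A)$ always satisfies \axiomO{5}-\axiomO{7} by the results recalled in \autoref{pgr:CuSgps}, and it is countably based because $A$ is separable. Next, since $A$ has the Global Glimm Property, \cite[Theorem~3.6]{ThiVil23Glimm} (invoked in \autoref{pgr:GGPNSCa}) gives that $\Cu(A)$ is $(2,\omega)$-divisible. Since $A$ has stable rank one, $\Cu(A)$ is weakly cancellative by \cite[Theorem~4.3]{RorWin10ZRevisited}, and hence left-soft separative by \autoref{prp:WCimpLSSep}. Finally, fullness of $[1]$ in $\Cu(A)$ follows from the fact that $A$ is unital: every $a \in (A \otimes \KK)_+$ is Cuntz subequivalent to some multiple of $1_A$ inside $A \otimes \KK$, and thus $[a] \leq \infty [1]$.

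Having verified all hypotheses, the second conclusion of \autoref{prp:SsoftRc} applies to $e = [1]$ and yields directly
\[
\rc \big( \Cu(A), [1] \big) = \rc \big( \Cu(A)_\soft, \sigma([1]) \big).
\]

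Since every ingredient is already in place and the result is a direct specialization, there is essentially no obstacle: the proof is a one-line invocation of \autoref{prp:SsoftRc} after a short verification of hypotheses. If anything merits attention, it is simply recording that fullness of $[1]$ is preserved by $\sigma$, but this is automatic from $\widehat{\sigma([1])} = \widehat{[1]}$ (\autoref{prp:SepSoftRank}) together with $\sigma([1]) \leq [1] \leq \infty\, \sigma([1])$ (\autoref{prp:sigmaSoft}~(2)), and it is already incorporated into the statement of \autoref{prp:SsoftRc}.
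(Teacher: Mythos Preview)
Your proposal is correct and follows essentially the same approach as the paper: both verify that $\Cu(A)$ is countably based, left-soft separative, $(2,\omega)$-divisible, satisfies \axiomO{5}--\axiomO{7}, and that $[1]$ is full, and then invoke the second part of \autoref{prp:SsoftRc}. The paper compresses this verification by pointing to the proof of \autoref{prp:SigmaCuMorCStar}, whereas you spell out each hypothesis explicitly, but the logical content is identical.
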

\begin{proof}
Proceeding as in the proof of \autoref{prp:SigmaCuMorCStar}, we see that the assumptions on $A$ imply that $\Cu(A)$ is a countably based, left-soft separative, $(2,\omega)$-divisible \CuSgp{} satisfying \axiomO{5}-\axiomO{7}, and that $[1]$ is full.
Hence, the result follows from \autoref{prp:SsoftRc}.
\end{proof}


\begin{cor}
\label{prp:RcCa}
Let $A$ be a unital, separable, nowhere scattered \ca{} of stable rank one. 
Then
\[
\rc(A) =  \rc \big(\Cu(A)_\soft ,\sigma ([ 1 ] ) \big).
\]
\end{cor}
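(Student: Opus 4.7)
The plan is to deduce \autoref{prp:RcCa} as a direct consequence of \autoref{prp:SsoftRcCa} combined with \autoref{rmk:RadCom}. The proof reduces to verifying two hypotheses, neither of which is difficult.

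First, I would check that $A$ satisfies the Global Glimm Property. As recalled in \autoref{pgr:GGPNSCa}, the Global Glimm Problem has been answered affirmatively under the assumption of stable rank one by \cite{AntPerRobThi22CuntzSR1}. Consequently, a nowhere scattered \ca{} of stable rank one automatically satisfies the Global Glimm Property, so in particular $A$ does. This brings $A$ within the scope of \autoref{prp:SsoftRcCa}, yielding
\[
\rc\big(\Cu(A),[1_A]\big) = \rc\big(\Cu(A)_\soft,\sigma([1_A])\big).
\]

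Second, I would identify $\rc(A)$ with $\rc(\Cu(A),[1_A])$. By \autoref{rmk:RadCom}, these two quantities coincide whenever $A$ is unital and all of its quotients are stably finite. Since stable rank one passes to quotients of \ca{s} and implies stable finiteness, every quotient of $A$ is stably finite, and therefore $\rc(A) = \rc(\Cu(A),[1_A])$.

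Chaining the two displayed equalities gives the claim. There is no substantive obstacle here; the result is essentially a packaging of \autoref{prp:SsoftRcCa}, the only nuance being the translation between $\rc(A)$ and $\rc(\Cu(A),[1_A])$ via \autoref{rmk:RadCom}, and the observation that in the stable rank one setting nowhere scatteredness already implies the Global Glimm Property.
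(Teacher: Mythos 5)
Your proposal is correct and matches the paper's proof essentially verbatim: the paper likewise deduces the Global Glimm Property from nowhere scatteredness plus stable rank one (citing \cite[Proposition~7.3]{ThiVil23Glimm}, with \cite{AntPerRobThi22CuntzSR1} as the underlying source, exactly as you do), identifies $\rc(A)$ with $\rc\big(\Cu(A),[1]\big)$ via \cite[Proposition~3.2.3]{BlaRobTikTomWin12AlgRC} as recorded in \autoref{rmk:RadCom}, and concludes by \autoref{prp:SsoftRcCa}. Your added justification that quotients of a stable rank one algebra are stably finite is a correct (and welcome) filling-in of a detail the paper leaves implicit.
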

\begin{proof}
By \cite[Proposition~7.3]{ThiVil23Glimm}, $A$ has the Global Glimm Property:
see also \cite[Section~5]{AntPerRobThi22CuntzSR1}.
Further, by \cite[Proposition~3.2.3]{BlaRobTikTomWin12AlgRC}, we have $\rc(A) = \rc \big( \Cu(A),[1] \big)$, and so the result follows from \autoref{prp:SsoftRcCa}.
\end{proof}

\begin{rmk}
\label{rmk:RecoverPhiliips}
For a large subalgebra $B$ of a simple, unital, stably finite, non-elementary \ca{} $A$, it is shown in \cite[Theorem~6.8]{Phi14arX:LargeSub} that $\Cu(A)_\soft\cong \Cu (B)_\soft$; see also \autoref{pgr:SoftSubCu}. Thus, using \autoref{prp:SsoftRc} at the first and last steps, one gets
\[
\rc (A)=\rc (\Cu (A)_\soft,\sigma_A([1]))=\rc (\Cu (B)_\soft,\sigma_B([1]))=\rc (B),
\]
which recovers \cite[Theorem~6.14]{Phi14arX:LargeSub}.

Note that in this case the existence of $\sigma$ is provided by \cite{Eng14PhD}. 
\end{rmk}

\begin{exa}
\label{exa:rcWTRP}
Let $A$ be a non-elementary, separable, simple, unital \ca{} of stable rank one, real rank zero, and such that the order of projections over $A$ is determined by traces, and let $\alpha$ be a finite group action on $A$ that has the tracial Rokhlin property. 
Then
\[
\rc \big( \Cu(A^\alpha) ,[1] \big)
= \rc \big( \Cu (A)^{\Cu(\alpha)} ,[1] \big).
\]

Indeed, by \cite{Arc11CrProdTrRP}, the crossed product $C^*(G, A, \alpha)$ has stable rank one, and then so does the fixed point algebra $A^\alpha$ by \cite[Lemma~4.3]{AsaGolPhi21RadCompCrProd}.
The question of when stable rank one passes to crossed producst by a finite group action with the (weak) tracial Rokhlin property is discussed after Corollary~5.6 in \cite{AsaGolPhi21RadCompCrProd}.
One can also see that $A^\alpha$ is non-elementary, separable, simple and unital.
Therefore, $\Cu(A^\alpha)$ is a countably based, weakly cancellative (hence, left-soft separative), $(2,\omega )$-divisible \CuSgp{s} satisfying \axiomO{5}-\axiomO{7}.
By \autoref{prp:FixedCuWTRP}, the \CuSgp{} $\Cu(A)^{\Cu(\alpha)}$ has the same properties.
Further, the soft parts of $\Cu(A^\alpha)$ and $\Cu(A)^{\Cu(\alpha)}$ are isomorphic by \autoref{prp:FixedCuWTRP}.

This allows us to apply \autoref{prp:SsoftRc} at the first and last steps, and we get
\begin{align*}
\rc \big( \Cu(A^\alpha) ,[1] \big)
&= \rc \big( \Cu(A^\alpha)_\soft ,\sigma([1]) \big) \\
&= \rc \big( \Cu(A)^{\Cu(\alpha)}_\soft,\sigma([1]) \big) 
= \rc \big( \Cu(A)^{\Cu(\alpha)} ,[1] \big).
\end{align*}

Other examples where our results might be applicable are those obtained in \cite{Asa23RadCompCrossedPro}.
\end{exa}




\providecommand{\etalchar}[1]{$^{#1}$}
\providecommand{\bysame}{\leavevmode\hbox to3em{\hrulefill}\thinspace}
\providecommand{\noopsort}[1]{}
\providecommand{\mr}[1]{\href{http://www.ams.org/mathscinet-getitem?mr=#1}{MR~#1}}
\providecommand{\zbl}[1]{\href{http://www.zentralblatt-math.org/zmath/en/search/?q=an:#1}{Zbl~#1}}
\providecommand{\jfm}[1]{\href{http://www.emis.de/cgi-bin/JFM-item?#1}{JFM~#1}}
\providecommand{\arxiv}[1]{\href{http://www.arxiv.org/abs/#1}{arXiv~#1}}
\providecommand{\doi}[1]{\url{http://dx.doi.org/#1}}
\providecommand{\MR}{\relax\ifhmode\unskip\space\fi MR }
\providecommand{\MRhref}[2]{%
  \href{http://www.ams.org/mathscinet-getitem?mr=#1}{#2}
}
\providecommand{\href}[2]{#2}

\end{document}